\newtheorem{thm}{Theorem}[section]
\newtheorem{conj}[thm]{Conjecture}
\newtheorem{lem}[thm]{Lemma}
\newtheorem{core}[thm]{Corollary}
\numberwithin{equation}{section}
\begin{document}
	
\begin{center}

 {\Large \bf On the hook length biases of the $2$- and $3$-regular partitions}
\end{center}

\begin{center}
{  Wenxia Qu}$^{1}$,  and
  {Wenston J.T. Zang}$^{2}$ \vskip 2mm

   $^{1,2}$School of Mathematics and Statistics, Northwestern Polytechnical University, Xi'an 710072, P.R. China\\[6pt]
   $^{1,2}$ MOE Key Laboratory for Complexity Science in Aerospace, Northwestern Polytechnical University, Xi'an 710072, P.R. China\\[6pt]
$^{1,2}$ Xi'an-Budapest Joint Research Center for Combinatorics, Northwestern Polytechnical University, Xi'an 710072, P.R. China\\[6pt]
   \vskip 2mm

    $^1$quwenxia0710@mail.nwpu.edu.cn, $^2$zang@nwpu.edu.cn
\end{center}

\vskip 6mm \noindent {\bf Abstract.} Let $b_{t,i}(n)$ denote the total number of the $i$ hooks in the $t$-regular partitions of $n$. Singh and Barman (J. Number Theory {  264} (2024), 41--58) raised two conjectures on $b_{t,i}(n)$. The first  conjecture is on the positivity of  $b_{3,2}(n)-b_{3,1}(n)$ for $n\ge 28$. The second conjecture states that when $k\ge 3$, $b_{2,k}(n)\ge b_{2,k+1}(n)$ for all $n$ except for $n= k+1$. In this paper, we confirm the first conjecture. {Moreover, we show that for any odd $k\ge 3$, the second conjecture fails for infinitely many $n$.} {Furthermore, we verify that the second conjecture holds for $k=4$ and $6$.} We also propose a conjecture on the even case $k$, which is a modification of  Singh and Barman's second conjecture.

\noindent {\bf Keywords}: integer partitions, partition inequalities, $t$-regular partitions, hook lengths

\noindent {\bf AMS Classifications}: 05A17, 05A20, 11P81.

\section{Introduction}\label{section-Introduction}

This paper  {focuses} on the  hook length biases in $2$- and $3$-regular partitions.
Recall that an integer partition  of $n$ is a finite sequence of non-increasing positive integers $\lambda=(\lambda_1,\ldots,\lambda_\ell)$ such that $\lambda_1+\cdots+\lambda_{\ell}=n$.  A $t$-regular partition is  a partition  {in which} all parts are not  divisible by $t$. A  $t$-distinct partition is a partition of $n$ where parts can only appear at most $t-1$ times.  We use $b_t(n)$(resp. $d_t(n)$) to denote the number of $t$-regular(resp. $t$-distinct) partitions of $n$. It is well known that $b_t(n)=d_t(n)$. For example, there are seven $3$-regular partitions of $6$ as given below,
\[(5,1),(4,2),(4,1,1),(2,2,2),(2,2,1,1),(2,1,1,1,1),(1,1,1,1,1,1).\]
Hence $b_3(6)=7$. The seven $3$-distinct partitions of $6$ are listed as follows,
\[(6),(5,1),(4,2),(4,1,1),(3,3),(3,2,1),(2,2,1,1).\]
Thus $d_3(6)=b_3(6)=7$. We also recall that the \textit{Young diagram} of a partition $(\lambda_1,\ldots,\lambda_\ell)$ is a left-justified array boxes in which $i$-th row has $\lambda_i$ boxes. For example, the Young diagram of partition $(4,3,3,2,1)$ is shown in Figure \ref{F1}. The hook length of a box in Young diagram is the   number of the boxes directly to its right or directly below it and including itself exactly once. For example, Figure \ref{F2} shows the hook length of each box in the Young diagram of partition $(4,3,3,2,1)$.
\begin{figure}[htbp!]
	\centering
	\begin{subfigure}[b]{0.48\linewidth}
		\begin{center}
		\begin{ytableau}[]
        \ & \ & \ & \ \\
        \ & \ & \ \\
        \ & \ & \ \\
        \ & \ \\
        \
        \end{ytableau}
		\caption{The Young diagram of partition $(4,3,3,2,1)$}
		\label{F1}
		\end{center}
	\end{subfigure}
	\begin{subfigure}[b]{0.48\linewidth}
		\begin{center}
		\begin{ytableau}[]
        8 & 6 & 4 & 1\\
        6 & 4 & 2 \\
        5 & 3 & 1 \\
        3 & 1 \\
        1
        \end{ytableau}
		\caption{The hook lengths of $(4,3,3,2,1)$}
		\label{F2}
		\end{center}
	\end{subfigure}
	\caption{The Young diagram of partition $(4,3,3,2,1)$ and its hook lengths}
	\label{fig:1}
\end{figure}

Hook length plays a crucial role in the theory of representation theory of the symmetric group $S_n$ and the general linear group $GL_n(C)$. Hook length also relates to the theory of symmetric functions as the number of the standard Young tableaux, which is well known as the hook-length formula, see \cite{Frame-Rhobinson-Thrall-1954,Young-1927} {for more details}. The Nekrasov-Okounkov formula (see \cite{Han-Nek-2010, Nek-Oko}) builds the connection between hook length and the theory of modular forms. There are many other studies on the hook length, including the asymptotic, combinatorial and arithmetic properties, especially the $t$-core partitions and the $t$-hook partitions, see \cite{Garvann-Kim, James-Kerber, Littlewood-Modular} for example.

This paper mainly {focuses} on the total number of certain hook length in all $t$-regular partitions. Let $b_{t,i}(n)$ denote the number of hooks of length $i$ in all $t$-regular partitions of $n$ and $d_{t,i}(n)$ denote the number of hooks of length $i$ in all  $t$-distinct partitions of $n$. Ballantine, Burson, Craig, Folsom and Wen \cite{ballantinehook2023}  proved that for any $n\ge 0$, $d_{2,1}(n)\ge b_{2,1}(n)$  and  $b_{2,i}(n)\ge d_{2,i}(n)$ for $i=2,3$. They  also proposed a conjecture that for every $i\ge 2$, there exists an integer $N_{i}$ such that $b_{2,i}(n)\ge d_{2,i}(n)$ for all $n\ge N_{i}$. This conjecture was proved by Craig, Dawsey and Han  \cite{craig-dawsey-han-inequalities}.    Singh and Barman \cite{singh2024hook} studied the biases among $b_{t,i}(n)$ and $d_{t,i}(n)$ for fixed $i$. To be specific, they proved that $b_{t+1, 1}(n)\ge b_{t,1}(n)$ for all $t\ge 2$, $n\ge 0$ and $d_{t+1,i}(n)\ge d_{t,i}(n)$ for any $t,i$, and $n\ge 0$. Moreover, they also showed that $b_{3,2}(n)\ge b_{2,2}(n)$ for all $n>3$ and $b_{3,3}(n)\ge b_{2,3}(n)$ for all $n\ge 0$. At the end of their paper, they conjectured that $b_{t+1,2}(n) \ge  b_{t,2}(n)$ for $t \ge 3$ and for all $n \ge 0$. The case $t=3$ of this conjecture was confirmed by Mahanta \cite{mahanta2024singh}.

{The hook length biases on the $2$- and $3$-regular partitions} were first studied by Singh and Barman in \cite{singh-barman-hook}. They proved that $b_{2,2}(n)\ge b_{2,1}(n)$ for all $n > 4$ and $b_{2,2}(n)\ge b_{2,3}(n)$ for all $n\ge0$. Moreover, they also proposed the following conjecture on the biases of $b_{3,2}(n)$ and $b_{3,1}(n)$.
\begin{conj}[\cite{singh-barman-hook}]\label{conj-1}
	For all $n\ge 28$, $b_{3,2}(n)\ge b_{3,1}(n)$.
\end{conj}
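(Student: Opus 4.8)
The plan is to turn the statement into a positivity assertion about the coefficients of an explicit infinite product, and then reduce that to an elementary recursion together with a finite check. Write $B_3(q)=\prod_{k\ge 1,\,3\nmid k}(1-q^k)^{-1}=\sum_{n\ge 0}b_3(n)q^n$. The first ingredient is purely combinatorial: in the Young diagram of a partition, the cells of hook length $1$ are exactly the removable corners, one for each distinct part value, while the cells of hook length $2$ come in two flavours — one for each part value occurring with multiplicity at least $2$, and one for each ``gap'' of size $\ge 2$ between two consecutive distinct part values, with the convention that the smallest part is followed by $0$. Summing each of these statistics over all $3$-regular partitions of $n$ by the usual generating-function bookkeeping (a part value $j$ with $3\nmid j$ of multiplicity $\ge 2$ contributes $q^{2j}B_3(q)$; a smallest part equal to $1$ contributes $qB_3(q)$; a consecutive pair $(v,v-1)$ of parts forces $v\equiv 2\pmod 3$ and contributes $q^{2v-1}B_3(q)$), one arrives at
\[
\sum_{n\ge 0}\bigl(b_{3,2}(n)-b_{3,1}(n)\bigr)q^n=\frac{q^2+q^4-q-q^3+q^7}{1-q^6}\,B_3(q).
\]

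Writing $\Delta(n):=b_{3,2}(n)-b_{3,1}(n)$, multiplying through by $1-q^6$ and comparing coefficients gives, with $b_3(m):=0$ for $m<0$,
\[
\Delta(n)=\Delta(n-6)+b_3(n-7)-\bigl(b_3(n-1)-b_3(n-2)\bigr)-\bigl(b_3(n-3)-b_3(n-4)\bigr).
\]
Setting $P(m):=b_3(m)-b_3(m-1)$ (the number of $3$-regular partitions of $m$ with no part equal to $1$, for $m\ge1$), the increment is $g(n):=b_3(n-7)-P(n-1)-P(n-3)$, so $\Delta(n)=\Delta(n-6)+g(n)$. It therefore suffices to prove (i) $g(n)\ge 0$ for all $n$ beyond some explicit bound, and (ii) $\Delta(n)\ge 0$ for $28\le n\le 33$; then, stepping down by $6$, induction yields $\Delta(n)\ge 0$ for every $n\ge 28$. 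Part (ii) is a finite computation — the needed values of $b_3$ are produced instantly from Euler's pentagonal-type recursion coming from $(q;q)_\infty\sum_n b_3(n)q^n=(q^3;q^3)_\infty$ — and it also explains the threshold: one finds $\Delta(23)=\Delta(25)<0$, while $\Delta(n)\ge 0$ for $n\in\{24,26,27\}$ and for all $n\ge 28$.

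The main obstacle is (i), i.e. the inequality $b_3(n-7)+b_3(n-2)+b_3(n-4)\ge b_3(n-1)+b_3(n-3)$. Heuristically this holds with an enormous margin once $n$ is moderately large: since $b_3(n)$ grows like $\exp\!\bigl(\tfrac{2\pi}{3}\sqrt n\bigr)$ up to a power of $n$, the forward differences $P(m)$ are smaller than $b_3(m)$ by a factor of order $1/\sqrt m$, whereas $b_3(n-7)$ is smaller than $b_3(n-1)$ only by a bounded factor tending to $1$. To make this rigorous I would establish effective estimates of the shape $P(m)\le \tfrac14\,b_3(m-1)$ and $b_3(m-1)\le 2\,b_3(m-6)$ valid for all $m\ge M$ with $M$ explicit; together these give $P(n-1)+P(n-3)\le\tfrac14\bigl(b_3(n-2)+b_3(n-4)\bigr)\le\tfrac12\,b_3(n-2)\le b_3(n-7)$ for $n>M$, hence $g(n)\ge 0$ there. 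Such estimates can be obtained either by invoking the Rademacher-type exact formula (or Meinardus' asymptotic with an explicit error term) for the eta-quotient $(q^3;q^3)_\infty/(q;q)_\infty$, or more elementarily by bootstrapping the pentagonal-type recursion for $b_3$ to bound the consecutive ratios $b_3(m)/b_3(m-1)$ from above. One then completes the proof by checking $g(n)\ge 0$ directly for the finitely many $n$ with $24\le n\le M$; in fact the computation shows $g(n)\ge 0$ already for all $n\ge 24$, so the only input needed for the induction is $g(n)\ge 0$ for large $n$ plus the six base values $\Delta(28),\dots,\Delta(33)\ge 0$.
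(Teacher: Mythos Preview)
Your algebraic reduction is correct and the route is genuinely different from the paper's. The identity $\sum_n\Delta(n)q^n=\dfrac{-q+q^2-q^3+q^4+q^7}{1-q^6}\,B_3(q)$ agrees with the paper's intermediate form $B_3(q)\bigl(-q+\tfrac{q^2}{1-q^2}-\tfrac{q^3}{1-q^3}\bigr)$, and the recursion $\Delta(n)=\Delta(n-6)+g(n)$ with $g(n)=b_3(n-7)-P(n-1)-P(n-3)$ follows cleanly. The paper, by contrast, never returns to $b_3(n)$ after this stage: it rewrites the difference as $-q(1+q^2+q^4)(1+q^3+q^6)$ times a difference of two infinite products, interprets each product as counting certain partition triplets $(\alpha,\beta,\gamma)$, splits both sides into seven matched pieces, builds six explicit bijections, and handles the residual seventh piece by an elementary quadratic-versus-linear count on solutions of $2a+4b+7c=n-19$. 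Everything there is self-contained and uses no asymptotics for $b_3$.

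The real gap in your proposal is exactly the step you flag yourself: the effective inequalities $P(m)\le\tfrac14\,b_3(m-1)$ and $b_3(m-1)\le 2\,b_3(m-6)$ for all $m\ge M$ with $M$ explicit. Both hold asymptotically with room to spare, but neither is proved, and your two suggested routes carry very different weight. Quoting a Rademacher-type exact formula or Meinardus with explicit remainder for $(q^3;q^3)_\infty/(q;q)_\infty$ would work in principle, but it imports analytic machinery far heavier than anything the paper needs, and you would still have to pin down $M$ and do the finite check up to it. The alternative you call ``bootstrapping the pentagonal-type recursion to bound consecutive ratios'' is not yet an argument: that recursion expresses $b_3(n)$ as an \emph{alternating} sum of earlier values, and extracting uniform upper bounds on $b_3(m)/b_3(m-1)$ from such a relation, without already having ratio bounds to feed back in, is notoriously circular. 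Until one of these is actually carried out and $M$ is made concrete, the outline is plausible but not a proof; the paper's bijective decomposition, while longer to set up, sidesteps this analytic bottleneck entirely and reduces the only estimate needed to a one-line polynomial inequality valid for $n\ge 152$.
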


The first main result of this paper is to give a confirmation of the above conjecture.

\begin{thm}\label{thm-main1}
	Conjecture \ref{conj-1} is true.
	
\end{thm}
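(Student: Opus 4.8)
The plan is to reduce Conjecture~\ref{conj-1} to an explicit inequality among the values $b_3(n)$, and then to settle that inequality by a telescoping argument together with a finite verification. A cell of a Young diagram has hook length $1$ exactly when it is an outer corner, so $b_{3,1}(n)$ is the total number of distinct part sizes over all $3$‑regular partitions of $n$; a cell has hook length $2$ exactly when its arm–leg pair is $(1,0)$ or $(0,1)$, and a short case analysis shows that the cells of the first type are indexed by the part sizes $a\ge 2$ of $\lambda$ for which $a-1$ is not a part of $\lambda$, and those of the second type by the part sizes occurring at least twice. Each of the conditions ``$a$ is a part'', ``$a$ occurs exactly once'', ``$a-1$ is not a part'' contributes an explicit rational factor against $B_3(q):=(q^3;q^3)_\infty/(q;q)_\infty=\prod_{3\nmid k}(1-q^k)^{-1}$, and after summing over $a$ and simplifying I expect to obtain
\[
\sum_{n\ge 0}\bigl(b_{3,2}(n)-b_{3,1}(n)\bigr)q^n\;=\;B_3(q)\left(\frac{q^2-q^3+q^4}{1-q^6}-q\right),
\]
equivalently, since $\frac{q^2-q^3+q^4}{1-q^6}=\sum_{k\ge 0}\bigl(q^{6k+2}-q^{6k+3}+q^{6k+4}\bigr)$,
\[
D(n):=b_{3,2}(n)-b_{3,1}(n)=-b_3(n-1)+\sum_{k\ge 0}\bigl(b_3(n-6k-2)-b_3(n-6k-3)+b_3(n-6k-4)\bigr),
\]
with the convention $b_3(m)=0$ for $m<0$. (As a check, this gives $D(n)=-1,0,-2,0,-3$ for $n=1,\dots,5$, and $D(27)=-2$, $D(28)=40$, matching the experimental threshold.)

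Thus Conjecture~\ref{conj-1} asserts exactly that $\sum_{k\ge 0}\bigl(b_3(n-6k-2)-b_3(n-6k-3)+b_3(n-6k-4)\bigr)\ge b_3(n-1)$ for all $n\ge 28$. Since $b_3$ is non‑decreasing, each summand is at least $b_3(n-6k-4)\ge 0$, so the left side is of size $\asymp\sqrt{n}\,b_3(n)$ by the Hardy–Ramanujan–type estimate $b_3(n)\asymp n^{-3/4}\exp\bigl(\tfrac{2\pi}{3}\sqrt{n}\,\bigr)$ (whose consecutive ratios tend to $1$), and this eventually dominates $b_3(n-1)$ with enormous room. To reach the sharp constant $28$ I would exploit the self‑improving shape of the formula: a direct computation from the displayed identity gives
\[
D(n)-D(n-6)=\bigl(b_3(n-2)-b_3(n-3)+b_3(n-4)\bigr)-\bigl(b_3(n-1)-b_3(n-7)\bigr)=b_3(n-7)-g(n-1)-g(n-3),
\]
where $g(m):=b_3(m)-b_3(m-1)\ge 0$. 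Hence it suffices to (i) verify $D(n)>0$ directly for the six base values $n\in\{28,29,30,31,32,33\}$, and (ii) prove $b_3(n-7)\ge g(n-1)+g(n-3)$ for every $n\ge 34$; together these give $D(n)\ge D(n-6)\ge\cdots\ge D(n_0)>0$ for the base value $n_0\in\{28,\dots,33\}$ with $n_0\equiv n\pmod 6$.

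Step (ii) says that one value of $b_3$ must dominate two ``derivative'' values $g(n-1),g(n-3)$ of relative size $\sim\tfrac{\pi}{3\sqrt n}$ sitting a bounded distance away; this is clearly true for large $n$ but has to be made effective. I would handle it by combining an explicit upper bound for the ratio $b_3(m+1)/b_3(m)$ (or the leading term of the Rademacher expansion of $b_3$), valid for $n$ beyond an explicit $N_0$, with a direct computation of $b_3$ on the finite range $34\le n<N_0$. The main obstacle is precisely this quantitative near‑boundary step: the threshold $n=28$ is genuinely sharp ($D(27)<0$), and the clean inequalities one is tempted to invoke — such as $\sum_{k\ge 0}b_3(n-6k-4)\ge b_3(n-1)$ or $b_3(m)+b_3(m+3)\ge b_3(m+6)$ — already fail around $n=28$, so the delicate range cannot be finessed away and must be covered either by a sufficiently sharp effective estimate on $b_3$ or by an explicit finite check; everything else is routine bookkeeping.
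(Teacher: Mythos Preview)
Your generating function identity
\[
\sum_{n\ge 0}(b_{3,2}(n)-b_{3,1}(n))q^n=B_3(q)\Bigl(\frac{q^2-q^3+q^4}{1-q^6}-q\Bigr)
\]
is correct (it is exactly the expression $B_3(q)\bigl(-q+\frac{q^2}{1-q^2}-\frac{q^3}{1-q^3}\bigr)$ that the paper also reaches in its Lemma~\ref{thm1-lem-1}), your telescoping computation $D(n)-D(n-6)=b_3(n-7)-g(n-1)-g(n-3)$ is right, and the overall strategy is sound. This is, however, a genuinely different route from the paper. The paper never works with the values $b_3(m)$ at all: it rewrites the generating function as $-q(1+q^2+q^4)(1+q^3+q^6)$ times a difference of two infinite products, interprets each product as enumerating a set of partition triplets, splits each set into seven pieces, constructs six explicit bijections between matching pieces, bounds the seventh pair by an elementary lattice-point count (valid for $n\ge 152$), and finally checks $28\le n\le 162$ directly. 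The paper's argument is completely self-contained and uses no asymptotics; your argument is shorter and structurally clearer but trades the bijective work for an analytic estimate on $b_3$.

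That analytic estimate is where your proposal is not yet a proof. Step~(ii), the inequality $b_3(n-7)\ge g(n-1)+g(n-3)$ for every $n\ge 34$, is the entire content of the theorem in your reformulation, and you have only described how you \emph{would} attack it (effective Rademacher-type bounds on $b_3(m+1)/b_3(m)$ together with a finite check up to an unspecified $N_0$). This is feasible---indeed $g(m)$ counts $3$-regular partitions of $m$ with smallest part at least $2$, so $g(m)/b_3(m)\sim \pi/(3\sqrt m)\to 0$---but until you actually produce an explicit $N_0$ with a proved error bound, or give a direct injective argument for (ii), the proof is incomplete at precisely the point you yourself identify as the ``main obstacle''. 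Everything else in the proposal checks out.
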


Singh and Barman also raised the following conjecture on $b_{2,k}(n)$ and $b_{2,k+1}(n)$.

\begin{conj}[\cite{singh-barman-hook}]\label{conj-2}
	For every integer $k\ge 3$, $b_{2,k}(n)\ge b_{2,k+1}(n)$ for all $n\ge 0$ and $n\ne k+1$.
\end{conj}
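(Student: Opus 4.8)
The plan is to convert the inequality into a positivity statement for the coefficients of an explicit generating function and then to analyze that positivity, which I expect to be the crux and, moreover, to hinge on the parity of $k$. \textbf{Step 1 (Generating functions via the boundary word).} A $2$-regular partition is exactly a partition into odd parts, so set $G(q):=\sum_{n\ge0}b_2(n)q^n=\prod_{m\ge1}(1-q^{2m-1})^{-1}$. Encode a partition $\lambda$ into odd parts, with distinct part sizes $p_1>\cdots>p_s$ and multiplicities $n_1,\dots,n_s\ge1$, by its boundary word read from the bottom-left corner: a $0/1$ string whose maximal $0$-runs have lengths $p_s,\,p_{s-1}-p_s,\,\dots,\,p_1-p_2$ and whose maximal $1$-runs have lengths $n_s,\dots,n_1$. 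By the classical fact that a cell of hook length $i$ corresponds to a horizontal and a vertical boundary step at distance $i$, the quantity $b_{2,i}(n)$ counts, over all such $\lambda\vdash n$, the pairs $(x,x+i)$ with a $0$ at position $x$ and a $1$ at position $x+i$. Summing $q^{|\lambda|}$ under this marking yields $B_i(q):=\sum_n b_{2,i}(n)q^n=R_i(q)\,G(q)$ for an explicit rational function $R_i(q)$; as a warm-up the hook-length-$1$ case (removable corners, i.e.\ distinct part sizes) gives $B_1(q)=\frac{q}{1-q^2}\,G(q)$. The first task is to compute $R_k(q)$ and $R_{k+1}(q)$ in closed form by summing the offset-$i$ contributions of each ordered pair consisting of a $0$-run and a later $1$-run.

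\textbf{Step 2 (Difference series, exceptional value, and the finite reduction).} Form $B_k(q)-B_{k+1}(q)=\bigl(R_k(q)-R_{k+1}(q)\bigr)G(q)$. Since $G(q)$ has non-negative coefficients, writing $R_k-R_{k+1}=\sum_j c_j q^j$ gives $[q^n]\bigl(B_k-B_{k+1}\bigr)=\sum_j c_j\,b_2(n-j)$, so the inequality becomes the non-negativity of this sum. I would first trace the single documented exception at $n=k+1$ to the small partitions of $k+1$, confirming that this value must be excluded, and then try to prove non-negativity for all other $n$ as follows: if $R_k-R_{k+1}$ had only finitely many negative coefficients $c_j$, the inequality for all large $n$ would follow by bounding the finitely many negative terms $c_j\,b_2(n-j)$ against the positive ones using the monotone growth of $b_2$, after which the remaining small-$n$ cases reduce to a finite computation. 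This is exactly how I would hope to settle $k=4$ and $k=6$.

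\textbf{Step 3 (The main obstacle: parity of $k$).} The crux is whether $R_k-R_{k+1}$ really has only finitely many negative coefficients, and here I expect the parity structure exposed in Step 1 to be decisive. The bottom $0$-run has odd length (the smallest part) while every other $0$-run is even, so the offsets $k$ and $k+1$, being of opposite parity, interact asymmetrically with the run lengths. Tracking this asymmetry, I anticipate that for odd $k$ the series $R_k-R_{k+1}$ carries negative coefficients concentrated in a fixed residue class that are not washed out upon multiplication by $G(q)$, forcing $b_{2,k}(n)<b_{2,k+1}(n)$ for infinitely many $n$; this would show the final statement cannot hold as worded for odd $k$. For even $k$ the same bookkeeping should instead produce genuine eventual non-negativity. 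I therefore expect the honest resolution of the statement to be a proof for the even cases $k=4,6$ via Steps 1--2 together with a refutation for every odd $k\ge3$ obtained by making the parity obstruction explicit along an infinite arithmetic progression of $n$, which simultaneously indicates how the even-$k$ claim ought to be corrected.
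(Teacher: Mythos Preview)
Your overall diagnosis is right, and it matches the paper's findings: the conjecture as stated is false for every odd $k\ge3$ while it does hold for $k=4$ and $k=6$, and the reduction to the sign of the coefficients of $(R_k-R_{k+1})\,(-q;q)_\infty$ is exactly the framework the paper uses (it quotes the Craig--Dawsey--Han closed form for $R_i$ rather than rederiving it from boundary words, but your Step~1 is a legitimate alternative route to the same rational functions).

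The genuine gaps are in the specific mechanisms you posit in Steps~2 and~3. For $k=4,6$ the paper does \emph{not} argue that $R_k-R_{k+1}$ has only finitely many negative Taylor coefficients; that hypothesis is not what drives the proof. Instead it rewrites $B_k-B_{k+1}$ so that the essential piece is $\dfrac{(1-q)^2}{(q;q^2)_\infty}$ multiplied by explicit polynomials with non-negative coefficients and by benign factors such as $(1-q^8)^{-1}(1-q^{10})^{-1}$, and then invokes the known fact that $[q^n]\,\dfrac{(1-q)^2}{(q;q^2)_\infty}\ge0$ for all $n\ge5$. That positivity lemma is the pivot of the even-$k$ argument and is absent from your outline. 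For odd $k$ the refutation is likewise not a residue-class or run-parity obstruction along an arithmetic progression. One shows that $(1-q)(R_{2t+1}-R_{2t+2})=A_t(q)/B_t(q)$ with $B_t(q)=\prod_{i=0}^{2t+1}(1+q+\cdots+q^{2i+1})$ having positive coefficients, and then computes, via an Eulerian-integral identity, that $A_t(1)/B_t(1)=-\dfrac{1}{2(2t+1)(2t+2)}<0$. Since the partial sums $S(n)=\sum_{m\le n}Q(m)$ of the distinct-partition function satisfy $S(n-j)\sim S(n)$, multiplying by $A_t$ forces the coefficients of $B_t(q)\sum_n\bigl(b_{2,2t+1}(n)-b_{2,2t+2}(n)\bigr)q^n$ to be negative for \emph{all} sufficiently large $n$, not merely along a progression. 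In other words, the obstruction is a ``total mass at $q=1$'' plus asymptotics argument; your Step~3 would need this idea, or something equivalent, to become a proof.
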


They checked this conjecture for $1\le n,k\le 10$. However, this conjecture is not valid for odd $k$. For example, when $k=3$ and $n=82$, we have $b_{2,3}(82)=515393$ and $b_{2,4}(82)=515487$, which is a {counterexample} to Conjecture \ref{conj-2}. In fact, we find that for any odd $k\ge 3$, Conjecture \ref{conj-2}
 does not hold.

\begin{thm}\label{thm-mainodd}
	For any $k\ge 3$  and $k$ is odd ,	there exists infinitely many $n$ such that
	\begin{equation}
		b_{2,k}(n)<b_{2,k+1}(n).
	\end{equation}
\end{thm}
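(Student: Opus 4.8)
The plan is to pass to the generating function $B_k(q):=\sum_{n\ge 0}b_{2,k}(n)\,q^n$ and compare $B_{k+1}$ with $B_k$ near $q=1$. First I would establish the rational shape of $B_k$. Given a $2$-regular (equivalently, odd-part) partition $\lambda$, classify each cell of hook length $k$ by its arm length $a\in\{0,1,\dots,k-1\}$ (so the leg length is $l=k-1-a$) and by the value $v$ of the part occupying that cell's row. Writing $m_v$ for the multiplicity of $v$ in $\lambda$ and $A_v$ for the number of parts of $\lambda$ lying in $[v-a,\,v-1]$, a short check (the leg condition fixes the row uniquely) shows that row $v$ carries such a cell precisely when $v\ge a+1$, $A_v\le l$, and $m_v\ge l+1-A_v$. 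Expressing these conditions through the independent geometric generating factors for the multiplicities and summing the resulting geometric series over $v$ yields
\[
B_k(q)=\frac{F_k(q)}{(q;q^2)_\infty},\qquad (q;q^2)_\infty=\prod_{j\ge 1}\bigl(1-q^{2j-1}\bigr),
\]
where $F_k$ is a finite integer combination of rational functions $q^{c}/(1-q^{2j})$ ($c\ge 0,\ j\ge 1$); in particular $F_k$ is rational with at worst a simple pole at $q=1$. (A formula of this type, for these parameters, is implicit in \cite{singh-barman-hook}.)

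The technical core is the residue of $F_k$ at $q=1$. Put $c_k:=\lim_{q\to 1^-}(1-q)F_k(q)$; heuristically $b_{2,k}(n)$ is asymptotic to $c_k$ times a quantity independent of $k$ (it behaves like $c_k\sum_{m\le n}b_2(m)$), so the sign of $c_{k+1}-c_k$ should decide the bias — and the final step will make this rigorous with no asymptotics at all. Fixing $a$ and setting $t=\lfloor a/2\rfloor$, the interval $[v-a,v-1]$ contains exactly the odd numbers $v-2i$, $1\le i\le t$; expanding $\prod_{i=1}^{t}(1-q^{v-2i})$, separating off the multiplicity of $v$, and summing the geometric series over $v$ reduces the arm-$a$ contribution to $c_k$ to a finite sum which I would evaluate using the hockey-stick identity $\sum_{A=0}^{l}\binom{A+t-1}{t-1}=\binom{l+t}{t}$ together with the Beta-integral evaluation $\sum_{s=0}^{t}(-1)^{s}\binom{t}{s}\frac{1}{s+l+1}=\frac{l!\,t!}{(l+t+1)!}$. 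The arm-$a$ block then contributes exactly $\tfrac{1}{2(l+t+1)}=\tfrac{1}{2(k-\lceil a/2\rceil)}$, so that
\[
c_k=\sum_{a=0}^{k-1}\frac{1}{2\bigl(k-\lceil a/2\rceil\bigr)} ,
\]
and a short telescoping (the choices $a=2j-1,2j$ produce the same denominator $k-j$) gives $c_{k+1}-c_k=\dfrac{(-1)^{k-1}}{2k(k+1)}$; in particular $c_{k+1}-c_k=\dfrac{1}{2k(k+1)}>0$ for every odd $k\ge 3$. I expect this exact residue identity to be the real obstacle: the arm-$a$ bookkeeping must be handled uniformly in $a$ (the parity of $a$ changes which parts land in $[v-a,v-1]$), and the alternating sums of rational functions must be carried cleanly to the closed form. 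Everything downstream is soft.

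Finally I would convert $c_{k+1}>c_k$ into Theorem \ref{thm-mainodd} by a growth argument on the real interval $(0,1)$. Let $G_k:=F_{k+1}-F_k$, so $B_{k+1}(q)-B_k(q)=G_k(q)/(q;q^2)_\infty$. Since $G_k$ is rational with $(1-q)G_k(q)\to c_{k+1}-c_k>0$ as $q\to 1^-$, we have $G_k(q)\to+\infty$; because $(q;q^2)_\infty^{-1}$ is positive on $(0,1)$ and tends to $+\infty$, it follows that $B_{k+1}(q)-B_k(q)\to+\infty$ as $q\to 1^-$. On the other hand, if $b_{2,k}(n)\ge b_{2,k+1}(n)$ held for all $n\ge N$, then for every $q\in(0,1)$ each term with $n\ge N$ in $\sum_{n}\bigl(b_{2,k+1}(n)-b_{2,k}(n)\bigr)q^{n}$ is $\le 0$, hence $B_{k+1}(q)-B_k(q)\le\sum_{n<N}\bigl|b_{2,k+1}(n)-b_{2,k}(n)\bigr|<\infty$ uniformly in $q$, contradicting the divergence just shown. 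Therefore $b_{2,k}(n)<b_{2,k+1}(n)$ for infinitely many $n$. This soft argument (no error term is controlled) is exactly why the conclusion reads "for infinitely many $n$" rather than "for all large $n$"; and the opposite sign of $c_{k+1}-c_k$ for even $k$ is what makes the proposed modified conjecture plausible there.
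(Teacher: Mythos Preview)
Your proposal is correct, and the central calculation---the residue of $F_{k+1}-F_k$ at $q=1$---coincides with the paper's Lemma~\ref{lem-main3-2}: both evaluations use the Beta integral $\sum_{s}(-1)^s\binom{t}{s}\frac{1}{s+l+1}=\frac{l!\,t!}{(l+t+1)!}$ to reach $c_{k+1}-c_k=\frac{(-1)^{k-1}}{2k(k+1)}$ (the paper states it as $A_t(1)/B_t(1)=-\frac{1}{2(2t+1)(2t+2)}$, which is the same thing). So at the computational core the arguments match.

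Where you differ is in the wrapping. First, you rebuild the rational shape of $F_k$ directly by the arm/leg bookkeeping, whereas the paper simply invokes the Craig--Dawsey--Han formula (Theorem~\ref{thm-ahq}); your route is self-contained, but note that the reference you cite for this (``implicit in \cite{singh-barman-hook}'') is not quite the right one---Theorem~\ref{thm-ahq} is from \cite{craig-dawsey-han-inequalities}. Second, and more interestingly, your endgame is genuinely simpler than the paper's. The paper clears denominators by the positive-coefficient polynomial $B_t(q)$, invokes the asymptotic $Q(n+k)\sim Q(n)$ to get $S(n-i)\sim S(n)$, and deduces that the coefficients of $B_t(q)\sum_n(b_{2,k}-b_{2,k+1})(n)q^n$ are eventually negative. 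You instead observe that $G_k(q)/(q;q^2)_\infty\to+\infty$ as $q\to 1^-$ and note that if $b_{2,k}(n)\ge b_{2,k+1}(n)$ for all $n\ge N$ then the same power series is bounded above on $(0,1)$ by $\sum_{n<N}|b_{2,k+1}(n)-b_{2,k}(n)|$. This Abelian contradiction avoids any asymptotics and any auxiliary multiplier; it is a clean improvement. What the paper's approach buys in return is a slightly stronger intermediate statement (eventual negativity of the coefficients of a polynomial multiple of the difference series), though this is not used elsewhere.
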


Moreover,
we prove that Conjecture \ref{conj-2} is true when $k=4$ and $6$. This provides supporting evidence that Conjecture \ref{conj-2} maybe hold for even $k$.

\begin{thm}\label{thm-main21}
For $n\ge 0$ and $n\ne 5$ , we have
\begin{equation}\label{eq-mainb24ngeb25n}
	b_{2,4}(n)\ge b_{2,5}(n).
\end{equation}
\end{thm}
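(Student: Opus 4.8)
The plan is to derive exact generating functions for $b_{2,4}(n)$ and $b_{2,5}(n)$, form their difference, and show it has nonnegative coefficients for every $n\ne5$. Throughout, a $2$-regular partition is a partition into odd parts; write $B(q)=\prod_{j\ge1}(1-q^{2j-1})^{-1}=\sum_{n\ge0}b_2(n)q^n$, and for $m\ge1$ set $B_{\ge m}(q)=\prod_{j\ge1,\,2j-1\ge m}(1-q^{2j-1})^{-1}$ and $B_{\le m}(q)=\prod_{j\ge1,\,2j-1\le m}(1-q^{2j-1})^{-1}$, so that $B_{\ge m}(q)B_{\le m'}(q)=B(q)\prod_{m'<j<m,\,j\ \mathrm{odd}}(1-q^j)$ whenever $m'<m$. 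The starting point is that a cell of hook length $i$ in a $2$-regular partition $\lambda$ lies in some row $r$, has arm length $a$ and leg length $l$ with $a+l=i-1$, is the cell in column $v:=\lambda_r-a$, and the hook condition reads $\lambda_{r+l}\ge v>\lambda_{r+l+1}$. Cutting $\lambda$ into the rows above $r$ (a $2$-regular partition into parts $\ge\lambda_r=v+a$), the block consisting of rows $r,\dots,r+l$ (with $\lambda_r=v+a$ and $\lambda_{r+1},\dots,\lambda_{r+l}$ a weakly decreasing run of $l$ odd parts in $[v,v+a]$), and the rows below row $r+l$ (a $2$-regular partition into parts $\le v-1$), and summing over all $\lambda$, one obtains
\[
\sum_{n\ge0}b_{2,i}(n)q^n=\sum_{\substack{a,l\ge0\\ a+l=i-1}}\ \sum_{\substack{v\ge1\\ v+a\ \mathrm{odd}}}q^{v+a}\,h_l\bigl(\{q^m:\,m\ \mathrm{odd},\ v\le m\le v+a\}\bigr)\,B_{\ge v+a}(q)\,B_{\le v-1}(q),
\]
where $h_l$ denotes the complete homogeneous symmetric polynomial (here always a sum of at most a few monomials). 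By the collapsing identity above, each inner factor becomes $B(q)$ times a polynomial, so $\sum_n b_{2,i}(n)q^n=B(q)R_i(q)$ for an explicit rational function $R_i(q)$; as a check, this reproduces $\sum_n b_{2,1}(n)q^n=\frac{q}{1-q^2}B(q)$.

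Specializing to $i=4$ and $i=5$, evaluating the trivial symmetric-polynomial factors, and simplifying, one obtains closed forms for $R_4(q)$ and $R_5(q)$, and hence
\[
\Delta(q):=\sum_{n\ge0}\bigl(b_{2,4}(n)-b_{2,5}(n)\bigr)q^n=B(q)\bigl(R_4(q)-R_5(q)\bigr),
\]
where a computation collapses $R_4-R_5$ to a compact rational function, for instance of the shape
\[
R_4(q)-R_5(q)=q^4+2q^6+\frac{q^{10}}{1-q^4}+\frac{q^4-2q^5+q^9}{1-q^2}-\frac{q^{12}}{1-q^8}-\frac{q^5}{1-q^{10}}.
\]
Its negative contributions are only $-\frac{2q^5}{1-q^2}$, $-\frac{q^{12}}{1-q^8}$, and $-\frac{q^5}{1-q^{10}}$, and one computes $[q^5]\Delta(q)=-1$, which will supply the single exceptional value $n=5$.

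It then remains to prove $[q^n]\Delta(q)\ge0$ for all $n\ne5$, equivalently that $\Delta(q)+q^5$ has nonnegative coefficients. Write $R_4-R_5=U(q)-W(q)$ where $U$ collects the nonnegative pieces and $W=\frac{2q^5}{1-q^2}+\frac{q^{12}}{1-q^8}+\frac{q^5}{1-q^{10}}$ the negative ones; the goal is the coefficientwise inequality $B(q)W(q)\le B(q)U(q)+q^5$. Since $B(q)$, and hence each $B(q)/(1-q^{2m})$, has positive coefficients, this is handled by a finite numerical verification for $n$ below an explicit bound together with a tail estimate for large $n$ — an Abel-summation / monotonicity argument using that, eventually, $[q^m](R_4-R_5)$ is nonnegative at even $m$ and equals $-1$ or $-2$ at odd $m$, while $b_2(k)$ is increasing. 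Equivalently one can clear denominators, writing $\Delta(q)=\frac{(1-q^2)B(q)N(q)}{(1-q^8)(1-q^{10})}$ for an explicit polynomial $N(q)$ of degree at most $23$, and verify the corresponding inequality among coefficients. The leftover small cases $n\le N_0$ are checked by direct enumeration of $2$-regular partitions; this also pins down the equality cases and the strict deficit at $n=5$.

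The main obstacle is the positivity step of the previous paragraph: the negative terms are genuine, not negligible — for example $-B(q)\,q^5/(1-q^{10})$ removes a full copy of the rapidly growing coefficients of $B(q)$ at every $n\equiv5\pmod{10}$ — so the delicate point is to match each negative series against a precise dominating positive series and verify the domination uniformly in $n$, not merely asymptotically. Structurally, the "penalty" factors $\prod(1-q^{\mathrm{odd}})$ produced by the collapsing identity always straddle odd hook lengths, which is precisely why $b_{2,5}$ sits below $b_{2,4}$ here and, by the same mechanism, why Conjecture~\ref{conj-2} fails for every odd $k$.
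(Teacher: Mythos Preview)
Your setup is sound and your closed form for $R_4-R_5$ agrees (coefficientwise) with the paper's computation. The gap is exactly where you locate it yourself: the positivity step is only described, not executed, and the mechanism you sketch does not go through as stated. Concretely, the sequence $r_m:=[q^m](R_4-R_5)$ satisfies $r_m=0$ whenever $m\equiv 4\pmod 8$ (for $m\ge 12$), while the adjacent odd coefficient is $-1$ or $-2$; so the ``pair each even $m$ with $m+1$ and use that $b_2$ is increasing'' idea already fails at $m=12,13$. The Abel-summation variant needs more than you supply: the running sums $S_m=\sum_{j\le m}r_j$ have $S_5=-1$, so one must first peel off an initial block and then verify that \emph{all} of the remaining partial sums (eventually periodic of period $40$, drifting by $+1$ per period) stay nonnegative. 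That verification is the entire content of the theorem, and you have not carried it out.

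The paper bypasses this with a sharper device. Writing
\[
\Delta(q)=\frac{q^4\,P(q)}{(1-q^8)(1-q^{10})(q;q^2)_\infty}
\]
for an explicit degree-$21$ polynomial $P$, it finds the factorisation
\[
P(q)=A(q)(1-q)^2+\widetilde B(q)(1-q),
\]
where $A$ (degree $16$) and $\widetilde B(q)=q^{18}+q^{20}$ have only nonnegative coefficients. The $\widetilde B$-piece is visibly nonnegative after cancelling one factor $(1-q)$ against $(q;q^2)_\infty$. For the $A$-piece the key input is the known fact that
\[
[q^n]\,\frac{(1-q)^2}{(q;q^2)_\infty}\ge 0\qquad\text{for all }n\ge 5,
\]
i.e.\ $b_2(n)-2b_2(n-1)+b_2(n-2)\ge 0$ for $n\ge 5$; this propagates through the harmless factors $(1-q^8)^{-1}(1-q^{10})^{-1}$, and together with $\deg A=16$ gives nonnegativity for all $n\ge 25$. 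A finite check for $0\le n\le 24$, $n\ne 5$, then finishes. The idea your outline is missing is precisely this: extract $(1-q)^2$ from the numerator with a nonnegative cofactor and invoke the eventual convexity of $b_2(n)$, rather than trying to balance the periodic sign pattern of $R_4-R_5$ against the mere monotonicity of $b_2$.
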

\begin{thm}\label{thm-main22}
For $n\ge 0$ and $n\ne 7$, we have
\begin{equation}\label{eq-mainb26ngeb27n}
	b_{2,6}(n)\ge b_{2,7}(n).		
\end{equation}
\end{thm}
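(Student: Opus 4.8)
\medskip
\noindent\textbf{Proof plan for Theorem~\ref{thm-main22}.}
The approach is the generating-function one, run in parallel with the proof of Theorem~\ref{thm-main21} but with $k=6$. First I would record closed forms for
\[
B_6(q):=\sum_{n\ge0}b_{2,6}(n)q^{n}\qquad\text{and}\qquad B_7(q):=\sum_{n\ge0}b_{2,7}(n)q^{n}.
\]
One has to argue directly with $2$-regular (odd-part) partitions, since the classical bijection between $2$-regular and $2$-distinct partitions does not preserve hook lengths. A direct analysis of the Young diagrams of odd-part partitions (of the type already used for related hook statistics in the works cited above) gives an expression of the shape
\[
B_k(q)=\frac{R_k(q)}{\prod_{j}(1-q^{e_{k,j}})}\cdot\frac{1}{(q;q^{2})_{\infty}},
\]
with $R_k(q)$ an explicit polynomial and the $e_{k,j}$ small integers; for instance one checks that $B_1(q)=\frac{q}{1-q^{2}}\cdot\frac{1}{(q;q^{2})_{\infty}}$, and $B_6(q),B_7(q)$ are obtained the same way. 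Subtracting yields
\[
\sum_{n\ge0}\bigl(b_{2,6}(n)-b_{2,7}(n)\bigr)q^{n}=E(q)\cdot\frac{1}{(q;q^{2})_{\infty}}
\]
for an explicit rational function $E(q)$.

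Since $\frac{1}{(q;q^{2})_{\infty}}=\sum_{n\ge0}b_{2}(n)q^{n}$ has strictly positive coefficients, Theorem~\ref{thm-main22} is equivalent to the assertion that $2q^{7}+E(q)\cdot\frac{1}{(q;q^{2})_{\infty}}$ has only non-negative coefficients. I would clear the rational factor and rewrite this series as $\frac{W(q)}{(q;q^{2})_{\infty}}$, up to a polynomial of small degree absorbing the first few terms. If $W(q)$ already has non-negative coefficients we are done. Otherwise I would split $W(q)=W_{0}(q)+W_{1}(q)$ with $W_{0}$ a polynomial of controlled degree and $W_{1}$ written in a manifestly non-negative way, e.g.\ as a non-negative combination of series $\frac{q^{a}}{(1-q^{b_{1}})\cdots(1-q^{b_{r}})}\,T(q)$ with $T$ a polynomial having non-negative coefficients. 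The $W_{1}$-part then contributes non-negatively to every coefficient, while the $W_{0}\cdot\frac{1}{(q;q^{2})_{\infty}}$-part is controlled by direct estimates using the monotonicity of $b_{2}(n)$ (for example $b_{2}(n+1)\ge b_{2}(n)$, together with faster growth bounds for larger $n$): beyond an explicit threshold the finitely many negative coefficients of $W_{0}$ are dominated by neighbouring positive ones after convolution, and the remaining small values of $n$ are checked by direct computation.

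The main obstacle is exactly this positivity step: one must find the right non-negative decomposition of $W_{1}$ --- equivalently, the right monotonicity inequalities for $b_{2}$ --- so that the leftover polynomial $W_{0}$ is short enough to dominate. This is more delicate than for $k=4$, because the combined numerator built from $R_6$ and $R_7$ has larger degree and more internal cancellation, so the finite check and the estimates have to be pushed further (in contrast with the odd case of Theorem~\ref{thm-mainodd}, where the analogous $E(q)$ is not controllable in this way). A purely combinatorial alternative would be to build, for each $n\ne7$, an injection from pairs (odd-part partition $\lambda\vdash n$, cell of hook length $7$) to pairs (odd-part partition $\mu\vdash n$, cell of hook length $6$); but making such a map preserve the size and the odd-part condition while lowering the hook length seems hard, so I expect the generating-function route to be smoother. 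Finally, I would verify that $n=7$ is the only exception: the cells of hook length $7$ among the $2$-regular partitions of $7$ are precisely the principal hooks of the four hook-shaped odd-part partitions $(7)$, $(5,1,1)$, $(3,1,1,1,1)$ and $(1,1,1,1,1,1,1)$, so $b_{2,7}(7)=4$, whereas $b_{2,6}(7)=2$, and for every other $n$ the analysis above gives $b_{2,6}(n)\ge b_{2,7}(n)$.
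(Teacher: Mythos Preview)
Your plan is the paper's approach: express $\sum_n(b_{2,6}(n)-b_{2,7}(n))q^n$ as an explicit rational function times $(q;q^2)_\infty^{-1}$ (via the Craig--Dawsey--Han formula, Theorem~\ref{thm-ahq}), peel off manifestly non-negative pieces, and reduce the remainder to a growth property of $b_2(n)$ plus a finite check. The precise ``monotonicity inequality for $b_2$'' that resolves the obstacle you flag is not first-difference monotonicity but the eventual convexity statement of Theorem~\ref{lem-pos-mathworld}, namely that $\dfrac{(1-q)^2}{(q;q^2)_\infty}$ has non-negative coefficients for $n\ge 5$; the paper's decomposition (Lemma~\ref{thm-main22-lem1}) factors $(1-q)^2$ out of the numerator, leaving an explicit non-negative polynomial $C(q)$ and two smaller terms, so positivity follows for $n\ge 55$ and only $n\le 54$ must be checked directly.
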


After checking $8\le k\le 20$ and $n\le 10000$, we {propose} the following conjecture, which is a modification of Conjecture \ref{conj-2}.

\begin{conj}\label{conj-3}
For   even $k\ge 8$, $b_{2,k}(n)\ge b_{2,k+1}(n)$ for all $n\ge 0$ and $n\ne k+1$.
\end{conj}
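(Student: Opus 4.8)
The plan is to push the argument behind Theorems~\ref{thm-main21} and \ref{thm-main22} through uniformly in the even parameter $k$, rather than one value of $k$ at a time; since Conjecture~\ref{conj-3} is still open I only sketch the route. The starting point is a generating function identity of the shape
\begin{equation}\label{eq-proposal-gf}
\sum_{n\ge 0} b_{2,m}(n)\,q^{n}\;=\;\Big(\prod_{j\ge 1}(1+q^{j})\Big)\,R_m(q),
\end{equation}
valid for every $m\ge 1$, where $R_m(q)$ is an explicit rational function whose poles sit at roots of unity of order dividing a small multiple of $m$; already in the simplest case $R_1(q)=q/(1-q^{2})$ there is a pole at $q=-1$, which is ultimately the source of all the parity phenomena. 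One derives \eqref{eq-proposal-gf} from the rim-hook reading of hook lengths (a cell of hook length $m$ in $\lambda$ is the head of a unique removable rim $m$-hook) together with the description of $2$-regular partitions as partitions into odd parts, keeping explicit track of how $R_m$ depends on the residues of $m$ modulo small integers. Subtracting the $m=k$ and $m=k+1$ instances,
\[
F_k(q):=\sum_{n\ge 0}\big(b_{2,k}(n)-b_{2,k+1}(n)\big)q^{n}=\Big(\prod_{j\ge 1}(1+q^{j})\Big)\big(R_k(q)-R_{k+1}(q)\big),
\]
so the whole problem reduces to showing that, for even $k$, the coefficients of $\big(R_k(q)-R_{k+1}(q)\big)\prod_{j\ge1}(1+q^{j})$ are nonnegative for all $n\ge 0$ with $n\neq k+1$.

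The next step is to split $R_k(q)-R_{k+1}(q)=P_k(q)+E_k(q)$, where $P_k(q)$ has nonnegative power-series coefficients — so $P_k(q)\prod(1+q^{j})$ does too — and $E_k(q)$ is a controlled remainder carried by the lower-order cyclotomic poles. Since Theorem~\ref{thm-mainodd} shows the sign of $F_k(q)$ genuinely depends on the parity of $k$, the parity must enter only through $E_k$, and one expects $E_k$ to produce a term of the rough form $(-1)^{n}$ times a slowly varying factor — with the favourable sign for even $k$ and the unfavourable sign for odd $k$. One then estimates the convolution coefficients using $\prod_{j\ge1}(1+q^{j})=\sum_n b_2(n)q^{n}$, the Hardy--Ramanujan asymptotic $b_2(n)\sim c\,n^{-3/4}e^{\pi\sqrt{n/3}}$, and the (eventual) monotonicity of $b_2(n)$ together with elementary bounds on ratios $b_2(n+1)/b_2(n)$ and partial sums $\sum_{m\le M}b_2(n-m)$ of exactly the kind already used in the proofs for $k=4$ and $k=6$, concluding $b_{2,k}(n)\ge b_{2,k+1}(n)$ for all $n\ge N(k)$ with $N(k)$ effective.

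To close the statement one needs $N(k)$ bounded by an explicit polynomial in $k$ (the numerical evidence in the paper suggests something quite small, e.g.\ $N(k)=O(k^{2})$). Then $n=k+1$ is treated by hand, as it is the stated exception, and the window $k+1<n<N(k)$ must be handled; because $k$ is unbounded this cannot be a literal finite check, so instead one should exhibit for $n$ in that window a single dominant family of hook-length-$k$ cells — for instance those lying in a sufficiently long first part of an odd-part partition — whose count already majorizes $b_{2,k+1}(n)$, reducing the window to a short inequality uniform in $k$.

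The main obstacle is the uniformity of the split-and-estimate step in $k$. For a fixed small even $k$ one can massage $R_k(q)-R_{k+1}(q)$ by hand into ``nonnegative plus tiny error'', and that is exactly what makes $k=4$ and $k=6$ tractable; but as $k$ grows $R_m$ acquires cyclotomic poles of order up to $\sim k$, and the near-cancellations in $R_k-R_{k+1}$ that are meant to force nonnegativity for even $k$ — and that, by Theorem~\ref{thm-mainodd}, genuinely fail for odd $k$ — become increasingly fragile. Any argument that works must be sharp enough to detect this parity dependence, which rules out crude majorization; that is precisely why the even case is still only a conjecture.
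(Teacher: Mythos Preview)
The paper does not prove Conjecture~\ref{conj-3}; it is stated there purely as a conjecture, supported only by the numerical check for $8\le k\le 20$ and $n\le 10000$ mentioned just before its statement. So there is no ``paper's own proof'' to compare your proposal against.

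You are candid about this (``since Conjecture~\ref{conj-3} is still open I only sketch the route''), and your outline is a reasonable extrapolation of the method the paper uses for $k=4$ and $k=6$: write $\sum_n(b_{2,k}(n)-b_{2,k+1}(n))q^n$ as $(-q;q)_\infty$ times an explicit rational function via Theorem~\ref{thm-ahq}, decompose the rational part into pieces that are manifestly coefficientwise nonnegative plus a small remainder, and handle the remainder with asymptotics for $Q(n)$. That is exactly the template of Lemmas~\ref{thm-main21-lem1}--\ref{lem-main22-1}. Your identification of the obstacle is also accurate: the decompositions in \eqref{eq-AB} and \eqref{eq-main22-3-2q} are found by hand for each fixed $k$, and nothing in the paper suggests a uniform-in-$k$ factorization of the numerator polynomial into $(1-q)^2$ times a positive polynomial plus lower-order terms.

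That said, what you have written is a research strategy, not a proof. Several steps are genuinely missing, not merely unwritten: you do not produce the split $R_k-R_{k+1}=P_k+E_k$ with $P_k$ nonnegative, you do not establish that the oscillatory part of $E_k$ has the favourable sign for even $k$ (only that Theorem~\ref{thm-mainodd} shows it has the unfavourable sign for odd $k$), and the proposed treatment of the window $k+1<n<N(k)$ by a ``single dominant family of hook-length-$k$ cells'' is a heuristic with no supporting estimate. Any of these could fail. So the proposal should be read as an informed plan of attack on an open problem, consistent with the paper's own stance, rather than as a proof or even a proof sketch with routine details omitted.
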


This paper is organized as follows.    Section \ref{section-Proofthm1} is devoted to proving Theorem \ref{thm-main1}. To be specific, we first transform the generating function of $b_{3,2}(n)-b_{3,1}(n)$ into a difference of two $q$-series, and then combinatorially interpret each $q$-series as a set of partition triplets. We next divide these two sets into seven disjoint subsets. After building six bijections on the first six subsets and analyzing the cardinality of the seventh subset, we provide a semi-combinatorial proof of Theorem \ref{thm-main1}.   The proof of Theorem \ref{thm-mainodd} will be given in Section \ref{section-proofmainodd}. To this end,   we will show that there exists infinitely many negative coefficients of $q^n$ in $B_t(q)\left(\sum_{n=0}^{\infty}(b_{2,2t+1}(n)-b_{2,2t+2}(n))q^n\right)$, where $B_{t}(q)$ has only positive coefficients. In Section \ref{section-proofmain2}, we will prove Theorem \ref{thm-main21}  and Theorem \ref{thm-main22} by analyzing {the} coefficients of $q^n$ in the difference between the generating functions of $b_{2,i}(n)$ and $b_{2,i+1}(n)$ for $i=4$ and $6$.

\section{Proof of Theorem \ref{thm-main1}}\label{section-Proofthm1}

In this section, we give a proof of Theorem \ref{thm-main1}.  To this end, we first introduce some notations on integer partitions. Throughout this paper,
for a fixed partition $\lambda$ of $n$, let $\ell(\lambda)$ denote the length of $\lambda$ and let $|\lambda|$  denote the summation of $\lambda$. We also write a partition $\lambda$ of $n$ as $(1^{f_\lambda(1)},2^{f_\lambda(2)},\ldots,n^{f_\lambda(n)})$, where  $f_\lambda(t)$ denotes the number of appearances of $t$ in $\lambda$. For simplicity, we omit the term $k^{0}$ and write $k$ instead of $k^1$.    For example, let  $\lambda=(5,3,2,2,1,1,1)$ be a partition of $15$. Then $\ell(\lambda)=7$, $|\lambda|=15$ and $f_{\lambda}(5)=f_{\lambda}(3)=1$, $f_{\lambda}(2)=2$, $f_{\lambda}(1)=3$. We also write $\lambda$ as $(1^3,2^2,3,5)$.

Recall that Singh and Barman \cite{singh-barman-hook} gave the expressions on the generating functions of $b_{t,1}(n)$ and $b_{3,2}(n)$ as the following two theorems.

\begin{thm}[\cite{singh-barman-hook}]\label{thm-bt1n}
	For $t\ge 2$, we have
	\begin{equation}\label{eq-bt1n}
		\sum_{n=0}^{\infty}b_{t,1}(n)q^n=\frac{(q^t;q^t)_{\infty}}{(q;q)_{\infty}}\left(\frac{q}{1-q}-\frac{q^t}{1-q^t}\right).
	\end{equation}
\end{thm}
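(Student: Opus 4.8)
The plan is to give a short combinatorial derivation. The starting point is the observation that the number of cells of hook length $1$ in the Young diagram of a partition $\lambda$ equals the number of distinct part sizes of $\lambda$: a cell has hook length $1$ exactly when it has no cell to its right and none below it, and for each distinct value $v$ occurring as a part there is precisely one such cell, namely the rightmost cell of the bottommost row of length $v$. (For example, the four cells labelled $1$ in Figure~\ref{F2} correspond to the distinct parts $4,3,2,1$ of $(4,3,3,2,1)$.) Consequently
\[
b_{t,1}(n)=\sum_{\substack{\lambda\vdash n\\ \lambda\ t\text{-regular}}}\#\{\text{distinct part sizes of }\lambda\}
=\sum_{\substack{j\ge 1\\ t\nmid j}}\#\{\lambda\vdash n:\ \lambda\ t\text{-regular},\ f_\lambda(j)\ge 1\},
\]
where we interchanged the two summations and used that every part of a $t$-regular partition is a positive integer not divisible by $t$.

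Next, for a fixed $j$ with $t\nmid j$, deleting one copy of the part $j$ gives a bijection between the $t$-regular partitions of $n$ with $f_\lambda(j)\ge1$ and the $t$-regular partitions of $n-j$; the inverse appends one more part equal to $j$, and $t$-regularity is preserved in both directions precisely because $t\nmid j$. Hence the inner cardinality above equals $b_t(n-j)$, so
\[
\sum_{n\ge 0}b_{t,1}(n)q^n=\sum_{\substack{j\ge1\\ t\nmid j}}q^{j}\sum_{m\ge0}b_t(m)q^{m}
=\left(\sum_{\substack{j\ge1\\ t\nmid j}}q^{j}\right)\frac{(q^t;q^t)_\infty}{(q;q)_\infty},
\]
where in the last step we use the standard product formula $\sum_{m\ge0}b_t(m)q^m=\prod_{t\nmid j}(1-q^j)^{-1}=(q^t;q^t)_\infty/(q;q)_\infty$.

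Finally, I would evaluate the remaining series by removing the multiples of $t$ from a full geometric series,
\[
\sum_{\substack{j\ge1\\ t\nmid j}}q^{j}=\sum_{j\ge1}q^{j}-\sum_{j\ge1}q^{tj}=\frac{q}{1-q}-\frac{q^t}{1-q^t},
\]
and substituting this into the previous display yields \eqref{eq-bt1n}. I do not expect any genuine obstacle here: the only step that must be stated with care is the first one, namely the identification of hooks of length $1$ with inner corners, equivalently with distinct part sizes; once that is granted, everything else is an elementary manipulation of generating functions.
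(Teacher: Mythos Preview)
Your proof is correct. The paper itself does not prove Theorem~\ref{thm-bt1n}; it is quoted from \cite{singh-barman-hook} and invoked without argument in the derivation of Lemma~\ref{thm1-lem-1}, so there is no in-paper proof to compare against. Your combinatorial route---identifying cells of hook length $1$ with the distinct part sizes of $\lambda$, then counting via the bijection that removes one copy of a fixed part $j$ with $t\nmid j$---is standard and complete; every step is valid as stated.
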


\begin{thm}[\cite{singh-barman-hook}]\label{thm-b32n}
	We have
	\begin{equation}\label{eq-b32n}
\sum_{n=0}^{\infty}b_{3,2}(n)q^n=\frac{(q^3;q^3)_{\infty}}{(q;q)_{\infty}}\left(\frac{q^2}{1-q}+\frac{q^2}{1-q^2}-\frac{2q^3}{1-q^3}\right).
	\end{equation}
\end{thm}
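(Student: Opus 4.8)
The plan is to prove the identity by directly enumerating the cells of hook length $2$ in each $3$-regular partition and then assembling the generating function one feature at a time. A cell has hook length $2$ exactly when its arm and leg lengths sum to $1$, so it is of one of two kinds: a \emph{horizontal} cell, with arm $1$ and leg $0$, or a \emph{vertical} cell, with arm $0$ and leg $1$. The first step is to translate these two local conditions into the language of parts. Writing a $3$-regular partition $\lambda$ as $(p_1^{m_1},\ldots,p_r^{m_r})$ with $p_1>\cdots>p_r\ge 1$, a vertical cell sits at the second-to-last row of each block of equal parts and nowhere else, so the number of vertical cells equals the number of distinct part values $v$ with $f_\lambda(v)\ge 2$. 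A horizontal cell sits at the last row of the block of value $v$ precisely when $v\ge 2$ and the next smaller distinct value is at most $v-2$; equivalently, the number of horizontal cells equals $\#\{v\ge 2:\ v \text{ is a part of }\lambda \text{ and } v-1 \text{ is not}\}$, where the smallest block is covered by the convention that $0$ is never a part.

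Next I would compute the two generating functions separately using the standard device of marking a single part. The basic computation is that, for any allowed value $v$ (i.e. $3\nmid v$), the generating function for $3$-regular partitions in which $v$ occurs at least once is $q^{v}\frac{(q^3;q^3)_\infty}{(q;q)_\infty}$, the one in which $v$ occurs at least twice is $q^{2v}\frac{(q^3;q^3)_\infty}{(q;q)_\infty}$, and the one in which two prescribed allowed values $v,w$ both occur is $q^{v+w}\frac{(q^3;q^3)_\infty}{(q;q)_\infty}$; each follows from cancelling the factor $\frac{1}{1-q^v}$ against $(1-q^v)$. Summing the vertical contribution over all allowed $v$ of multiplicity $\ge 2$ gives $\frac{(q^3;q^3)_\infty}{(q;q)_\infty}\sum_{3\nmid v}q^{2v}=\frac{(q^3;q^3)_\infty}{(q;q)_\infty}\big(\frac{q^2}{1-q^2}-\frac{q^6}{1-q^6}\big)$. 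For the horizontal contribution I would split according to $v\bmod 3$: when $v\equiv 1\pmod 3$ the forbidden value $v-1$ is a multiple of $3$ and hence automatically absent, contributing $q^{v}$, whereas when $v\equiv 2\pmod 3$ the value $v-1$ is itself allowed and the inclusion–exclusion ``$v$ present but $v-1$ absent'' contributes $q^{v}(1-q^{v-1})$. Summing the resulting geometric series (over $v\ge 2$ in each residue class) yields $\frac{(q^3;q^3)_\infty}{(q;q)_\infty}\big(\frac{q^2+q^4}{1-q^3}-\frac{q^3}{1-q^6}\big)$.

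Finally I would add the two pieces and simplify. Combining $-\frac{q^3}{1-q^6}-\frac{q^6}{1-q^6}=-\frac{q^3}{1-q^3}$ and using $\frac{q^2}{1-q}=\frac{q^2+q^3+q^4}{1-q^3}$, the bracketed factor collapses to $\frac{q^2}{1-q}+\frac{q^2}{1-q^2}-\frac{2q^3}{1-q^3}$, which is exactly the factor in \eqref{eq-b32n}. I expect the main obstacle to be not the algebra but pinning down the two combinatorial characterizations exactly, including the boundary cases (the smallest block, and whether a gap between consecutive parts is forced by a multiple of $3$). In particular the horizontal count genuinely depends on $v\bmod 3$, since a descent gap of size $\ge 2$ can arise either by omitting an allowed value $v-1$ or, for free, through the unavoidable absence of a forbidden multiple of $3$; keeping these cases straight, rather than any single hard estimate, is where the care is needed.
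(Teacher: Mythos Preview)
Your argument is correct. The characterizations of the two kinds of hook-$2$ cells are accurate (including the boundary row), the marked-part generating functions are computed properly, and the final simplification
\[
\frac{q^2}{1-q^2}+\frac{q^2+q^4}{1-q^3}-\frac{q^3+q^6}{1-q^6}
=\frac{q^2}{1-q^2}+\frac{q^2-q^3+q^4}{1-q^3}
=\frac{q^2}{1-q^2}+\frac{q^2}{1-q}-\frac{2q^3}{1-q^3}
\]
checks out.

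As for comparison with the paper: there is nothing to compare against. The paper does not prove this theorem; it merely quotes it from Singh and Barman \cite{singh-barman-hook} and uses it as input to Lemma~\ref{thm1-lem-1}. Your self-contained combinatorial derivation therefore goes beyond what the present paper provides. The approach you outline---classifying hook-$2$ cells by arm/leg type and reading each type off from the multiset of part values---is essentially the method used in \cite{singh-barman-hook}, so you are effectively reconstructing the original proof rather than offering a genuinely new route.
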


Here we use the standard $q$-series notation
\[(a;q)_n=\prod_{i=1}^{n}(1-aq^{i-1}),\quad
(a;q)_\infty=\prod_{i=1}^{\infty}(1-aq^{i-1}),\]
and
\[{m\choose n}_q=\frac{(q;q)_m}{(q;q)_n(q;q)_{m-n}}.\]

From the above two theorems,  we may transform the generating function of $b_{3,2}(n)-b_{3,1}(n)$ as in the following lemma.

\begin{lem}\label{thm1-lem-1}
The following identity holds:
	\begin{equation}\label{eq-thm1-lem-1}
		\sum_{n=0}^{\infty}(b_{3,2}(n)-b_{3,1}(n))q^n=-q(1+q^2+q^4)(1+q^3+q^6)\left(\frac{(q^{12};q^3)_{\infty}}{(1-q^2)(q^5;q)_{\infty}}-\frac{q^3(q^{12};q^3)_{\infty}}{(1-q^2)(q^4;q)_{\infty}}\right).
	\end{equation}
\end{lem}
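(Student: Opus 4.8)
The plan is to start from the two generating function identities in Theorem~\ref{thm-bt1n} (with $t=3$) and Theorem~\ref{thm-b32n}, subtract them, and then factor out the common prefactor $\frac{(q^3;q^3)_\infty}{(q;q)_\infty}$. This gives
\[
\sum_{n=0}^\infty (b_{3,2}(n)-b_{3,1}(n))q^n
=\frac{(q^3;q^3)_\infty}{(q;q)_\infty}\left(\frac{q^2}{1-q}+\frac{q^2}{1-q^2}-\frac{2q^3}{1-q^3}-\frac{q}{1-q}+\frac{q^3}{1-q^3}\right),
\]
so the first step is the purely algebraic simplification of the bracketed rational function. Combining the four terms over the common denominator $(1-q)(1-q^2)(1-q^3)$ and simplifying the numerator should collapse things considerably; I expect the bracket to reduce to something like $\frac{-q(1-q^3)}{(1-q)(1-q^2)}$ or a similarly compact expression after the cancellations (the $q^2/(1-q)-q/(1-q)=-q(1-q)/(1-q)\cdot\frac1{1}$ pieces and the $-2q^3/(1-q^3)+q^3/(1-q^3)=-q^3/(1-q^3)$ piece should telescope nicely). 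This is routine but must be done carefully to land exactly on the claimed form.

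Next I would rewrite the prefactor $\frac{(q^3;q^3)_\infty}{(q;q)_\infty}$ using the elementary identity $(q;q)_\infty=(q;q^3)_\infty(q^2;q^3)_\infty(q^3;q^3)_\infty$, which cancels the $(q^3;q^3)_\infty$ and yields $\frac1{(q;q^3)_\infty(q^2;q^3)_\infty}$. I would then peel off the smallest factors from each of these two infinite products: writing $(q;q^3)_\infty=(1-q)(1-q^4)(q^7;q^3)_\infty$ and $(q^2;q^3)_\infty=(1-q^2)(1-q^5)(q^8;q^3)_\infty$, and similarly reorganizing so that a factor $\frac{1}{(1-q^2)(q^5;q)_\infty}$ or $\frac{1}{(1-q^2)(q^4;q)_\infty}$ appears as in the two terms on the right-hand side of \eqref{eq-thm1-lem-1}. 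The appearance of the telescoping-type factors $(1+q^2+q^4)=\frac{1-q^6}{1-q^2}$ and $(1+q^3+q^6)=\frac{1-q^9}{1-q^3}$, together with $(q^{12};q^3)_\infty$, strongly suggests that the bracket combined with $\frac{1}{(q;q^3)_\infty(q^2;q^3)_\infty}$ is being massaged into a common factor $(1+q^2+q^4)(1+q^3+q^6)(q^{12};q^3)_\infty/(1-q^2)$ times a difference of two simple reciprocal products differing only in whether the $(1-q^4)$ factor is present; writing $\frac{1}{(q^4;q)_\infty}-\frac{q^3}{(q^5;q)_\infty}$ and clearing, one gets $\frac{1-q^3(1-q^4)}{(q^4;q)_\infty}$ after noting $(q^4;q)_\infty=(1-q^4)(q^5;q)_\infty$, so the difference in the big parenthesis of \eqref{eq-thm1-lem-1} equals $\frac{(q^{12};q^3)_\infty(1-q^3+q^7)}{(1-q^2)(q^4;q)_\infty}$; matching this against the simplified bracket times the reciprocal products pins down the remaining bookkeeping.

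So the proof is essentially: (i) substitute and simplify the rational bracket; (ii) use $(q;q)_\infty=(q;q^3)_\infty(q^2;q^3)_\infty(q^3;q^3)_\infty$ to cancel the cube factor; (iii) extract the explicit low-order factors $(1-q)$, $(1-q^2)$, $(1-q^4)$, $(1-q^5)$, $(1-q^6)$, $(1-q^9)$ from the remaining products and regroup them into $(1+q^2+q^4)$, $(1+q^3+q^6)$, $(q^{12};q^3)_\infty$, $(q^4;q)_\infty$, $(q^5;q)_\infty$ and $(1-q^2)$; and (iv) check that the difference of the two resulting $q$-series matches the simplified bracket, including the overall sign $-q$ and the $-q^3$ coefficient in the second term. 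The main obstacle I anticipate is purely organizational rather than conceptual: keeping track of exactly which finite factors have been split off from which infinite product so that the two fractions inside the parentheses of \eqref{eq-thm1-lem-1} end up differing only in the single factor $(q^4;q)_\infty$ versus $(q^5;q)_\infty$ and in the extra power $q^3$. A sensible safeguard is to verify the claimed identity as a formal power series to, say, order $q^{30}$ as an independent check once the algebraic manipulation is complete, since everything here is an identity of rational/eta-quotient type and no inequality or asymptotics enters at this stage.
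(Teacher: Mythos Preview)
Your plan is essentially the paper's own approach: subtract the two known generating functions, simplify the resulting rational bracket, and then reorganize the prefactor $\frac{(q^3;q^3)_\infty}{(q;q)_\infty}$ by peeling off a few small factors. Two small corrections to watch: the bracket actually simplifies to $-q+\frac{q^2}{1-q^2}-\frac{q^3}{1-q^3}=\frac{-q(1-q-q^3+q^5)}{(1-q^2)(1-q^3)}$, not to your guessed $\frac{-q(1-q^3)}{(1-q)(1-q^2)}$; and in the target parenthesis the order is $\frac{1}{(q^5;q)_\infty}-\frac{q^3}{(q^4;q)_\infty}$ (not the reverse), so the combined numerator over $(q^4;q)_\infty$ is $1-q^3-q^4$, not $1-q^3+q^7$. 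The one clean trick in the paper that you may wish to borrow is the factorization $1-q-q^3+q^5=(1-q)\bigl((1-q^4)-q^3\bigr)$, which immediately produces the two-term split and makes the matching in your step~(iv) automatic rather than something to be reverse-engineered.
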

\begin{proof}
	Letting $t=3$ in Theorem \ref{thm-bt1n}, we have
	\begin{equation}\label{eq-b31n}
		\sum_{n=0}^{\infty}b_{3,1}(n)q^n=\frac{(q^3;q^3)_{\infty}}{(q;q)_{\infty}}\left(\frac{q}{1-q}-\frac{q^3}{1-q^3}\right).
	\end{equation}
	Combining with Theorem \ref{thm-b32n}, we deduce that
	\begin{align}\label{eq-b32n-b31n}
		&\sum_{n=0}^{\infty}(b_{3,2}(n)-b_{3,1}(n))q^n\nonumber\\
		=&\frac{(q^3;q^3)_{\infty}}{(q;q)_{\infty}}\left(\frac{q^2}{1-q}+\frac{q^2}{1-q^2}-\frac{2q^3}{1-q^3}\right)-\frac{(q^3;q^3)_{\infty}}{(q;q)_{\infty}}\left(\frac{q}{1-q}-\frac{q^3}{1-q^3}\right)\nonumber\\
		=&\frac{(q^3;q^3)_{\infty}}{(q;q)_{\infty}}\left(-q+\frac{q^2}{1-q^2}-\frac{q^3}{1-q^3}\right)\nonumber\\
		=&\frac{(q^3;q^3)_{\infty}}{(q;q)_{\infty}}\left(\frac{-q(1-q-q^3+q^5)}{(1-q^2)(1-q^3)}\right).
\end{align}
Notice that
\begin{equation}\label{equ-temp-thm-main-1}
  1-q-q^3+q^5=(1-q)(1-q^4)-q^3(1-q).
\end{equation}
Multiplying
\[\frac{-q(q^3;q^3)_\infty}{(1-q^2)(1-q^3)(q;q)_\infty}\]
on both sides of \eqref{equ-temp-thm-main-1}, we have
		\begin{align}\label{eq-frac-q3-q3} &\frac{(q^3;q^3)_{\infty}}{(q;q)_{\infty}}\left(\frac{-q(1-q-q^3+q^5)}{(1-q^2)(1-q^3)}\right)\nonumber\\
=&-q\left(\frac{(q^3;q^3)_{\infty}}{(1-q^2)^2(1-q^3)^2(q^5;q)_{\infty}}-\frac{q^3(q^3;q^3)_\infty}{(1-q^2)^2(1-q^3)^2(q^4;q)_\infty}\right)\nonumber\\
=&-q\frac{(1-q^6)(1-q^9)}{(1-q^2)(1-q^3)}\left(\frac{(q^{12};q^3)_{\infty}}{(1-q^2)(q^5;q)_{\infty}}-\frac{q^3(q^{12};q^3)_{\infty}}{(1-q^2)(q^4;q)_{\infty}}\right)\nonumber\\
		=&-q(1+q^2+q^4)(1+q^3+q^6)\left(\frac{(q^{12};q^3)_{\infty}}{(1-q^2)(q^5;q)_{\infty}}-\frac{q^3(q^{12};q^3)_{\infty}}{(1-q^2)(q^4;q)_{\infty}}\right).	
	\end{align}
Combining \eqref{eq-b32n-b31n} and \eqref{eq-frac-q3-q3}, we derive our desired result.
\end{proof}

We proceed to show that the coefficients of $q^n$ {in}
\begin{equation}\label{eq-mainle}
	\frac{(q^{12};q^3)_{\infty}}{(1-q^2)(q^5;q)_{\infty}}- \frac{q^3(q^{12};q^3)_{\infty}}{(1-q^2)(q^4;q)_{\infty}}
\end{equation}
are non-positive for all $n\ge 152$. Thus by Lemma \ref{thm1-lem-1}, we see that $b_{3,2}(n)\ge b_{3,1}(n)$ for $n\ge 163$.

We first   give combinatorial interpretations of
\[\frac{(q^{12};q^3)_{\infty}}{(1-q^2)(q^5;q)_{\infty}}\quad\text{and}
\quad \frac{q^3(q^{12};q^3)_{\infty}}{(1-q^2)(q^4;q)_{\infty}}\]
 as follows.

On the one hand, let $\mathcal{T}(n)$ denote the set of partition triplets $(\alpha,\beta,\gamma)$ where $\alpha$ is a partition with each part congruent to $2$ modular $3$, $\beta$ is a partition with each part congruent to $1$ modular $3$ and the minimum part {in $\beta$} is not less than $7$, $\gamma$ is a partition with  each part equal to either $6$ or $9$. Moreover, $|\alpha|+|\beta|+|\gamma|=n$. Let $t(n)$ denote the cardinality of $\mathcal{T}(n)$. It is clear that
\begin{equation}
	\sum_{n=0}^{\infty}t(n)q^n=\frac{1}{(q^2;q^3)_\infty}\cdot\frac{1}{(q^7;q^3)_\infty}\cdot\frac{1}{(1-q^6)(1-q^9)}=\frac{(q^{12};q^3)_{\infty}}{(1-q^2)(q^5;q)_{\infty}}.
\end{equation}

On the other hand, define $\mathcal{S}(n)$ to be the set of partition triplets $(\pi,\mu,\delta)$ where $\pi$ is a partition with each part congruent to $2$ modular $3$, $\mu$ is a partition with each part congruent to $1$ modular $3$ and the minimum part {in $\mu$} is not less than $4$, $\delta$ is a partition with  each part equal to either $6$ or $9$. Moreover $|\pi|+|\mu|+|\delta|=n$. Let $s(n)$ denote the cardinality of $\mathcal{S}(n)$, then we have
\begin{equation}
\sum_{n=0}^{\infty}s(n)q^n=\frac{1}{(q^2;q^3)_\infty}\cdot\frac{1}{(q^4;q^3)_\infty}\cdot\frac{1}{(1-q^6)(1-q^9)}=\frac{(q^{12};q^3)_{\infty}}{(1-q^2)(q^4;q)_{\infty}}\label{eq-gensn}.
\end{equation}

Thus, to show the coefficients of $q^n$ in \eqref{eq-mainle} is non-positive for $n\ge 152$, it suffices to show that $t(n)\le s(n-3)$ for $n\ge 152$. To this end, we first partition the set $\mathcal{T}(n)$ into seven disjoint subsets $\mathcal{T}_i(n)$, where $1\le i\le 7$. We then  list seven disjoint subsets $\mathcal{S}_i(n-3)$ of the set   $\mathcal{S}(n-3)$, where $1\le i\le 7$. After showing that $\# \mathcal{T}_i(n)\le \#\mathcal{S}_i(n-3)$ for $1\le i\le 7$ and $n\ge 152$, we arrive at $t(n)\le s(n-3)$ for $n\ge 152$.

First, $\mathcal{T}_i(n)$ ($1\le i\le 7$) can be described as follows.
\begin{itemize}
	\item[(1)] $\mathcal{T}_1(n)$ is the set of  $(\alpha, \beta, \gamma)\in\mathcal{T}(n)$ such that  $\beta \neq \emptyset$;
	
	\item[(2)] $\mathcal{T}_2(n)$ is the set of $(\alpha, \beta, \gamma)\in\mathcal{T}(n)$ such that $\beta=\emptyset$, $\alpha_1>\alpha_2$ and $\alpha_1 \ge 5$;
	
	\item[(3)] $\mathcal{T}_3(n)$ is the set of $(\alpha, \beta, \gamma)\in\mathcal{T}(n)$ such that $\beta=\emptyset$ and $\alpha_1=\alpha_2\ge 11$;
	
	\item[(4)] $\mathcal{T}_4(n)$ is the set of $(\alpha, \beta, \gamma)\in\mathcal{T}(n)$ such that $\beta=\emptyset$ and $\alpha_1=\alpha_2= 8$;
	
	\item[(5)] $\mathcal{T}_5(n)$ is the set of $(\alpha, \beta, \gamma)\in\mathcal{T}(n)$ such that $\beta=\emptyset$, $f_\gamma(6)\ge 4$ and either $\alpha_1=\alpha_2\le 5$ or $\alpha=(2)$;
	
	\item[(6)] $\mathcal{T}_6(n)$ is the set of $(\alpha, \beta, \gamma)\in\mathcal{T}(n)$ such that $\beta=\emptyset$, $f_\gamma(6)\le 3$, $f_\gamma(9)\ge 2$ and either $\alpha_1=\alpha_2\le 5$ or $\alpha=(2)$;

	\item[(7)] $\mathcal{T}_7(n)$ is the set of $(\alpha, \beta, \gamma)\in\mathcal{T}(n)$ such that $\beta=\emptyset$,  $f_{\gamma}(6)\le 3$, $f_{\gamma}(9)\le 1$ and either $\alpha_1=\alpha_2\le 5$ or $\alpha=(2)$.

\end{itemize}
 It is easy to check that $\cup_{i=1}^7\mathcal{T}_i(n)=\mathcal{T}(n)$ and $\mathcal{T}_i(n)\cap\mathcal{T}_j(n)=\emptyset$ for all $1\le i< j\le 7$.

The seven disjoint subsets $\mathcal{S}_{i}(n-3)$ ($1\le i\le 7$) {are} defined as follows.
\begin{itemize}
	\item[(1)] $\mathcal{S}_1(n-3)$ is the set of $(\pi,\mu,\delta)\in \mathcal{S}(n-3)$ such that the smallest part of $\mu$ is unique;
	
	\item[(2)] $\mathcal{S}_2(n-3)$ is the set of $(\pi,\mu,\delta)\in\mathcal{S}(n-3)$ such that $\pi_1 \ge 2$ and $\mu = \emptyset$;

	\item[(3)] $\mathcal{S}_3(n-3)$ is the set of $(\pi,\mu,\delta)\in\mathcal{S}(n-3)$ satisfying the following three restrictions:
	\begin{itemize}
		\item[(i)] $\mu=(\mu_1,4,4,4)$;
	\item[(ii)] $\mu_1\ge \max\{2\pi_1-15,7\}$;
	\item[(iii)] $\mu_1$ odd;
	\end{itemize}

	\item[(4)] $\mathcal{S}_4(n-3)$ is the set of $(\pi,\mu,\delta)\in\mathcal{S}(n-3)$ such that $\pi_1\le 8$, $5$ is a part of $\pi$ and $\mu=(4,4)$;
	
	\item[(5)] $\mathcal{S}_5(n-3)$ is the set of $(\pi,\mu,\delta)\in\mathcal{S}(n-3)$ such that $\mu=(7,7,7)$ and either $\pi_1=\pi_2\le 5$ or $\pi=(2)$;

	\item[(6)] $\mathcal{S}_6(n-3)$ is the set of $(\pi,\mu,\delta)\in\mathcal{S}(n-3)$ such that $\mu=(7,4,4)$, $f_\delta(6)\le 3$ and either $\pi_1=\pi_2\le 5$ or $\pi=(2)$;
	\item[(7)] $\mathcal{S}_7(n-3)$ is the set of $(\pi,\mu,\delta)\in\mathcal{S}(n-3)$ such that $\pi=(2^x)$, $\mu=(4^y,7^z)$ and $\delta= \emptyset$, where $x,z\ge 0$ and $y\ge 4$.
\end{itemize}
It is easy to check that $\mathcal{S}_i(n-3)$ and $\mathcal{S}_j(n-3)$ are disjoint for any $1\le i<j\le 7$. We proceed to construct six bijections $\phi_i$ between $\mathcal{T}_i(n)$ and $\mathcal{S}_i(n-3)$ for $1\le i\le 6$. Moreover we will show that $\# \mathcal{T}_7(n)\le \# \mathcal{S}_7(n-3)$ for $n\ge 152$. In this way, we verify that $t(n)\le s(n-3)$ for $n\ge 152$.

We first give the bijection $\phi_1$ as follows.

\begin{lem}\label{thm1-lem-2}
	There is a bijection $\phi_1$ between $\mathcal{T}_1(n)$ and $\mathcal{S}_1(n-3)$.
\end{lem}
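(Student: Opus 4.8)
The plan is to construct $\phi_1$ by transferring a controlled amount of weight out of the $\beta$-component (whose parts are $\equiv 1 \pmod 3$ and at least $7$) and into the $\mu$-component (whose parts are $\equiv 1 \pmod 3$ and at least $4$), using up the extra $q^3$ that distinguishes $s(n-3)$ from $t(n)$. Concretely, given $(\alpha,\beta,\gamma)\in\mathcal{T}_1(n)$ with $\beta\neq\emptyset$, write $\beta=(\beta_1,\ldots,\beta_r)$ with $\beta_r\ge 7$. I would set $\pi=\alpha$, $\delta=\gamma$, and build $\mu$ from $\beta$ by replacing the smallest part $\beta_r$ with $\beta_r-3\ge 4$: then $\mu=(\beta_1,\ldots,\beta_{r-1},\beta_r-3)$ still has all parts $\equiv 1\pmod 3$ and $\ge 4$, and $|\mu|=|\beta|-3$, so $|\pi|+|\mu|+|\delta|=n-3$ as required. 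The point of choosing the \emph{smallest} part is that $\beta_r-3 < \beta_r \le \beta_{r-1}$, so in the resulting $\mu$ the value $\beta_r-3$ occurs with multiplicity exactly one (it is strictly smaller than everything else, since all parts of $\beta$ were $\ge 7$ and distinct-from-below only matters at the bottom); hence $(\pi,\mu,\delta)\in\mathcal{S}_1(n-3)$, the subset where the smallest part of $\mu$ is unique.

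For the inverse, take $(\pi,\mu,\delta)\in\mathcal{S}_1(n-3)$, so $\mu\neq\emptyset$ and its smallest part $\mu_s$ appears exactly once. Set $\alpha=\pi$, $\gamma=\delta$, and recover $\beta$ by adding $3$ back to the unique smallest part: $\beta=(\mu_1,\ldots,\mu_{s-1},\mu_s+3)$. Since $\mu_s\ge 4$ forces $\mu_s+3\ge 7$, and $\mu_s+3\le \mu_{s-1}$ precisely because $\mu_s$ was the \emph{unique} minimum (so $\mu_{s-1}\ge \mu_s+3$, as consecutive distinct values $\equiv 1\pmod 3$ differ by at least $3$), the sequence $\beta$ is a legitimate partition with all parts $\equiv 1\pmod 3$ and minimum $\ge 7$. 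This is manifestly inverse to $\phi_1$, so $\phi_1$ is a bijection.

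The one subtlety to check carefully is the boundary case $s=1$, i.e. $\mu$ has a single (necessarily unique) part $\mu_1\ge 4$: then $\beta=(\mu_1+3)$ is a valid one-part partition with part $\ge 7$, and conversely a one-part $\beta$ maps to a one-part $\mu$, so nothing breaks. The only place uniqueness could fail on the $\mathcal{T}$-side is if $\beta$ had a repeated smallest part, say $\beta_{r-1}=\beta_r$; but then $\mu=(\ldots,\beta_r,\beta_r-3)$ still has $\beta_r-3$ as its \emph{unique} new minimum, so the image is still in $\mathcal{S}_1(n-3)$ — the uniqueness condition in $\mathcal{S}_1$ concerns the value $\mu_s$, not $\beta_r$, and that value is created fresh by the subtraction. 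Thus I expect no real obstacle here; the bijection is a simple weight-shift, and the main care is just bookkeeping the inequalities $\mu_s\ge 4 \Leftrightarrow \beta_r\ge 7$ and the uniqueness-of-minimum equivalence under $\pm 3$.
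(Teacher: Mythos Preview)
Your proposal is correct and follows essentially the same approach as the paper: subtract $3$ from the smallest part of $\beta$ to form $\mu$, and add $3$ to the unique smallest part of $\mu$ for the inverse. Your justification of the inverse is in fact more explicit than the paper's, since you spell out why $\mu_{s-1}\ge \mu_s+3$ (distinct residues $\equiv 1\pmod 3$ differ by at least $3$), whereas the paper simply asserts the inverse lands in $\mathcal{T}_1(n)$.
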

\begin{proof}
	Let $(\alpha, \beta, \gamma)$ be a  partition triplet in $\mathcal{T}_1(n)$. By definition, we see that $\beta_{\ell(\beta)}\ge 7$.
	
	Define
\begin{equation*}
	(\pi,\mu,\delta)=\phi_1(\alpha,\beta,\gamma):=(\alpha,(\beta_1,\ldots,\beta_{\ell(\beta)-1},\beta_{\ell(\beta)}-3),\gamma).
\end{equation*}
Clearly
 $\mu_{\ell(\mu)-1}=\beta_{\ell(\beta)-1} > \beta_{\ell(\beta)}-3 =\mu_{\ell(\mu)}$, which implies the smallest part of $\mu$ is unique. Moreover
\begin{equation*}
	|\pi|+|\mu|+|\delta|=|\alpha|+|\beta|-3+|\gamma|=n-3.
\end{equation*}
Thus we get $(\pi,\mu,\delta)\in \mathcal{S}_1(n)$. To show $\phi_1$ is a bijection, we need to build the inverse map $\phi_1^{-1}$ from $\mathcal{S}_1(n-3)$ to $\mathcal{T}_1(n)$. Let $(\pi,\mu,\delta)$ be a partition triplet in $\mathcal{S}_1(n-3)$. Define $\phi_1^{-1}$ as follows.
\begin{equation*}
	(\alpha, \beta, \gamma)=\phi_1^{-1}(\pi,\mu,\delta):=(\pi,(\mu_1, \ldots, \mu_{\ell(\mu)-1},\mu_{\ell(\mu)}+3),\delta).
\end{equation*}
It is not difficult to check that $\phi_1^{-1}(\alpha, \beta, \gamma)\in \mathcal{T}_1(n)$ and $\phi_1^{-1}$ is the inverse map of $\phi_1$. This completes the proof.
\end{proof}
For example, let
\[(\alpha,\beta,\gamma)=(\emptyset,(10,10,7),(9,6))\in \mathcal{T}_1(42).\]
Applying $\phi_1$ on $(\alpha,\beta,\gamma)$, we get
\[
(\pi,\mu,\delta)=(\emptyset, (10,10,4),(9,6))\in \mathcal{S}_1(39).
\]
Applying $\phi_1^{-1}$ on $(\pi,\mu,\delta)$, we recover $(\alpha,\beta,\gamma)$.

Next we describe the bijection $\phi_2$ between $\mathcal{T}_2(n)$ and $\mathcal{S}_2(n-3)$.
\begin{lem}\label{thm1-lem-3}
	There is a bijection $\phi_2$ between $\mathcal{T}_2(n)$ and $\mathcal{S}_2(n-3)$.
\end{lem}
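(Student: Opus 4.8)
The plan is to take $\phi_2$ to be the operation that removes $3$ from the largest part of $\alpha$. Concretely, given $(\alpha,\emptyset,\gamma)\in\mathcal{T}_2(n)$ I would set
\[
\phi_2(\alpha,\emptyset,\gamma):=\bigl((\alpha_1-3,\alpha_2,\ldots,\alpha_{\ell(\alpha)}),\,\emptyset,\,\gamma\bigr),
\]
with the convention that when $\ell(\alpha)=1$ this reads $((\alpha_1-3),\emptyset,\gamma)$. The first thing to verify is that the output $(\pi,\mu,\delta)$ really lies in $\mathcal{S}_2(n-3)$. Since $\alpha_1\equiv 2\pmod 3$, also $\alpha_1-3\equiv 2\pmod 3$, so every part of $\pi=(\alpha_1-3,\alpha_2,\ldots)$ is congruent to $2$ modulo $3$; since $\alpha_1\ge 5$ we have $\alpha_1-3\ge 2$, so $\pi\neq\emptyset$ and $\pi_1\ge 2$; and since $\alpha_1>\alpha_2$ with both $\equiv 2\pmod 3$ forces $\alpha_1\ge\alpha_2+3$, we get $\alpha_1-3\ge\alpha_2$, so $\pi$ is weakly decreasing. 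As $\mu=\emptyset$ and $\delta=\gamma$ plainly meet the conditions defining $\mathcal{S}$, and $|\pi|+|\mu|+|\delta|=|\alpha|-3+|\gamma|=n-3$, indeed $(\pi,\mu,\delta)\in\mathcal{S}_2(n-3)$.

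Next I would produce the inverse by the reverse operation: given $(\pi,\emptyset,\delta)\in\mathcal{S}_2(n-3)$ — so $\pi_1\ge 2$, in particular $\pi\neq\emptyset$ — set
\[
\phi_2^{-1}(\pi,\emptyset,\delta):=\bigl((\pi_1+3,\pi_2,\ldots,\pi_{\ell(\pi)}),\,\emptyset,\,\delta\bigr).
\]
One checks that $\pi_1+3\equiv 2\pmod 3$ and $\pi_1+3\ge 5$, while $\pi_1\ge\pi_2$ gives $\pi_1+3>\pi_2$; hence the first component is a partition with all parts $\equiv 2\pmod 3$, with a strict largest part, and with largest part at least $5$. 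Together with the empty second component and $\gamma=\delta$, this puts the image in $\mathcal{T}_2(n)$, and the weight goes back up to $(n-3)+3=n$. A direct computation shows $\phi_2\circ\phi_2^{-1}=\mathrm{id}$ and $\phi_2^{-1}\circ\phi_2=\mathrm{id}$, so $\phi_2$ is a bijection, as claimed.

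I do not expect any real difficulty here. The single point worth isolating is the elementary fact that for positive integers all congruent to $2$ modulo $3$, the strict inequality $\alpha_1>\alpha_2$ is equivalent to the gap inequality $\alpha_1-\alpha_2\ge 3$; this is precisely what makes subtracting $3$ from (or adding $3$ to) the largest part preserve the partition shape in both directions. The degenerate case $\ell(\alpha)=1$ should be mentioned explicitly, reading $\alpha_2=0$, but there every requirement is immediate.
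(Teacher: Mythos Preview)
Your proposal is correct and is essentially identical to the paper's proof: both define $\phi_2$ by subtracting $3$ from the largest part of $\alpha$ and $\phi_2^{-1}$ by adding $3$ back. You even supply the one detail the paper leaves as ``routine,'' namely that $\alpha_1>\alpha_2$ together with both parts being $\equiv 2\pmod 3$ forces $\alpha_1-3\ge\alpha_2$, so the resulting sequence is still a partition.
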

\begin{proof}
Given $(\alpha,\beta,\gamma)\in \mathcal{T}_2(n)$, by definition, $\beta=\emptyset$, $\alpha_1>\alpha_2$ and $\alpha_1\ge 5$. Define
\begin{equation*}
	(\pi,\mu,\delta)=\phi_2(\alpha,\beta,\gamma):=((\alpha_1-3,\alpha_2\ldots,\alpha_{\ell(\alpha)}),\emptyset,\gamma).
\end{equation*}
It is routine to check that $(\pi,\mu,\delta)\in \mathcal{S}_2(n-3)$.

Next we verify that $\phi_2$ is a bijection. For any $(\pi,\mu,\delta)\in \mathcal{S}_2(n-3)$, by definition $\mu=\emptyset.$ Define the inverse map $\phi_2^{-1}$ as follows.
\begin{equation*}
	(\alpha,\beta,\gamma)=\phi_2^{-1}(\pi,\mu,\delta):=((\pi_1+3,\pi_2,\ldots,\pi_{\ell(\pi)}),\emptyset, \delta).
\end{equation*}
It is easy to check that $(\alpha,\beta,\gamma)\in \mathcal{T}_2(n)$, and $\phi_2^{-1}$ is indeed the inverse map of $\phi_2$. Thus we deduce $\phi_2$ is a bijection.
\end{proof}
For instance, let
\[(\alpha,\beta,\gamma)=((5,2,2),\emptyset,(9))\in \mathcal{T}_2(18).\]
Applying $\phi_2$ on $(\alpha,\beta,\gamma)$, we derive
\[
(\pi,\mu,\delta)=((2,2,2), \emptyset,(9))\in \mathcal{S}_2(15).
\]
Applying $\phi_2^{-1}$ on $(\pi,\mu,\delta)$, we recover $(\alpha,\beta,\gamma)$.

We proceed to describe the bijection $\phi_3$.
\begin{lem}\label{thm1-lem-4}
	There is a bijection $\phi_3$ between $\mathcal{T}_3(n)$ and $\mathcal{S}_3(n-3)$.
\end{lem}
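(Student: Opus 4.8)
The plan is to build $\phi_3$ by the same ``strip the two equal leading parts'' device used for $\phi_2$, but now the stripped value is recorded in the single free part of $\mu$. Given $(\alpha,\beta,\gamma)\in\mathcal{T}_3(n)$, so that $\beta=\emptyset$ and $a:=\alpha_1=\alpha_2\ge 11$ with $a\equiv 2\pmod 3$, I would set
\[
\phi_3(\alpha,\beta,\gamma):=\bigl((\alpha_3,\alpha_4,\ldots,\alpha_{\ell(\alpha)}),\,(2a-15,4,4,4),\,\gamma\bigr)=:(\pi,\mu,\delta).
\]
First I would verify $(\pi,\mu,\delta)\in\mathcal{S}_3(n-3)$. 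The weight bookkeeping is immediate: $|\pi|+|\mu|+|\delta|=(|\alpha|-2a)+(2a-3)+|\gamma|=n-3$. Since $a\equiv 2\pmod 3$ we get $2a-15\equiv 1\pmod 3$, and $2a-15$ is odd for every integer $a$; because $a\ge 11$ we have $2a-15\ge 7\ge 4$. Hence $\mu=(2a-15,4,4,4)$ has all parts $\equiv 1\pmod 3$ with least part $4$, and conditions (i), (iii), and the shape requirement hold automatically. For condition (ii), $\mu_1=2a-15\ge 7$ is clear, and $\pi_1=\alpha_3\le\alpha_2=a$ gives $2\pi_1-15\le 2a-15=\mu_1$ (with the convention $\pi_1=0$ when $\pi=\emptyset$); thus $\mu_1\ge\max\{2\pi_1-15,7\}$. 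Finally $\pi$ still has all parts $\equiv 2\pmod 3$ and $\delta=\gamma$ has parts in $\{6,9\}$, so $(\pi,\mu,\delta)\in\mathcal{S}(n-3)$.

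For the inverse, given $(\pi,\mu,\delta)\in\mathcal{S}_3(n-3)$ with $\mu=(\mu_1,4,4,4)$, I would put $a:=(\mu_1+15)/2$. This is an integer because $\mu_1$ is odd, it satisfies $a\equiv 2\pmod 3$ because $\mu_1\equiv 1\pmod 3$ (so $\mu_1\in\{7,13,19,\ldots\}$ and correspondingly $a\in\{11,14,17,\ldots\}$), and $\mu_1\ge 7$ forces $a\ge 11$. Then define
\[
\phi_3^{-1}(\pi,\mu,\delta):=\bigl((a,a,\pi_1,\pi_2,\ldots,\pi_{\ell(\pi)}),\,\emptyset,\,\delta\bigr).
\]
Condition (ii) yields $2a-15=\mu_1\ge 2\pi_1-15$, i.e. $a\ge\pi_1$, so $(a,a,\pi_1,\ldots,\pi_{\ell(\pi)})$ is a genuine partition whose first two parts equal $a\ge 11$ and all of whose parts are $\equiv 2\pmod 3$; together with $\delta$ having parts in $\{6,9\}$ and the (reverse) weight computation, the image lies in $\mathcal{T}_3(n)$. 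It is then clear from the explicit formulas that $\phi_3\circ\phi_3^{-1}$ and $\phi_3^{-1}\circ\phi_3$ are the respective identities, which finishes the lemma.

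I expect this argument to be short and essentially routine; the only genuinely nontrivial point — and the reason the definition of $\mathcal{S}_3$ is shaped as it is — is that the lone variable part $\mu_1$ of $\mu$ must do two jobs at once: it must \emph{encode} the value $a=\alpha_1=\alpha_2$, which pins down $\mu_1=2a-15$ and makes the parity condition (iii) and the congruence $\mu_1\equiv 1\pmod 3$ automatic, and it must \emph{remember} that $a$ dominated the remaining parts of $\alpha$, which is precisely the inequality (ii). The main care needed in the write-up is therefore checking these residue/parity equivalences in both directions and fixing a convention for $\pi_1$ (respectively $\alpha_3$) when the relevant partition has fewer than the expected number of parts, since $\mathcal{T}_3$ only guarantees $\ell(\alpha)\ge 2$.
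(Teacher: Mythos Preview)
Your construction is exactly the paper's: writing $\alpha_1=\alpha_2=a$ (the paper parametrizes $a=3k+2$), both define $\phi_3(\alpha,\beta,\gamma)=\bigl((\alpha_3,\ldots,\alpha_{\ell(\alpha)}),(2a-15,4,4,4),\gamma\bigr)$ and the inverse via $a=(\mu_1+15)/2$, with the same weight, parity, congruence, and dominance checks. Your write-up is correct and in fact slightly more careful than the paper's in spelling out the residue equivalences and the $\pi=\emptyset$ convention.
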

\begin{proof}
	Given $(\alpha,\beta,\gamma)\in \mathcal{T}_3(n)$. From the definition of $\mathcal{T}_3(n)$, we find that $\alpha_1=\alpha_2\ge 11$ and $\beta=\emptyset$. {Assuming} that $\alpha_1=3k+2$, from $\alpha_1\ge 11$ we find that $k\ge 3$. The map $\phi_3$ can be defined as follows:
	\begin{equation}
		(\pi,\mu,\delta)=\phi_3(\alpha,\beta,\gamma)
		:=((\alpha_3, \ldots, \alpha_{\ell(\alpha)}), (6k-11, 4, 4, 4), \gamma).
	\end{equation}
	Since $k\ge 3$, we get $\mu_1=6k-11\ge 7$ and $\mu_1$ {is} odd. Moreover, from $\pi_1=\alpha_3\le 3k+2$, we find that $\mu_1=6k-11=2(3k+2)-15\ge 2\pi_1-15$. Furthermore,
	\begin{equation}
|\pi|+|\mu|+|\delta|=|\alpha|-\alpha_1-\alpha_2+6k-11+12+|\gamma|=|\alpha|-3+|\gamma|=n-3.
	\end{equation}
	Thus we deduce that $(\pi,\mu,\delta)\in \mathcal{S}_3(n-3)$. Conversely, given $(\pi,\mu,\delta)\in \mathcal{S}_3(n-3)$ where $\ell(\mu)=4$, $\mu_1\ge 7$ odd,  $\mu_2=\mu_3=\mu_4=4$ and $\mu_1\ge 2\pi_1-15$. We define $\phi_3^{-1}$ as follows.
	\begin{equation*}
		(\alpha, \beta, \gamma)=\phi_3^{-1}(\pi,\mu,\delta):=((\frac{\mu_1+15}{2}, \frac{\mu_1+15}{2}, \pi_1, \ldots, \pi_{\ell(\pi)}),\emptyset,\delta).
	\end{equation*}
Clearly $\alpha_1=\alpha_2=\frac{\mu_1+15}{2}\ge\pi_1 =\alpha_3$ and $\alpha_1\ge 11$. Moreover, we have $|\alpha|+|\beta|+|\gamma|=|\pi|+|\mu|+|\delta|+15-12=n-3+3=n$. Thus $(\alpha,\beta,\gamma)\in \mathcal{T}_3(n)$ and it can be checked that $\phi_3^{-1}$ is the inverse map of $\phi_3$,  which implies $\phi_3$ is a bijection.
\end{proof}
For example, let
\[(\alpha,\beta,\gamma)=((11,11,5),\emptyset,(6))\in \mathcal{T}_3(33).\]
Applying $\phi_3$ on $(\alpha,\beta,\gamma)$, we get
\[
(\pi,\mu,\delta)=((5), (7,4,4,4),(6))\in \mathcal{S}_3(30).
\]
Applying $\phi_3^{-1}$ on $(\pi,\mu,\delta)$, we recover $(\alpha,\beta,\gamma)$.

We next present the bijection $\phi_4$ between $\mathcal{T}_4(n)$ and $\mathcal{S}_4(n-3)$.
\begin{lem}\label{thm1-lem-5}
	There is a bijection $\phi_4$ between $\mathcal{T}_4(n)$ and $\mathcal{S}_4(n-3)$.
\end{lem}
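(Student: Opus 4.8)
The plan is to construct $\phi_4$ in the same spirit as the earlier bijections $\phi_2$ and $\phi_3$: remove a fixed block of parts from one component of the triple, redistribute weight among the three components so that the total drops by exactly $3$, and keep every part in its prescribed residue class. A triple $(\alpha,\beta,\gamma)\in\mathcal{T}_4(n)$ has $\beta=\emptyset$ and $\alpha=(8,8,\alpha_3,\dots,\alpha_{\ell(\alpha)})$ with every part $\equiv 2\pmod 3$ and $\alpha_3\le 8$, so in fact $\alpha_3,\dots,\alpha_{\ell(\alpha)}\in\{2,5,8\}$. I would let $\pi$ be the partition obtained from $\alpha$ by deleting the two parts equal to $8$ and then inserting one new part equal to $5$, and set
\[
(\pi,\mu,\delta)=\phi_4(\alpha,\beta,\gamma):=\bigl(\pi,\,(4,4),\,\gamma\bigr).
\]
Every part of $\pi$ lies in $\{2,5,8\}$, so $\pi$ has all parts $\equiv 2\pmod 3$ and $\pi_1\le 8$, while $5$ is a part of $\pi$ and $\mu=(4,4)$ has parts $\equiv 1\pmod 3$ with smallest part $4$; hence $(\pi,\mu,\delta)\in\mathcal{S}(n-3)$. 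Since $|\pi|+|\mu|+|\delta|=(|\alpha|-16+5)+8+|\gamma|=|\alpha|+|\gamma|-3=n-3$, we obtain $(\pi,\mu,\delta)\in\mathcal{S}_4(n-3)$.

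For the inverse I would argue as follows. Given $(\pi,\mu,\delta)\in\mathcal{S}_4(n-3)$ — so $\mu=(4,4)$, $\pi_1\le 8$, and $5$ occurs among the parts of $\pi$ — delete one part equal to $5$ from $\pi$, prepend two parts equal to $8$ to the result to form $\alpha$, and set $\beta=\emptyset$, $\gamma=\delta$. Because the largest part remaining after deleting a $5$ is still at most $8$, the sequence $\alpha$ is a genuine partition with $\alpha_1=\alpha_2=8$ and all parts $\equiv 2\pmod 3$, so $(\alpha,\beta,\gamma)\in\mathcal{T}_4(n)$, with the weight computation run backwards. That $\phi_4$ and this map are mutually inverse is then immediate, since deleting the freshly inserted $5$ recovers exactly the multiset $\{\alpha_3,\dots,\alpha_{\ell(\alpha)}\}$ and, conversely, inserting $5$ into the result of removing one $5$ from $\pi$ restores $\pi$.

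I do not expect a genuine obstacle here. The only points that need a careful word are that the ``insert/delete a part equal to $5$'' operations must be read as multiset operations, so that they remain inverse to each other even when $\pi$ (equivalently, the tail of $\alpha$) already contains other parts equal to $5$, and that $\pi_1\le 8$ in the forward direction follows from $\alpha_3\le 8$ while deleting a $5$ cannot increase $\pi_1$, so $8$ really is the maximum part in the inverse direction. I would finish with an explicit example, for instance $(\alpha,\beta,\gamma)=((8,8,5,2),\emptyset,(6))\in\mathcal{T}_4(29)$ mapping under $\phi_4$ to $((5,5,2),(4,4),(6))\in\mathcal{S}_4(26)$, and applying $\phi_4^{-1}$ to recover it.
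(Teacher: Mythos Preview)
Your proposal is correct and is essentially the same construction as the paper's: the paper also removes the two leading $8$'s from $\alpha$, inserts a single $5$ into the remaining parts (they write the insertion position explicitly as just after the last part $\ge 5$), sets $\mu=(4,4)$ and $\delta=\gamma$, and inverts by deleting one $5$ and prepending two $8$'s. Your phrasing of insert/delete as multiset operations is exactly the right way to make the inverse well-defined, and your example matches the paper's in spirit (they use $\alpha=(8,8,8,2)$).
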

\begin{proof}
	Given $(\alpha, \beta, \gamma) \in \mathcal{T}_4(n)$, by definition we have $\alpha_1=\alpha_2=8$ and $\beta=\emptyset$. Let $k$ be the maximum integer such that $\alpha_k\ge 5$. Clearly   $k\ge 2$. Define $\phi_4$ as follows
	\begin{equation*}
	(\pi,\mu,\delta)=\phi_4(\alpha, \beta, \gamma):=((\alpha_3,\ldots,\alpha_k,5,\alpha_{k+1},\ldots,\alpha_{\ell(\alpha)}), (4,4),\gamma).
	\end{equation*}
	It is easy to check that $|\pi|+|\mu|+|\delta|=|\alpha|-\alpha_1-\alpha_2+ 5+8+|\gamma|=|\alpha|+|\beta|+|\gamma|-3=n-3$. Thus $(\pi,\mu,\delta)\in \mathcal{S}_4(n-3)$. Conversely, given $(\pi,\mu,\delta)\in \mathcal{S}_4(n-3)$, by definition, $\mu=(4,4)$ and there exists $t$ such that $\pi_t=5$.   Define the inverse map $\phi_4^{-1}$ as follows.
	\begin{equation*}
		(\alpha, \beta, \gamma)=\phi_4^{-1}(\pi,\mu,\delta):=((8,8,\pi_1,\ldots,\pi_{t-1},\pi_{t+1},\ldots,\pi_{\ell(\pi)}),\emptyset,\gamma).
	\end{equation*}
	It is easy to check that $|\alpha|+|\beta|+|\gamma|=|\pi|-5+16+|\mu|-8+|\delta|=n-3+3=n$ and $(\alpha, \beta, \gamma)\in \mathcal{T}_4(n)$. Moreover $\phi_4^{-1}$ is the inverse map of $\phi_4$. Thus $\phi_4$ is a bijection.
\end{proof}
	For instance, let
	\[(\alpha,\beta,\gamma)=((8,8,8,2),\emptyset,(6))\in \mathcal{T}_4(32).\]
	Applying $\phi_4$ on $(\alpha,\beta,\gamma)$, we get
	\[
	(\pi,\mu,\delta)=((8,5,2), (4,4),(6))\in \mathcal{S}_4(29).
	\]
	Applying $\phi_4^{-1}$ on $(\pi,\mu,\delta)$, we recover $(\alpha,\beta,\gamma)$.

We next establish the bijection $\phi_5$ between $\mathcal{T}_5(n)$ and $\mathcal{S}_5(n-3)$.
\begin{lem}\label{thm1-lem-6}
There is a bijection $\phi_5$ between $\mathcal{T}_5(n)$ and $\mathcal{S}_5(n-3)$.
\end{lem}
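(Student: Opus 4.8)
The plan is to observe that, once the constraint $\beta=\emptyset$ is in force, the descriptions of $\mathcal{T}_5(n)$ and $\mathcal{S}_5(n-3)$ agree verbatim on the first component (both ask for ``$\alpha_1=\alpha_2\le 5$ or $\alpha=(2)$'', resp.\ ``$\pi_1=\pi_2\le 5$ or $\pi=(2)$'') and differ only in two fixed, weight-shifting features: the middle component is empty in $\mathcal{T}_5(n)$ but equals the fixed partition $(7,7,7)$ in $\mathcal{S}_5(n-3)$, while the third component is forced to satisfy $f_\gamma(6)\ge 4$ in $\mathcal{T}_5(n)$ but is otherwise unconstrained in $\mathcal{S}_5(n-3)$. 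Since $|(7,7,7)|=21$ whereas four copies of $6$ weigh $24$, trading four $6$'s for the block $(7,7,7)$ changes the total weight by exactly $21-24=-3$, which is precisely the shift $n\mapsto n-3$; this pins down the shape of $\phi_5$.

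Concretely, given $(\alpha,\beta,\gamma)\in\mathcal{T}_5(n)$, write $\gamma=(9^a,6^b)$ with $b\ge 4$, and define
\[
\phi_5(\alpha,\beta,\gamma):=\bigl(\alpha,\,(7,7,7),\,(9^a,6^{\,b-4})\bigr).
\]
First I would check that this triplet lies in $\mathcal{S}(n-3)$: the first component still has all parts congruent to $2$ modulo $3$; the middle component $(7,7,7)$ has all parts congruent to $1$ modulo $3$ with smallest part $7\ge 4$; the last component has all parts equal to $6$ or $9$; and $|\alpha|+21+(|\gamma|-24)=n-3$. Since the first-component condition is inherited unchanged from $\mathcal{T}_5(n)$ and $\mu=(7,7,7)$ by construction, the output in fact lies in $\mathcal{S}_5(n-3)$. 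For the reverse direction, given $(\pi,\mu,\delta)\in\mathcal{S}_5(n-3)$ with $\mu=(7,7,7)$ and $\delta=(9^a,6^c)$ for some $c\ge 0$, I would set
\[
\phi_5^{-1}(\pi,\mu,\delta):=\bigl(\pi,\,\emptyset,\,(9^a,6^{\,c+4})\bigr),
\]
whose third component has $f_\gamma(6)=c+4\ge 4$; a quick check shows that the weight returns to $n$ and the first-component condition is preserved, so the image lies in $\mathcal{T}_5(n)$, and $\phi_5^{-1}\circ\phi_5$ and $\phi_5\circ\phi_5^{-1}$ are manifestly the identity.

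I do not anticipate a genuine obstacle here: the only point requiring a sentence of justification is that the bound $f_\gamma(6)\ge 4$ built into $\mathcal{T}_5(n)$ is exactly what makes the deletion of four $6$'s well defined, while the absence of any lower bound on $f_\delta(6)$ in $\mathcal{S}_5(n-3)$ is exactly what makes the reinsertion of four $6$'s land back in $\mathcal{T}_5(n)$, so the two maps really are mutually inverse. Following the pattern of the earlier lemmas, I would finish with a worked example, e.g.\ $((5,5,2),\emptyset,(9,6,6,6,6,6))\in\mathcal{T}_5(51)$ maps under $\phi_5$ to $((5,5,2),(7,7,7),(9,6))\in\mathcal{S}_5(48)$, and $\phi_5^{-1}$ recovers it.
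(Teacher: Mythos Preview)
Your proposal is correct and is essentially the same bijection as the paper's: both remove four copies of $6$ from $\gamma$ and insert $(7,7,7)$ as the middle component, with the inverse doing the opposite. The only cosmetic difference is that you write $\gamma=(9^a,6^b)$ in multiplicity notation while the paper deletes the last four parts $\gamma_{\ell(\gamma)-3},\dots,\gamma_{\ell(\gamma)}$ directly, which amounts to the same operation since those parts are all $6$'s.
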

\begin{proof}
	Given $(\alpha,\beta,\gamma)\in\mathcal{T}_5(n)$, by definition we have  $\beta=\emptyset$, $\gamma_{\ell(\gamma)-3}=\gamma_{\ell(\gamma)-2}=\gamma_{\ell(\gamma)-1}=\gamma_{\ell(\gamma)}=6$ and either $\alpha_1=\alpha_2\le 5$ or $\alpha=(2)$. We define $\phi_5$ as follows.
	\begin{equation*}
		(\pi,\mu,\delta)=\phi_5(\alpha,\beta,\gamma):=(\alpha, (7,7,7), (\gamma_1,\ldots,\gamma_{\ell(\gamma)-4})).		
	\end{equation*}
Clearly,  $\mu=(7,7,7)$ and either $\pi_1=\pi_2\le 5$ or $\pi=(2)$. Moreover, $|\pi|+|\mu|+|\delta|=|\alpha|+21+|\gamma|- 24=|\alpha|+|\gamma|-3=n-3$. Thus $(\pi,\mu,\delta)\in \mathcal{S}_5(n-3)$.
	
	Next we verify that $\phi_5$ is a bijection. Given $(\pi,\mu,\delta)\in \mathcal{S}_5(n-3)$, we see that $\mu=(7,7,7)$ and either $\pi_1=\pi_2\le 5$ or $\pi=(2)$. The map $\phi_5^{-1}$ can be defined as follows.
	\begin{equation*}
	(\alpha,\beta,\gamma)=\phi_5^{-1}(\pi,\mu,\delta):=(\pi, \emptyset, (\delta_1,\ldots,\delta_{\ell(\delta)},6,6,6,6)).
	\end{equation*}
Moreover, it is not difficult to check that $|\alpha|+|\beta|+|\gamma|=|\pi|-21+|\delta|+24=|\pi|+|\mu|+|\delta|+3=n-3+3=n$, $(\alpha,\beta,\gamma)\in \mathcal{T}_5(n)$ and $\phi_5^{-1}$ is the inverse map of $\phi_5$. This implies $\phi_5$ is a bijection.
\end{proof}
For example, let
\[(\alpha,\beta,\gamma)=((5,5,2),\emptyset,(9,6,6,6,6))\in \mathcal{T}_5(45).\]
Applying $\phi_5$ on $(\alpha,\beta,\gamma)$, we get
\[
(\pi,\mu,\delta)=((5,5,2), (7,7,7),(9))\in \mathcal{S}_5(42).
\]
Applying $\phi_5^{-1}$ on $(\pi,\mu,\delta)$, we recover $(\alpha,\beta,\gamma)$.

We next describe the bijection $\phi_6$ between $\mathcal{T}_6(n)$ and $\mathcal{S}_6(n-3)$.
\begin{lem}\label{thm1-lem-7}
	There is a bijection $\phi_6$ between $\mathcal{T}_6(n)$ and $\mathcal{S}_6(n-3)$.
\end{lem}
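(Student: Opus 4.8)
The plan is to realize $\phi_6$ as the map that ``trades'' two parts equal to $9$ in $\gamma$ for the fixed triple $(7,4,4)$ placed into the currently empty middle coordinate. The whole construction rests on the numerical coincidence $2\cdot 9-(7+4+4)=18-15=3$: deleting two $9$'s from $\gamma$ while inserting the partition $(7,4,4)$ into the $\beta$/$\mu$ slot lowers the total weight by exactly $3$, which is precisely the shift between $\mathcal{T}(n)$ and $\mathcal{S}(n-3)$. Concretely, given $(\alpha,\beta,\gamma)\in\mathcal{T}_6(n)$ we have $\beta=\emptyset$, $\gamma=(9^{a},6^{b})$ with $a=f_\gamma(9)\ge 2$ and $b=f_\gamma(6)\le 3$, and either $\alpha_1=\alpha_2\le 5$ or $\alpha=(2)$; I would set
\[
(\pi,\mu,\delta)=\phi_6(\alpha,\beta,\gamma):=\bigl(\alpha,\ (7,4,4),\ (9^{a-2},6^{b})\bigr),
\]
and, for $(\pi,\mu,\delta)\in\mathcal{S}_6(n-3)$ (so $\mu=(7,4,4)$, $\delta=(9^{a'},6^{b'})$ with $b'=f_\delta(6)\le 3$, and the same shape condition on $\pi$),
\[
(\alpha,\beta,\gamma)=\phi_6^{-1}(\pi,\mu,\delta):=\bigl(\pi,\ \emptyset,\ (9^{a'+2},6^{b'})\bigr).
\]

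Then the verification is three short checks. First, $\phi_6$ lands in $\mathcal{S}_6(n-3)$: the condition ``$\pi_1=\pi_2\le 5$ or $\pi=(2)$'' is inherited verbatim because $\pi=\alpha$; $(7,4,4)$ is a legal $\mu$ since all its parts are $\equiv 1\pmod 3$ and $\ge 4$; $f_\delta(6)=b=f_\gamma(6)\le 3$; and $|\pi|+|\mu|+|\delta|=|\alpha|+15+\bigl(9(a-2)+6b\bigr)=|\alpha|+9a+6b-3=|\alpha|+|\beta|+|\gamma|-3=n-3$. Second, $\phi_6^{-1}$ lands in $\mathcal{T}_6(n)$: again $\alpha=\pi$ carries the shape condition and $\beta=\emptyset$; $\gamma=(9^{a'+2},6^{b'})$ has $f_\gamma(9)=a'+2\ge 2$ and $f_\gamma(6)=b'\le 3$; and $|\alpha|+|\beta|+|\gamma|=|\pi|+\bigl(9(a'+2)+6b'\bigr)=|\pi|+9a'+6b'+18=|\pi|+|\delta|+18=|\pi|+|\mu|+|\delta|+3=n$. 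Third, $\phi_6$ and $\phi_6^{-1}$ are mutually inverse, which is immediate since neither touches $\alpha=\pi$ and the $\gamma\leftrightarrow\delta$ steps simply add or remove two copies of $9$.

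The ``obstacle'', such as it is, is purely bookkeeping: one must note that the hypothesis $f_\gamma(9)\ge 2$ built into $\mathcal{T}_6(n)$ is exactly what makes $\phi_6$ well defined (we need to be able to delete two $9$'s), that $\mathcal{S}_6(n-3)$ places no constraint on $f_\delta(9)$ or on $\ell(\delta)$ that $(9^{a-2},6^{b})$ could violate (the boundary cases $a=2$, giving a $\delta$ with no $9$'s, and $\alpha=\emptyset$ under the convention $\alpha_i=0$ for $i>\ell(\alpha)$, cause no trouble), and that the weight computations use $\beta=\emptyset$. I would finish with an illustrative example such as $(\alpha,\beta,\gamma)=((5,5),\emptyset,(9,9,6))\in\mathcal{T}_6(34)$, for which $\phi_6$ gives $(\pi,\mu,\delta)=((5,5),(7,4,4),(6))\in\mathcal{S}_6(31)$, and $\phi_6^{-1}$ recovers $(\alpha,\beta,\gamma)$.
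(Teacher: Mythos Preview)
Your proposal is correct and essentially identical to the paper's own proof: both define $\phi_6$ by deleting two copies of $9$ from $\gamma$ (the paper phrases this as dropping $\gamma_1=\gamma_2=9$) and inserting $(7,4,4)$ as $\mu$, with $\alpha=\pi$ untouched, and both verify the weight shift $18-15=3$ and the constraint-preservation in the same way. Your write-up is, if anything, slightly more explicit about the boundary cases.
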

\begin{proof}
Given $(\alpha,\beta,\gamma) \in \mathcal{T}_6(n)$ where $\beta=\emptyset$,  $\gamma_1=\gamma_2=9$ and either $\alpha_1=\alpha_2\le 5$ or $\alpha=(2)$. The map $\phi_6$ can be defined as follows.
\begin{equation*}
(\pi,\mu,\delta)=\phi_6(\alpha,\beta,\gamma):=(\alpha,(7,4,4),(\gamma_3,\ldots,\gamma_{\ell(\gamma)})).
\end{equation*}
It is easy to see that $|\pi|+|\mu|+|\delta|=|\alpha|+|\beta|+|\gamma|-18+15=|\alpha|+|\beta|+|\gamma|-3=n-3$. Thus $(\pi,\mu,\delta)\in \mathcal{S}_6(n-3)$. We next show that $\phi_6$ is a bijection. To this end, we describe the inverse map $\phi_6^{-1}$ as follows. Given $(\pi,\mu,\delta)\in \mathcal{S}_6(n-3)$, by definition, $\mu=(7,4,4)$, $f_\delta(6)\le 3$ and either $\pi_1=\pi_2\le 5$ or $\pi=(2)$. Define the map $\phi_6^{-1}$ as given below.
\begin{equation*}
	(\alpha,\beta,\gamma)=\phi_6^{-1}(\pi,\mu,\delta):=(\pi,\emptyset,(9,9,\delta_1,\ldots,\delta_{\ell(\delta)})).
\end{equation*}
Clearly $\beta=\emptyset$. Moreover, $\gamma_1=\gamma_2=9$ and  $|\alpha|+|\beta|+|\gamma|=|\pi|+|\mu|+|\delta|+18-15=n-3+3=n$. Thus we have $(\alpha,\beta,\gamma)\in \mathcal{T}_6(n)$. It is routine to verify that $\phi_{6}^{-1}$ is the inverse map of $\phi_6$. Thus $\phi_6$ is a bijection.
\end{proof}
For example, let
\[(\alpha,\beta,\gamma)=((2),\emptyset,(9,9,9,6))\in \mathcal{T}_6(35).\]
Applying $\phi_6$ on $(\alpha,\beta,\gamma)$, we derive
\[
(\pi,\mu,\delta)=((2), (7,4,4),(9,6))\in \mathcal{S}_6(32).
\]
Applying $\phi_6^{-1}$ on $(\pi,\mu,\delta)$, we recover $(\alpha,\beta,\gamma)$.

We proceed to prove that $\# \mathcal{T}_7(n)\le \# \mathcal{S}_7(n-3)$ for $n\ge 152$. To this end, we first prove the following two lemmas.

\begin{lem}\label{lem-main1-axby}
	For any positive integer $a,b$ such that $(a,b)=1$, let $t_{a,b}(n)$ denote the number of non-negative integer solutions of $ax+by=n$. Then $t_{a,b}(n)$ satisfies the following inequality:	
	\begin{equation}\label{equ-lem-abn}
		\left\lfloor\frac{\left\lfloor\frac{n}{a}\right\rfloor+1}{b}\right\rfloor\le	t_{a,b}(n)\le \left\lceil\frac{\left\lfloor\frac{n}{a}\right\rfloor+1}{b}\right\rceil.
	\end{equation}
\end{lem}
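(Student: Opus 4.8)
The plan is to reduce the count to a one–variable congruence problem. First I would parametrize a non-negative solution $(x,y)$ of $ax+by=n$ by its first coordinate $x$: such an $x$ must satisfy $0\le x\le\lfloor n/a\rfloor$ (so that $y=(n-ax)/b$ is non-negative) and $ax\equiv n\pmod b$ (so that $y$ is an integer), and conversely every such $x$ determines a unique valid $y$. Hence, writing $M:=\lfloor n/a\rfloor+1$ for the number of candidate values $x\in\{0,1,\dots,M-1\}$, we have
$$t_{a,b}(n)=\#\bigl\{x:0\le x\le M-1,\ ax\equiv n\pmod b\bigr\}.$$

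Next I would use the hypothesis $\gcd(a,b)=1$: multiplication by $a$ is a bijection of $\mathbb{Z}/b\mathbb{Z}$, so the congruence $ax\equiv n\pmod b$ picks out exactly one residue class $r$ modulo $b$, with $0\le r<b$. Therefore $t_{a,b}(n)$ is precisely the number of elements of the block $\{0,1,\dots,M-1\}$ of $M$ consecutive integers that lie in the prescribed residue class $r$ modulo $b$. It is then a standard elementary fact that this number is either $\lfloor M/b\rfloor$ or $\lceil M/b\rceil$: writing $M=qb+s$ with $0\le s<b$, a direct inspection of the arithmetic progression $r,r+b,r+2b,\dots$ shows the count equals $q+1$ for the $s$ residues $r\in\{0,\dots,s-1\}$ and equals $q$ for the remaining $b-s$ residues (and equals $q$ for every $r$ when $s=0$). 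Since $\lfloor M/b\rfloor=q$ and $\lceil M/b\rceil=q+1$ when $s\ge 1$ (and $=q$ when $s=0$), the count always lies between $\lfloor M/b\rfloor$ and $\lceil M/b\rceil$; substituting $M=\lfloor n/a\rfloor+1$ yields \eqref{equ-lem-abn}.

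The argument involves no genuine obstacle; the only points needing a little care are the degenerate cases — when $s=0$ the two bounds coincide, and for small $n$ one may have $M=0$, in which case $t_{a,b}(n)=0$ and both bounds equal $0$ — but these are immediate to check, and the whole proof reduces to bookkeeping with floors and ceilings.
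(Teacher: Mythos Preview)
Your proof is correct and follows essentially the same approach as the paper's: both parametrize solutions by $x\in\{0,1,\dots,\lfloor n/a\rfloor\}$, use $\gcd(a,b)=1$ to conclude that exactly one $x$ in every block of $b$ consecutive integers satisfies the required congruence, and read off the floor/ceiling bounds. Your write-up is somewhat more explicit in the residue-class bookkeeping; one harmless over-caution is the remark that $M$ could be $0$, which cannot occur for $n\ge 0$ since $M=\lfloor n/a\rfloor+1\ge 1$.
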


\begin{proof}
	Clearly, $0\le x\le \left\lfloor\dfrac{n}{a}\right\rfloor$, which means there are $\left\lfloor\dfrac{n}{a}\right\rfloor+1$ choices of $x$. Moreover,  since $(a,b)=1$, among every $b$ consecutive integers there exists exactly one integer $x$ such that $y=\dfrac{n-ax}{b}$ is an integer. Hence every $b$ consecutive integers $x$ among $0\le x\le \left\lfloor\dfrac{n}{a}\right\rfloor$ contributes exactly one to $t_{a,b}(n)$. This yields \eqref{equ-lem-abn}.
\end{proof}

\begin{lem}\label{lem-main1-tabc}
	Given positive integer $a,b,c$ such that $(a,b)=1$. Let $t_{a,b,c}(n)$ denote the number of non-negative integer solutions of the equation $ax+by+cz=n$. Then we have
	\begin{equation}
		t_{a,b,c}(n)\ge \frac{n^2}{2abc}-(\frac{n}{c}+1).
	\end{equation}
\end{lem}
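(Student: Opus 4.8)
The plan is to reduce to the two–variable count of Lemma~\ref{lem-main1-axby} by fixing the value of $z$. First I would observe that every non-negative solution of $ax+by+cz=n$ has $0\le z\le \lfloor n/c\rfloor$, and that for each such $z$ the number of admissible pairs $(x,y)$ is exactly $t_{a,b}(n-cz)$; hence
\[
t_{a,b,c}(n)=\sum_{z=0}^{\lfloor n/c\rfloor}t_{a,b}(n-cz)\ \ge\ \sum_{z=0}^{\lfloor n/c\rfloor}\left\lfloor\frac{\lfloor (n-cz)/a\rfloor+1}{b}\right\rfloor,
\]
where the inequality is the lower bound in Lemma~\ref{lem-main1-axby} (applicable since $(a,b)=1$).

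Next I would strip the floor functions using $\lfloor y\rfloor+1\ge y$ and $\lfloor y\rfloor\ge y-1$. Since $\lfloor (n-cz)/a\rfloor+1\ge (n-cz)/a$, monotonicity of $\lfloor\cdot\rfloor$ gives
\[
\left\lfloor\frac{\lfloor (n-cz)/a\rfloor+1}{b}\right\rfloor\ \ge\ \left\lfloor\frac{n-cz}{ab}\right\rfloor\ \ge\ \frac{n-cz}{ab}-1,
\]
so that, writing $M:=\lfloor n/c\rfloor$,
\[
t_{a,b,c}(n)\ \ge\ \frac{1}{ab}\sum_{z=0}^{M}(n-cz)-(M+1)
=\frac{M+1}{ab}\left(n-\frac{cM}{2}\right)-(M+1).
\]
Finally I would bound the product from below: $cM\le n$ forces $n-\tfrac{cM}{2}\ge \tfrac{n}{2}$, while $M+1>\tfrac{n}{c}$, hence $(M+1)\bigl(n-\tfrac{cM}{2}\bigr)\ge \tfrac{n^2}{2c}$; combined with $M+1\le \tfrac{n}{c}+1$ this yields
\[
t_{a,b,c}(n)\ \ge\ \frac{n^2}{2abc}-\Bigl(\frac{n}{c}+1\Bigr),
\]
which is the desired inequality.

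I do not expect a genuine obstacle here. The only points that require care are the directions of the floor-function inequalities and the observation that both $n-\tfrac{cM}{2}$ and $M+1$ are non-negative, so that the single product estimate $(M+1)\bigl(n-\tfrac{cM}{2}\bigr)\ge (n/c)(n/2)$ is legitimate; everything else is arithmetic bookkeeping. (One could instead compare $\sum_{z}(n-cz)$ with an integral, but the closed-form evaluation of the arithmetic progression above is cleaner.)
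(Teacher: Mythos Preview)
Your proof is correct and follows essentially the same route as the paper: fix $z$, apply the two-variable bound $t_{a,b}(m)\ge m/(ab)-1$ from Lemma~\ref{lem-main1-axby}, sum the resulting arithmetic progression, and bound $M+1\le n/c+1$. The only cosmetic difference is that the paper writes $n=ck+i$ and evaluates the sum as $\frac{(n+i)(k+1)}{2ab}$ before invoking $k+1\ge n/c$, whereas you write it as $\frac{(M+1)(n-cM/2)}{ab}$ and bound the two factors separately; since $(M+1)(n-cM/2)=\frac{(k+1)(n+i)}{2}$ these are literally the same quantity.
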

\begin{proof}
	From Lemma \ref{lem-main1-axby}, we have \begin{equation}\label{equ-tab-lower}
		t_{a,b}(n)\ge \left\lfloor\frac{\left\lfloor\frac{n}{a}+1\right\rfloor}{b}\right\rfloor\ge \frac{n}{ab}-1.
	\end{equation}
	Clearly $0\le z\le \left\lfloor\dfrac{n}{c}\right\rfloor$. Moreover, for fixed $0\le z\le \left\lfloor\dfrac{n}{c}\right\rfloor$, the number of non-negative integer solutions of $ax+by+cz=n$ equals $t_{a,b}(n-cz)$. Using \eqref{equ-tab-lower} we have	
	\begin{align}\label{equ-tabc-n} t_{a,b,c}(n)=&\sum_{z=0}^{\left\lfloor\frac{n}{c}\right\rfloor} t_{a,b}(n-cz)
		\ge  \sum_{z=0}^{\left\lfloor\frac{n}{c}\right\rfloor}
		\left(\frac{n-cz}{ab}-1\right).
	\end{align}
	We next calculate $\sum_{z=0}^{\left\lfloor\frac{n}{c}\right\rfloor}
	\frac{n-cz}{ab}
	$. Assume $n=ck+i$, where $0\le i\le c-1$. Then we have
	\begin{align}\label{equ-n-ck+i}
		\sum_{z=0}^{\left\lfloor\frac{n}{c}\right\rfloor}
		\frac{n-cz}{ab}
		&=\sum_{z=0}^{k}
		\frac{n-cz}{ab}
		\nonumber\\
		&=\frac{\sum_{z=0}^{k}(i+cz)}{ab} \nonumber\\
		&=\frac{2i(k+1)+ck(k+1)}{2ab}\nonumber\\
		&=\frac{(n+i)(k+1)}{2ab}.
	\end{align}
	Note that $k\ge \frac{n}{c}-1$, we have
	\begin{equation}\label{eq-est-1}
		\sum_{z=0}^{\left\lfloor\frac{n}{c}\right\rfloor}
		\frac{n-cz}{ab}\ge \frac{(n+i)n}{2abc}\ge \frac{n^2}{2abc}.
	\end{equation}
	Combining \eqref{equ-tabc-n} and \eqref{eq-est-1}, we derive our desired result.
\end{proof}
  Now we prove $\# \mathcal{T}_7(n)\le \# \mathcal{S}_7(n-3)$ when $n\ge 152$ as stated in the following lemma.
\begin{lem}\label{thm1-lem-8}
	For $n\ge 152$,
	\begin{equation}\label{eq-T7S7}
		\# \mathcal{T}_7(n)\le \# \mathcal{S}_7(n-3).
	\end{equation}
\end{lem}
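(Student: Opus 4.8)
The plan is to abandon the bijective approach here (unlike for $\mathcal{T}_1,\dots,\mathcal{T}_6$) and instead estimate the two cardinalities directly: I would bound $\#\mathcal{T}_7(n)$ from above by a linear function of $n$, bound $\#\mathcal{S}_7(n-3)$ from below by a quadratic function of $n$, and then compare. The two counting lemmas \ref{lem-main1-axby} and \ref{lem-main1-tabc} are built exactly for this purpose.

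\emph{Upper bound for $\#\mathcal{T}_7(n)$.} For $(\alpha,\beta,\gamma)\in\mathcal{T}_7(n)$ one has $\beta=\emptyset$, while $\gamma$ ranges over the eight partitions with parts in $\{6,9\}$, $f_\gamma(6)\le 3$, $f_\gamma(9)\le 1$, whose sizes form the set $G=\{0,6,9,12,15,18,21,27\}$ (with $\sum_{g\in G}g=108$). Since every part of $\alpha$ is $\equiv 2\pmod 3$, the condition ``$\alpha_1=\alpha_2\le 5$ or $\alpha=(2)$'' leaves precisely the partitions $\alpha=(2^a)$ with $a\ge 0$ together with $\alpha=(5^b2^a)$ with $b\ge 2$, $a\ge 0$; hence the number of admissible $\alpha$ of size $m$ is $t_{2,5}(m-10)$ when $m$ is odd and $1+t_{2,5}(m-10)$ when $m$ is even, where $t_{2,5}(k)=0$ for $k<0$. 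Thus $\#\mathcal{T}_7(n)=\sum_{g\in G}\big(\varepsilon(n-g)+t_{2,5}(n-g-10)\big)$, with $\varepsilon(\cdot)$ the indicator of evenness. For $n\ge 37$ every $n-g-10$ is non-negative, $\sum_{g\in G}\varepsilon(n-g)=4$, and Lemma \ref{lem-main1-axby} gives $t_{2,5}(k)\le k/10+1$; summing then yields a bound of the shape $\#\mathcal{T}_7(n)\le (4n-c)/5$ for an explicit constant $c>0$ (one gets $c=34$).

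\emph{Lower bound for $\#\mathcal{S}_7(n-3)$.} A triplet in $\mathcal{S}_7(n-3)$ has the form $\big((2^x),(4^y7^z),\emptyset\big)$ with $x,z\ge 0$, $y\ge 4$ and $2x+4y+7z=n-3$. After the substitution $y=y'+4$ this shows that $\#\mathcal{S}_7(n-3)$ equals the number of non-negative solutions of $2x+4y'+7z=n-19$, i.e.\ $t_{2,4,7}(n-19)$ in the notation of Lemma \ref{lem-main1-tabc}. Since $\gcd(2,7)=1$, applying that lemma with the coprime pair $(2,7)$ and with $4$ in the role of $c$ gives $\#\mathcal{S}_7(n-3)\ge (n-19)^2/112-(n-19)/4-1$.

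\emph{Conclusion and the hard point.} It then remains to check that $(n-19)^2/112-(n-19)/4-1\ge (4n-c)/5$ for all $n\ge 152$. Clearing denominators turns this into a quadratic inequality $5n^2-\beta n+\eta\ge 0$ whose larger root lies below $152$; I would verify it directly at $n=152$ and observe that the quadratic is increasing there, so it persists for every larger $n$. Chaining the three displayed estimates yields \eqref{eq-T7S7}. The main obstacle is purely quantitative: the inequality $\#\mathcal{T}_7(n)\le\#\mathcal{S}_7(n-3)$ genuinely \emph{fails} for small $n$ (already at $n=50$), so the linear upper bound and the quadratic lower bound must be kept sharp enough that their crossover occurs by $n=152$. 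In particular the crude estimate $t_{2,5}(k)\le\lfloor k/5\rfloor+1$ is too weak, and one has to use the sharper $t_{2,5}(k)\le k/10+1$ coming from the ``exactly one solution among every five consecutive admissible values'' part of Lemma \ref{lem-main1-axby}.
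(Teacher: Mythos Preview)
Your proposal is correct and follows essentially the same route as the paper: bound $\#\mathcal{T}_7(n)$ above linearly via Lemma \ref{lem-main1-axby}, bound $\#\mathcal{S}_7(n-3)$ below quadratically via Lemma \ref{lem-main1-tabc} (with the coprime pair $(2,7)$ and $c=4$, giving exactly the paper's $(n-19)^2/112-(n-15)/4$), and compare. Your upper bound $(4n-34)/5$ is in fact a touch sharper than the paper's $8(n+2)/10$ because you track the parity term $\varepsilon(n-g)$ separately rather than absorbing it, but this is a cosmetic refinement of the same argument.
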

\begin{proof}
	Given $(\alpha,\beta,\gamma)\in \mathcal{T}_7(n)$, by definition, $\beta=\emptyset$, the part of $\alpha$ can only be either $5$ or $2$ and the part of $\gamma$  can only be either $9$ or $6$. Moreover, $f_\gamma(9)\le 1$, $f_{\gamma}(6)\le 3$ and $f_\alpha(5)\ne 1$. So $\gamma$ is one of the following eight partitions.
	\begin{equation*}
\emptyset,(9),(6),(9,6),(6,6),(9,6,6),(6,6,6),(9,6,6,6).
	\end{equation*}
For each $\gamma$, the number of $\alpha$ satisfying $|\alpha|=n-|\gamma|$ is equal to the number of non-negative integer solutions of $5a+2b=n-|\gamma|$, where $a\ne 1$. Hence $\#\mathcal{T}_7(n)$ equals the sum of the numbers of non-negative integer solutions of the following eight equations:
	\begin{align*}
		&5a+2b=n;\\
		&5a+2b=n-6;\\
		&5a+2b=n-9;\\
		&5a+2b=n-12;\\
		&5a+2b=n-15;\\
		&5a+2b=n-18;\\
		&5a+2b=n-21;\\
		&5a+2b=n-27,
	\end{align*}
	where $ a\ne 1$.
	
	Let $t(n)$ denote the number of non-negative integer solutions of $5a+2b=n$, where $a\ne 1$. From the above analysis, we find that
	\begin{equation}\label{equ-T7n} \#\mathcal{T}_7(n)=\sum_{i=0}^{1}\sum_{j=0}^{3}t(n-9i-6j).
	\end{equation}
	We next find an upper bound for $t(n)$. When $a=0$, clearly there are at most one integer solution of $2b=n$ depending on the parity of $n$. When $a\ge 2$, set $c=a-2$ and $d=b$, we see that $5a+2b=n$ is equivalent to $5c+2d=n-10$, where $c,d$ are non-negative integers.  By Lemma \ref{lem-main1-axby}, we have
\begin{equation}\label{eq-set-tn}
  t(n)\le \frac{\frac{n-10}{2}+1}{5}+1=\frac{n+2}{10}.
\end{equation}
Clearly $\frac{n+2}{10}$  increases with  $n$. Hence by \eqref{equ-T7n} and \eqref{eq-set-tn},
\begin{equation}\label{eq-est-T7n}
  \#\mathcal{T}_7(n)\le \sum_{i=0}^{1}\sum_{j=0}^{3}\frac{n+2}{10}\le
\frac{8(n+2)}{10}.
\end{equation}

	On the other hand, we proceed to find a lower bound for $\#\mathcal{S}_7(n-3)$. Given $(\pi,\mu,\delta)\in \mathcal{S}_7(n-3)$, by definition, $\pi=(2^x)$, $\mu=(4^y,7^z)$ and $\delta= \emptyset$, where $y\ge 4$. Setting $a=x,$ $b=y-4$ and $c=z$, it is easy to check that $\#\mathcal{S}_7(n-3)$ equals the number of the non-negative integer solutions of the following equation.
	\begin{equation}\label{eq-2x+7y+4z}
		2a+4b+7c=n-19,
	\end{equation}
	where $a,b,c \in \mathbb{N}$. Then using Lemma \ref{lem-main1-tabc}, we arrive at
\begin{equation}
	\# \mathcal{S}_7(n-3)\ge \frac{(n-19)^2}{2\cdot2\cdot7\cdot4}-\left(\frac{n-19}{4}+1\right)=\frac{(n-19)^2}{112}-\frac{n-15}{4}.
\end{equation}
It can be checked that when $n\ge 152$, the following inequality holds.
	\begin{equation}
		8\left(\frac{n+2}{10}\right)\le \frac{(n-19)^2}{112}-\frac{n-15}{4}.
	\end{equation}
	Thus we deduce \eqref{eq-T7S7} holds for $n\ge 152$.
\end{proof}

\noindent{\it{Proof of Theorem \ref{thm-main1}}.}
 From Lemma \ref{thm1-lem-2}-Lemma \ref{thm1-lem-7} and Lemma \ref{thm1-lem-8}, we deduce that the coefficients of $q^n$ in \eqref{eq-mainle} are non-positive  for $n\ge 152$. Thus by \eqref{eq-frac-q3-q3}, we deduce that $b_{3,2}(n)\ge b_{3,1}(n)$ when $n\ge 163$. It is trivial to check that the coefficients of $q^n$ in \eqref{eq-b32n-b31n} are non-negative when $28\le n\le 162$. This completes the proof.
\qed

\section{Proof of Theorem \ref{thm-mainodd}}\label{section-proofmainodd}
In this section, we will give a proof of Theorem \ref{thm-mainodd}. To this end, we first recall the generating function of $b_{2,i}(n)$ as stated in Theorem \ref{thm-ahq} due to Craig, Dawsey and Han \cite{craig-dawsey-han-inequalities}. In Lemma \ref{lem-main3-1}, we will show that
$\dfrac{1-q}{(-q;q)_{\infty}}\left(\sum_{n=0}^{\infty}(b_{2,2t+1}(n)-b_{2,2t+2}(n))q^n\right)$ can be written as $\dfrac{A_t(q)}{B_t(q)}$, where $A_t(q)$ is a polynomial of $q$, and 	$B_t(q)=\prod_{i=0}^{2t+1}\left(1+q+\cdots+q^{2i+1}\right)$  has positive coefficients. Thus  it suffices to prove  $B_t(q)\left(\sum_{n=0}^{\infty}(b_{2,2t+1}(n)-b_{2,2t+2}(n))q^n\right)=\dfrac{A_t(q)(-q;q)_\infty}{(1-q)}$ has infinite negative terms. Corollary \ref{cor-main3-1} shows that $A_t(1)<0$. Moreover, let $S(n)$ denote the coefficient of $q^n$ in $\dfrac{(-q;q)_\infty}{1-q}$, we will show that $S(n-k)\sim S(n)$ for any fixed $k$ as $n\to \infty$ in Lemma \ref{lem-main3-4}. Combining Corollary \ref{cor-main3-1} and Lemma \ref{lem-main3-4}, we are able to prove that the coefficients of $q^n$ in $\dfrac{A_t(q)(-q;q)_\infty}{(1-q)}$ are negative as $n\rightarrow \infty.$ This gives the proof of Theorem \ref{thm-mainodd}.

 Recall  that Craig, Dawsey and Han \cite{craig-dawsey-han-inequalities} found the following expression of the generating function on $b_{2,i}(n)$.

\begin{thm}[\cite{craig-dawsey-han-inequalities}]\label{thm-ahq}
	For any $i\ge 1$,
	\[\frac{\sum_{n=0}^{\infty}b_{2,i}(n)q^n}{(-q;q)_\infty}\]
	is a rational function of $q$. To be specific,
	\begin{align}\label{eq-ahq}
		\sum_{n=0}^{\infty}b_{2,i}(n)q^n
		=&(-q;q)_{\infty}\left(\sum_{j=0}^{\lceil i/2\rceil-1}\binom{i-j-1}{j}_{q^2}B_1(i,j;q)+\sum_{j=0}^{\lfloor i/2\rfloor-1}\binom{i-j-2}{j}_{q^2}B_2(i,j;q)\right),
	\end{align}
	where
	\begin{align*}
		B_1(i,j;q)=&q^i\sum_{k=0}^{j}\binom{j}{k}_{q^2}\frac{(-1)^kq^{k^2}}{1-q^{2i+2k-4j}},\\
		B_2(i,j;q)=&q^{3i-4j-3}\sum_{k=0}^{j}\binom{j}{k}_{q^2}\frac{(-1)^kq^{k^2+2k}}{1-q^{2i+2k-4j-2}}.
	\end{align*}
\end{thm}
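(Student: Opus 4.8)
The final statement to prove is Theorem \ref{thm-ahq}, which gives an explicit rational-function expression for the generating function of $b_{2,i}(n)$ divided by $(-q;q)_\infty$. Here is how I would approach it.

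\medskip

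\textbf{Overall strategy.} The plan is to compute $\sum_n b_{2,i}(n)q^n$ directly by summing, over all $2$-regular (equivalently, over all distinct-part) partitions $\lambda$, the number of cells of hook length $i$ in $\lambda$. Since $b_2(n)=d_2(n)$ counts partitions into distinct parts, I would work on the Young diagrams of strict partitions. The key classical fact is that a cell in row $r$, column $c$ of a diagram $\lambda$ has hook length $i$ exactly when $\lambda_r - c + (\lambda'_c - r) + 1 = i$, i.e. the arm plus leg plus one equals $i$. For a strict partition it is cleaner to use the description of hooks via the ``beta-set'' / first-column hook lengths, or equivalently to track, for each part $\lambda_r$, which values $j$ with $1\le j\le \lambda_r$ occur as ``arm+1'' of a cell in that row and then determine the leg length from the shape below. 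I would set up a bij« encoding: a hook of length $i$ with a cell at the end of an arm of length $a$ (so $0\le a\le i-1$) in row $r$ forces the leg to have length $i-1-a$, which imposes that $\lambda_r$ is large enough and that exactly the right number of parts below $\lambda_r$ fall in a prescribed interval. Summing the resulting indicator over all strict partitions produces, after factoring out $(-q;q)_\infty=\sum d_2(n)q^n$, a finite sum of geometric-type series; the two families $B_1(i,j;q)$ and $B_2(i,j;q)$ should correspond to the two cases according to whether the cell lies in the first column or not (or whether the arm is empty), and the Gaussian binomials $\binom{i-j-1}{j}_{q^2}$, $\binom{i-j-2}{j}_{q^2}$ should arise from counting the ways to insert $j$ ``gaps'' of even size among the parts constrained to a window of length roughly $i-2j$.

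\medskip

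\textbf{Key steps in order.} First, fix $i$ and a cell of hook length $i$; parametrize it by its row $r$, its arm length $a\in\{0,\dots,i-1\}$, and hence leg length $i-1-a$. Second, translate the existence of such a cell into combinatorial conditions on the strict partition $\lambda$: the part $\lambda_r$ must equal $c+a$ where $c$ is the column, and exactly $i-1-a$ of the parts $\lambda_{r+1},\lambda_{r+2},\dots$ must be $\ge c$ while $\lambda_{r+i-a}< c$ (reading the leg condition off the conjugate). Third, split into the two cases: (i) the cell is such that nothing forces an extra part-collision — this yields $B_1$; (ii) a boundary case (e.g. the column $c=1$, or the arm wraps to the staircase) — this yields $B_2$. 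Fourth, for each case, sum the generating weight $q^{|\lambda|}$ over all valid $\lambda$: the parts above row $r$ contribute an unconstrained strict partition (giving a factor that recombines into $(-q;q)_\infty$), the ``window'' of parts near row $r$ contributes a Gaussian binomial in $q^2$ (distinct parts in an interval = a $q^2$-binomial after the standard staircase shift), and the geometric sum over the size of $\lambda_r$ (or over how far down the leg window sits) contributes the factors $\frac{1}{1-q^{2i+2k-4j}}$ and $\frac{1}{1-q^{2i+2k-4j-2}}$, with the alternating $(-1)^kq^{k^2}$ coming from an inclusion–exclusion ($q$-binomial theorem) that resolves the ``at least vs. exactly'' counting of parts in the window. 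Fifth, collect terms: the index $j$ counts how many parts sit strictly inside the window, $k$ is the inclusion–exclusion index, and summing over $r$ is what converts the per-row contribution into the clean product with $(-q;q)_\infty$ pulled out front.

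\medskip

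\textbf{Main obstacle.} The delicate part is the bookkeeping in the fourth step: correctly identifying which configurations of parts around row $r$ give rise to the two Gaussian binomials $\binom{i-j-1}{j}_{q^2}$ and $\binom{i-j-2}{j}_{q^2}$, and getting the exponents in $B_1,B_2$ (the $q^i$, $q^{3i-4j-3}$ prefactors, the $q^{k^2}$ vs. $q^{k^2+2k}$, and the denominators $2i+2k-4j$ vs. $2i+2k-4j-2$) exactly right. This requires a careful choice of coordinates for strict partitions — I would use the shifted-diagram / ``staircase-subtracted'' picture so that distinct parts become weakly decreasing and $q$-binomials appear naturally — and a precise inclusion–exclusion to pass from ``at least $m$ parts exceed the threshold'' to ``exactly $m$ parts do,'' which is where the $(-1)^k q^{k^2}$ and $(-1)^k q^{k^2+2k}$ terms are born. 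A secondary technical point is justifying the interchange of the (infinite) sum over $\lambda$ with the finite sums over $r,a,j,k$, which is routine since everything converges as a formal power series in $q$. Once the combinatorial dictionary is pinned down, the identity \eqref{eq-ahq} follows by assembling the pieces and recognizing the rational function; as a sanity check I would verify small cases ($i=1,2,3$) against the known formulas of Singh–Barman, e.g. Theorem \ref{thm-bt1n} with $t=2$ for $b_{2,1}(n)$.
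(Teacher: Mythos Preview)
The paper does not prove Theorem \ref{thm-ahq} at all: it is quoted verbatim from Craig, Dawsey and Han \cite{craig-dawsey-han-inequalities} and then used as a black box in Lemmas \ref{lem-main3-1}, \ref{thm-main21-lem1} and \ref{thm-main22-lem1}. So there is no ``paper's own proof'' to compare your sketch against.

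That said, your proposed approach has a genuine error at the very first step. You write that since $b_2(n)=d_2(n)$ you will ``work on the Young diagrams of strict partitions,'' and your entire hook analysis is carried out for partitions into distinct parts. But $b_{2,i}(n)$ counts hooks of length $i$ in $2$-\emph{regular} partitions, i.e.\ partitions into \emph{odd} parts, not in strict partitions. Euler's identity $b_2(n)=d_2(n)$ equates the \emph{number} of such partitions, but the Young diagrams --- and hence the multisets of hook lengths --- are completely different. Indeed the paper explicitly introduces $d_{2,i}(n)$ for the strict-partition hook count and recalls that Ballantine, Burson, Craig, Folsom and Wen proved $d_{2,1}(n)\ge b_{2,1}(n)$ and $b_{2,i}(n)\ge d_{2,i}(n)$ for $i=2,3$; these inequalities would be trivial equalities if your identification were correct. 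So every subsequent step --- the arm/leg parametrization, the shifted-diagram coordinates, the inclusion--exclusion producing $(-1)^k q^{k^2}$ --- is being applied to the wrong family of diagrams and cannot yield \eqref{eq-ahq}.

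If you want to repair the approach, you must analyze hooks in partitions whose parts are all odd. The conjugate of such a partition has the property that consecutive columns have lengths of the same parity in pairs, and the factor $(-q;q)_\infty = 1/(q;q^2)_\infty$ in \eqref{eq-ahq} is the generating function for odd-part partitions, not strict ones. The Gaussian binomials in $q^2$ and the denominators $1-q^{2i+2k-4j}$, $1-q^{2i+2k-4j-2}$ reflect the even spacing forced by odd parts, not a staircase shift of distinct parts. Your general plan (fix a hook, encode its arm/leg constraints on nearby parts, sum with inclusion--exclusion, factor out the full generating function) is reasonable, but it has to be executed on the correct class of partitions.
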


We first transform
\[\frac{1-q}{(-q;q)_{\infty}}\left(\sum_{n=0}^{\infty}(b_{2,2t+1}(n)-b_{2,2t+2}(n))q^n\right)\]
 as in the following lemma.

\begin{lem}\label{lem-main3-1}
	\begin{equation}\label{equ-frac-1-q}
		\frac{1-q}{(-q;q)_{\infty}}\left(\sum_{n=0}^{\infty}(b_{2,2t+1}(n)-b_{2,2t+2}(n))q^n\right)=\frac{A_{t}(q)}{B_{t}(q)},
	\end{equation}
	where $A_t(q)$ is a polynomial of $q$ and 	\[B_t(q)=\prod_{i=0}^{2t+1}\left(1+q+\cdots+q^{2i+1}\right)\]  has positive coefficients.
\end{lem}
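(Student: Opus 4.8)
The plan is to substitute $i=2t+1$ and $i=2t+2$ into Theorem \ref{thm-ahq}, subtract, and then read off the denominator $B_t(q)$ together with the cancellation of $1-q$ directly from the shape of the resulting formula. Write $G(q):=\sum_{n=0}^{\infty}(b_{2,2t+1}(n)-b_{2,2t+2}(n))q^n$. By Theorem \ref{thm-ahq}, $G(q)/(-q;q)_\infty$ is a finite sum of terms, each a Gaussian binomial in $q^2$ times a monomial in $q$ times a single factor $\dfrac{1}{1-q^{e}}$, where $e$ ranges over the exponents $2i+2k-4j$ (coming from $B_1(i,j;q)$) and $2i+2k-4j-2$ (coming from $B_2(i,j;q)$) with $k$ and $j$ in the ranges prescribed there. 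The first thing to verify is that, for $i\in\{2t+1,2t+2\}$, every such $e$ is an even integer with $2\le e\le 2(2t+2)$: the lower bound $e\ge 2$ follows from $k\ge 0$ together with $j\le\lceil i/2\rceil-1$ (resp. $j\le\lfloor i/2\rfloor-1$), and the upper bound from $k\le j$, $j\ge 0$, $i\le 2t+2$. In particular $e\ne 0$, so each summand has at most a simple pole at $q=1$, and hence so does $G(q)/(-q;q)_\infty$. Clearing denominators, we may write
\[\frac{G(q)}{(-q;q)_\infty}=\frac{N(q)}{\prod_{m=1}^{2t+2}(1-q^{2m})}\]
for a polynomial $N(q)$ (a genuine polynomial, not merely a Laurent polynomial, since the monomial prefactors $q^{i}$ in $B_1(i,j;q)$ and $q^{3i-4j-3}$ in $B_2(i,j;q)$ are positive powers of $q$ throughout the relevant ranges).

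Next I would factor $1-q^{2m}=(1-q)(1+q+\cdots+q^{2m-1})$; after the substitution $m=i+1$ this gives
\[\prod_{m=1}^{2t+2}(1-q^{2m})=(1-q)^{2t+2}\prod_{i=0}^{2t+1}(1+q+\cdots+q^{2i+1})=(1-q)^{2t+2}\,B_t(q),\]
and $B_t(1)=\prod_{i=0}^{2t+1}(2i+2)\ne 0$, so $\prod_{m=1}^{2t+2}(1-q^{2m})$ vanishes to order exactly $2t+2$ at $q=1$. Since $G(q)/(-q;q)_\infty$ has at most a simple pole at $q=1$, the polynomial $N(q)$ must be divisible by $(1-q)^{2t+1}$; write $N(q)=(1-q)^{2t+1}A_t(q)$ with $A_t(q)$ a polynomial. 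Substituting this back and multiplying by $1-q$,
\[\frac{1-q}{(-q;q)_{\infty}}\,G(q)=\frac{(1-q)^{2t+2}A_t(q)}{(1-q)^{2t+2}B_t(q)}=\frac{A_t(q)}{B_t(q)},\]
which is exactly \eqref{equ-frac-1-q}. Finally, $B_t(q)$ is a product of polynomials all of whose coefficients equal $1$, so it has a strictly positive coefficient in every degree from $0$ up to $\deg B_t(q)=(2t+2)^2$; this is the positivity assertion.

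The only step that takes genuine (though routine) care is the exponent bookkeeping in the first paragraph: confirming that the denominators $1-q^{e}$ produced by Theorem \ref{thm-ahq} for $i=2t+1,2t+2$ never collapse ($e\ne 0$, which is what guarantees only simple poles) and never exceed $1-q^{2(2t+2)}$ (so that $\prod_{m=1}^{2t+2}(1-q^{2m})$ is a legitimate common denominator and contributes precisely the factor $(1-q)^{2t+2}$ at $q=1$). Once that is in place, the simple-pole observation forces the factor $(1-q)^{2t+1}$ out of the numerator, and the passage to $A_t(q)/B_t(q)$ is purely formal.
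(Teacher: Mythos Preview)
Your argument is correct, but it takes a more circuitous route than the paper's. The paper multiplies by $(1-q)$ \emph{first}: since every denominator produced by Theorem~\ref{thm-ahq} is of the form $1-q^{e}$ with $e$ even in the range $2\le e\le 4t+4$, multiplying each term by $(1-q)$ immediately turns its denominator into $1+q+\cdots+q^{e-1}$, and $B_t(q)=\prod_{i=0}^{2t+1}(1+q+\cdots+q^{2i+1})$ is then visibly a common multiple of all of these. No pole-order argument is needed. Your version instead clears to the larger denominator $\prod_{m=1}^{2t+2}(1-q^{2m})=(1-q)^{2t+2}B_t(q)$ and then invokes the simple-pole observation at $q=1$ to extract $(1-q)^{2t+1}$ from the numerator---which is perfectly valid, but introduces and then removes $2t+1$ spurious factors of $(1-q)$. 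The two arguments are really the same identification of the denominator structure; the paper's path is shorter because it cancels the single factor of $(1-q)$ term-by-term at the outset rather than globally at the end.
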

\begin{proof}
	From Theorem \ref{thm-ahq}, we see that
	\begin{align}
		&\frac{1-q}{(-q;q)_{\infty}}\left(\sum_{n=0}^{\infty}(b_{2,2t+1}(n)-b_{2,2t+2}(n))q^n\right)\nonumber\\
		=&\sum_{j=0}^{t}\binom{2t-j}{j}_{q^2}q^{2t+1}\sum_{k=0}^{j}\binom{j}{k}_{q^2}\frac{(-1)^kq^{k^2}}{1+q+\cdots+q^{4t+2k-4j+1}}\nonumber\\
		&\left.+\sum_{j=0}^{t-1}\binom{2t-1-j}{j}_{q^2}q^{6t-4j}\sum_{k=0}^{j}\binom{j}{k}_{q^2}\frac{(-1)^kq^{k^2+2k}}{1+q+\cdots+q^{4t+2k-4j-1}}\right.\nonumber\\
		&\left.-\sum_{j=0}^{t}\binom{2t-j+1}{j}_{q^2}q^{2t+2}\sum_{k=0}^{j}\binom{j}{k}_{q^2}\frac{(-1)^kq^{k^2}}{1+q+\cdots+q^{4t+2k-4j+3}}\right.\nonumber\\
		&-\sum_{j=0}^{t}\binom{2t-j}{j}_{q^2}q^{6t-4j+3}\sum_{k=0}^{j}\binom{j}{k}_{q^2}\frac{(-1)^kq^{k^2+2k}}{1+q+\cdots+q^{4t+2k-4j+1}}.\label{equ-43}
	\end{align}
	Clearly
	\[B_t(q)=\prod_{i=0}^{2t+1}\left(1+q+\cdots+q^{2i+1}\right)\]
	is a common multiple of each denominator in \eqref{equ-43}.	This yields our desired result.
\end{proof}

The following lemma gives the value of $\dfrac{A_t(1)}{B_t(1)}$.

\begin{lem}\label{lem-main3-2}
	We have
	\begin{equation}
	\frac{A_t(1)}{B_t(1)}=-\frac{1}{2(2t+1)(2t+2)}.
	\end{equation}
\end{lem}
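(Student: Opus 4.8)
The plan is to evaluate the right-hand side of \eqref{equ-43} at $q=1$ directly. Since $\dfrac{1-q}{(-q;q)_\infty}\left(\sum_n (b_{2,2t+1}(n)-b_{2,2t+2}(n))q^n\right)=\dfrac{A_t(q)}{B_t(q)}$ as rational functions, and $B_t(q)$ has no zero at $q=1$ (each factor $1+q+\cdots+q^{2i+1}$ equals $2i+2\ne 0$ there), the quantity $\dfrac{A_t(1)}{B_t(1)}$ is obtained simply by substituting $q=1$ into the four sums in \eqref{equ-43}. First I would record that at $q=1$ every $q$-binomial $\binom{m}{j}_{q^2}$ becomes the ordinary binomial $\binom{m}{j}$, every power of $q$ becomes $1$, and each geometric-type denominator $1+q+\cdots+q^{N}$ becomes $N+1$.

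Next I would simplify the resulting numerical expression. The first and fourth sums in \eqref{equ-43} share the denominator $1+q+\cdots+q^{4t+2k-4j+1}$, which at $q=1$ is $4t+2k-4j+2 = 2(2t+k-2j+1)$; the inner signs $(-1)^k$ and binomials $\binom{j}{k}$ are identical, so those two sums combine. Concretely, the first sum contributes $\sum_{j=0}^{t}\binom{2t-j}{j}\sum_{k=0}^{j}\binom{j}{k}\dfrac{(-1)^k}{2(2t+k-2j+1)}$ and the fourth subtracts exactly the same thing, so \emph{they cancel entirely}. That leaves only the second sum (with a plus sign) and the third sum (with a minus sign). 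At $q=1$ these become
\[
\sum_{j=0}^{t-1}\binom{2t-1-j}{j}\sum_{k=0}^{j}\binom{j}{k}\frac{(-1)^k}{4t+2k-4j}
-\sum_{j=0}^{t}\binom{2t-j+1}{j}\sum_{k=0}^{j}\binom{j}{k}\frac{(-1)^k}{4t+2k-4j+4}.
\]
The inner sums $\sum_{k=0}^{j}\binom{j}{k}\dfrac{(-1)^k}{c+2k}$ can be evaluated in closed form via the Beta-integral identity $\sum_{k=0}^{j}\binom{j}{k}\dfrac{(-1)^k}{c+2k}=\dfrac{1}{2}B\!\left(\tfrac{c}{2},j+1\right)=\dfrac{j!\,\Gamma(c/2)}{2\,\Gamma(c/2+j+1)}$, i.e. $\dfrac{(2j)!!}{c(c+2)\cdots(c+2j)}$ up to the obvious factor. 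Substituting this back should telescope the outer sums over $j$.

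The main obstacle — and where most of the work lies — is carrying out this telescoping cleanly: after plugging in the closed form for the inner sums, the second sum has general term involving $\prod_{i=0}^{j}(4t-4j+2i)^{-1}$ times $\binom{2t-1-j}{j}$, and the third has an analogous term with $4t-4j+4$ in place of $4t-4j$ and $\binom{2t-j+1}{j}$ in place of $\binom{2t-1-j}{j}$. I expect that a shift of index $j\mapsto j-1$ in one of the two sums aligns the summands so that the difference collapses to the single boundary term $-\dfrac{1}{2(2t+1)(2t+2)}$, all intermediate contributions cancelling in pairs; one should check the $j=0$ term of the third sum, which is $-\dfrac{1}{4t+4}\cdot\dfrac{1}{?}$, and the top term $j=t-1$ (resp.\ $j=t$) separately since those are the ones that survive. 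An alternative, possibly cleaner route is to avoid \eqref{equ-43} altogether: differentiate or take the appropriate limit $q\to 1$ in the original generating-function identity of Theorem \ref{thm-ahq} using $\binom{m}{j}_{q^2}\to\binom{m}{j}$ and the known fact that $\sum_n b_{2,i}(n)q^n/(-q;q)_\infty$ has a simple pole structure, extracting $A_t(1)/B_t(1)$ as a residue. Either way, once the algebra is pushed through, the stated value $-\dfrac{1}{2(2t+1)(2t+2)}$ follows; I would then double-check it numerically for $t=1,2,3$ against the explicit polynomials $A_t(q)$.
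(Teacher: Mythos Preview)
Your approach is the paper's: evaluate \eqref{equ-43} at $q=1$, observe that the first and fourth sums cancel, and handle the inner $k$-sums via the Beta integral. Where you stall is exactly where the paper's argument becomes trivial, and you are overcomplicating it. Writing the inner sum of the second term as
\[
\sum_{k=0}^{j}\binom{j}{k}\frac{(-1)^k}{2t+k-2j}=\int_0^1 x^{2t-2j-1}(1-x)^j\,dx=\frac{1}{(2t-j)\binom{2t-1-j}{j}},
\]
the outer binomial $\binom{2t-1-j}{j}$ cancels \emph{exactly}, leaving just $\dfrac{1}{2t-j}$; the third term likewise reduces to $\dfrac{1}{2t+2-j}$. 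So the whole expression collapses to
\[
\frac{1}{2}\left(\sum_{j=0}^{t-1}\frac{1}{2t-j}-\sum_{j=0}^{t}\frac{1}{2t+2-j}\right)=\frac{1}{2}\left(\frac{1}{t+1}-\frac{1}{2t+1}-\frac{1}{2t+2}\right)=-\frac{1}{2(2t+1)(2t+2)},
\]
with no index shift or pairwise telescoping needed. Your anticipated ``main obstacle''---a residual product $\prod_{i=0}^{j}(4t-4j+2i)^{-1}$ sitting next to $\binom{2t-1-j}{j}$---is illusory: those two factors are reciprocals up to the single linear term $2t-j$. Once you see that, the proof is three lines, not a delicate boundary-term analysis.
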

\begin{proof}
	From \eqref{equ-43}, we have
	\begin{align}\label{equ-45}
		&\frac{A_t(1)}{B_t(1)}\nonumber\\
		=&\lim_{q\to 1}\frac{1-q}{(-q;q)_{\infty}}\left(\sum_{n=0}^{\infty}(b_{2,2t+1}(n)-b_{2,2t+2}(n))q^n\right)\nonumber\\
		=&\sum_{j=0}^{t-1}\binom{2t-1-j}{j}\sum_{k=0}^{j}\binom{j}{k}\frac{(-1)^k}{4t+2k-4j}-\sum_{j=0}^{t}\binom{2t+1-j}{j}\sum_{k=0}^{j}\binom{j}{k}\frac{(-1)^k}{4t+4+2k-4j}\nonumber\\
		=&\frac{1}{2}\left(\sum_{j=0}^{t-1}\binom{2t-1-j}{j}\sum_{k=0}^{j}\binom{j}{k}\frac{(-1)^k}{2t+k-2j}-\sum_{j=0}^{t}\binom{2t+1-j}{j}\sum_{k=0}^{j}\binom{j}{k}\frac{(-1)^k}{2t+2+k-2j}\right).
			\end{align}
			Notice that
			\[\frac{1}{i+1}=\int_{0}^{1}x^i{\rm d}x.\]
			Hence \eqref{equ-45} can be further transformed as follows.
			\begin{align}
	&\frac{A_t(1)}{B_t(1)}\nonumber\\	=&\frac{1}{2}\left(\sum_{j=0}^{t-1}\binom{2t-1-j}{j}\sum_{k=0}^{j}(-1)^k\binom{j}{k}\int_{0}^{1}x^{2t+k-2j-1}\,{\rm d}x\right.\nonumber\\
		&\left.-\sum_{j=0}^{t}\binom{2t+1-j}{j} \sum_{k=0}^{j}(-1)^k\binom{j}{k}\int_{0}^{1}x^{2t+1+k-2j}\,{\rm d}x\right)\nonumber\\
		=&\frac{1}{2}\left(\sum_{j=0}^{t-1}\binom{2t-1-j}{j}\int_{0}^{1}x^{2t-2j-1}\sum_{k=0}^{j}(-1)^k\binom{j}{k}x^k\,{\rm d}x\right.\nonumber\\
		&\left.-\sum_{j=0}^{t}\binom{2t+1-j}{j}\int_{0}^{1}x^{2t+1-2j}\sum_{k=0}^{j}(-1)^k\binom{j}{k}x^k\,{\rm d}x\right)\nonumber\\
		=&\frac{1}{2}\left(\sum_{j=0}^{t-1}\binom{2t-1-j}{j}\int_{0}^{1}x^{2t-2j-1}(1-x)^j\,{\rm d}x-\sum_{j=0}^{t}\binom{2t+1-j}{j}\int_{0}^{1}x^{2t-2j+1}(1-x)^j\,{\rm d}x\right)\label{eq-tointx1-x}.
	\end{align}
	Recall that the well known Eulerian integral of the first kind implies that
	for non-negative integer $a,b$,
	\begin{equation}\label{eq-Euler-Beta}
		\int_{0}^{1}x^a(1-x)^b\,{\rm d}x=\frac{1}{a+b+1}\frac{1}{\binom{a+b}{b}}.
	\end{equation}
	Substituting \eqref{eq-Euler-Beta} into \eqref{eq-tointx1-x}, we have
	\begin{align}
		\frac{A_t(1)}{B_t(1)}
		=&\frac{1}{2}\left(\sum_{j=0}^{t-1}\binom{2t-1-j}{j}\frac{1}{\binom{2t-1-j}{j}(2t-j)}-\sum_{j=0}^{t}\binom{2t+1-j}{j}\frac{1}{\binom{2t+1-j}{j}(2t-j+2)}\right)\nonumber\\
		=&\frac{1}{2}\left(\sum_{j=0}^{t-1}\frac{1}{2t-j}-\sum_{j=0}^{t}\frac{1}{2t-j+2}\right)\nonumber\\
		=&-\frac{1}{2(2t+1)(2t+2)}.
	\end{align}
\end{proof}

From Lemma \ref{lem-main3-2}, the following corollary is clear.

\begin{core}\label{cor-main3-1}
	We have $A_t(1)<0$.
\end{core}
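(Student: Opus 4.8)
The plan is to deduce this at once from Lemma \ref{lem-main3-2}, so the argument is essentially a one-line computation; the only thing that needs care is the sign of the denominator. First I would evaluate $B_t(q)=\prod_{i=0}^{2t+1}\left(1+q+\cdots+q^{2i+1}\right)$ at $q=1$: each factor $1+q+\cdots+q^{2i+1}$ has $2i+2$ summands, so it equals $2i+2\ge 2$ there, and hence $B_t(1)=\prod_{i=0}^{2t+1}(2i+2)>0$. In particular $B_t(1)\ne 0$, which is exactly what is needed for the quotient $A_t(q)/B_t(q)$ appearing in Lemma \ref{lem-main3-1} to be well-defined and finite at $q=1$, and for the limit evaluated in Lemma \ref{lem-main3-2} to actually compute the value $A_t(1)/B_t(1)$.

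Given this, I would simply multiply the identity of Lemma \ref{lem-main3-2},
\[
\frac{A_t(1)}{B_t(1)}=-\frac{1}{2(2t+1)(2t+2)},
\]
through by the positive quantity $B_t(1)$ to obtain
\[
A_t(1)=-\frac{B_t(1)}{2(2t+1)(2t+2)}<0,
\]
since $2(2t+1)(2t+2)>0$ for every $t\ge 1$. This establishes the corollary. There is no real obstacle here: the substance of the claim lies entirely in Lemma \ref{lem-main3-2}, and the corollary only repackages its conclusion together with the trivial positivity of $B_t(1)$.
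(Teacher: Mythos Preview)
Your proposal is correct and follows essentially the same approach as the paper: both invoke Lemma \ref{lem-main3-2} for the sign of $A_t(1)/B_t(1)$ and then observe $B_t(1)>0$. The only cosmetic difference is that you compute $B_t(1)=\prod_{i=0}^{2t+1}(2i+2)$ explicitly, whereas the paper simply notes that $B_t(q)$ has non-negative coefficients.
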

\begin{proof}
	From Lemma \ref{lem-main3-2}, we know that
	\[	\frac{A_t(1)}{B_t(1)}<0.\] Moreover, the coefficients of $B_{t}(q)$ are all non-negative, which implies $ B_t(1)>0$. Thus we have $A_t(1)<0$.
\end{proof}

 Let $S(n)$ denote the coefficient of $q^n$ in $\dfrac{(-q;q)_\infty}{1-q}$ as stated in the beginning of this section, we have the following result on $S(n)$.
\begin{lem}\label{lem-main3-4}
	For fixed $k$, we have $S(n+k)\sim S(n)$ when $n\to \infty$.
\end{lem}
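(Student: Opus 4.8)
The coefficient $S(n)$ is the partition-type count generated by $\frac{(-q;q)_\infty}{1-q}=\frac{1}{1-q}\sum_{m\ge 0}q^m(-q^2;q^2)_\infty$-style product; more usefully, $(-q;q)_\infty=\sum_{n\ge 0}d_2(n)q^n$ counts partitions into distinct parts, so $S(n)=\sum_{m=0}^{n}d_2(m)$ is the partial-sum (cumulative) counting function of distinct partitions. The statement $S(n+k)\sim S(n)$ for fixed $k$ therefore amounts to showing the cumulative count grows sub-geometrically — indeed only sub-exponentially with a stretched-exponential rate — so that a bounded shift is asymptotically negligible. The cleanest route is to invoke the Hardy–Ramanujan asymptotic $d_2(n)\sim \frac{1}{4\cdot 3^{1/4}n^{3/4}}\exp\!\big(\pi\sqrt{n/3}\big)$. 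Summing, $S(n)=\sum_{m\le n}d_2(m)$ is dominated by its last terms and one gets $S(n)\sim C\, n^{1/4}\exp(\pi\sqrt{n/3})$ for an explicit constant $C$ (the $\sqrt n$ "width" of the tail, up to the polynomial correction). Then
\[
\frac{S(n+k)}{S(n)}\sim \left(\frac{n+k}{n}\right)^{1/4}\exp\!\Big(\pi\sqrt{\tfrac{n+k}{3}}-\pi\sqrt{\tfrac{n}{3}}\Big)=\big(1+o(1)\big)\exp\!\Big(\frac{\pi k}{2\sqrt{3n}}+O(n^{-3/2})\Big)\longrightarrow 1,
\]
which is exactly the claim.

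**Steps, in order.** First I would record $S(n)=\sum_{m=0}^{n}d_2(m)$ from the generating function identity $\frac{(-q;q)_\infty}{1-q}=\big(\sum_{m\ge 0}d_2(m)q^m\big)\cdot\big(\sum_{j\ge 0}q^j\big)$. Second, quote the Hardy–Ramanujan / Meinardus asymptotic for $d_2(n)$ (partitions into distinct parts), which is standard; alternatively one may use only the cruder two-sided bound $e^{c_1\sqrt n}\le d_2(n)\le e^{c_2\sqrt n}$ with $d_2$ eventually nondecreasing, which already suffices. Third, derive $S(n+k)-S(n)=\sum_{m=n+1}^{n+k}d_2(m)\le k\, d_2(n+k)$, and compare with $S(n)\ge d_2(n)$; since $d_2(n+k)/d_2(n)\to 1$ (immediate from the asymptotic, or from $\sqrt{n+k}-\sqrt n\to 0$), we get $S(n+k)-S(n)=o(S(n))$, hence $S(n+k)\sim S(n)$. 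For the negative-$k$ case write $S(n)=S(n+k)+\sum_{m=n+k+1}^{n}d_2(m)$ and argue symmetrically. This keeps everything elementary and avoids needing the precise constant $C$.

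**Main obstacle.** There is no deep obstacle; the only thing requiring care is that the cumulative sum $S(n)$ is \emph{not} itself of the form "a single nice partition function", so one must be slightly careful that the asymptotic of a partial sum of $d_2$ is driven by the top $O(\sqrt n)$ terms. The quickest way to sidestep even this is to never compute $S(n)$'s asymptotic at all: use only (i) $d_2$ is eventually nondecreasing, so $d_2(n)\le S(n)\le (n+1)d_2(n)$, and (ii) $d_2(n+k)\sim d_2(n)$. Then $S(n+k)\le (n+k+1)d_2(n+k)\le (n+k+1)(1+o(1))d_2(n)\le (n+k+1)(1+o(1))S(n)$ — but that bound is too lossy by a polynomial factor, so in fact one does want the genuine asymptotic $S(n)\sim C n^{1/4}e^{\pi\sqrt{n/3}}$, obtained by Euler–Maclaurin / Laplace's method on $\sum_{m\le n}d_2(m)$, or by noting $\frac{1}{1-q}$ contributes a simple pole that in the saddle-point analysis multiplies the $d_2$ asymptotic by the $\sqrt n$-scale factor. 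I expect the authors to take this saddle-point / known-asymptotic route, and the write-up's only real content is bookkeeping the polynomial prefactor and confirming $\exp(\pi(\sqrt{n+k}-\sqrt n)/\sqrt3)\to 1$.
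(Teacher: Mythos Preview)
Your final plan---compute the asymptotic $S(n)\sim C\,n^{1/4}e^{\pi\sqrt{n/3}}$ via a Laplace/Euler--Maclaurin analysis of the partial sum $\sum_{m\le n}d_2(m)$ and then take the ratio---is correct and would prove the lemma. However, it is considerably more work than what the paper does. The paper observes that $S(n)=\sum_{i=0}^{n}Q(i)$ is a strictly increasing sequence tending to infinity, and then applies the Stolz--Ces\`aro theorem directly:
\[
\lim_{n\to\infty}\frac{S(n+k)}{S(n)}=\lim_{n\to\infty}\frac{S(n+k+1)-S(n+k)}{S(n+1)-S(n)}=\lim_{n\to\infty}\frac{Q(n+k+1)}{Q(n+1)}=1,
\]
the last limit being immediate from $Q(n+k)\sim Q(n)$. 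This bypasses entirely the need to find the asymptotic of $S(n)$ itself; only the asymptotic of $Q(n)$ (or even just $Q(n+k)/Q(n)\to 1$) is required. Your route buys you the precise growth of $S(n)$ as a byproduct, which the paper does not need and does not compute.

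One remark on your intermediate ``Step 3'': the argument as written there has a gap. From $S(n+k)-S(n)\le k\,d_2(n+k)$ together with only the lower bound $S(n)\ge d_2(n)$ and $d_2(n+k)\sim d_2(n)$, you can conclude at best that $\frac{S(n+k)-S(n)}{S(n)}\le k(1+o(1))$, which is bounded, not $o(1)$. You seem to notice this yourself in the ``Main obstacle'' paragraph when you abandon the crude sandwich $d_2(n)\le S(n)\le (n+1)d_2(n)$, but the earlier paragraph should not claim $o(S(n))$ from those ingredients alone. If you want a short argument that avoids computing $S(n)$'s asymptotic, Stolz--Ces\`aro is exactly the missing trick.
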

\begin{proof}
	Let $Q(n)$ denote the number of distinct partitions of $n$. Clearly (see \cite[pp. 5]{andrews1998theory})
	\[\sum_{n=0}^{\infty}Q(n)q^n=(-q;q)_\infty.\]
	The asymptotic function of $Q(n)$ is given below (see  \cite{Abramowitz1972}),
\begin{equation}\label{eq-preQ}
	Q(n)\sim \frac{e^{\pi \sqrt{n/3}}}{4\cdot 3^{\frac{1}{4}}\cdot n^{\frac{3}{4}}}.
\end{equation}
From \eqref{eq-preQ}, it is easy to see that $Q(n+k)\sim Q(n)$ for any fixed $k$ as $n\to\infty.$ By the definition of $S(n)$, we have
	\begin{align}
		\sum_{n=0}^{\infty}S(n)q^n&=\frac{(-q;q)_\infty}{1-q}\nonumber\\
		&=\sum_{n=0}^{\infty}Q(n)q^n\sum_{n=0}^{\infty}q^n\nonumber\\
		&=\sum_{n=0}^{\infty}\sum_{i=0}^{n}Q(i)q^n.
	\end{align}
Thus
	\begin{equation}
		S(n)=\sum_{i=0}^{n}Q(i).
	\end{equation}
	Clearly $S(n)\to \infty$ as $n\to \infty$. Using the well known O'Stolz theorem, we have
	\begin{align}\label{eq-QQQQQ}
		\lim_{n\to \infty}\frac{S(n+k)}{S(n)}=&\lim_{n\to \infty}\frac{S(n+k+1)-S(n+k)}{S(n+1)-S(n)}\nonumber\\
		=&\lim_{n\to \infty}\frac{Q(n+k+1)}{Q(n+1)}\nonumber\\
		=&1.
	\end{align}
\end{proof}

We are now in a position to prove Theorem \ref{thm-mainodd}.

\noindent{\it{Proof of Theorem \ref{thm-mainodd}}.}  Since $A_t(q)$ is a polynomial, we may assume that the degree of $A_t(q)$ equals $k$. Moreover, we can write $A_t(q)$ as follows:
\begin{equation}
	A_t(q)=\sum_{i=0}^{k} a_iq^i.
\end{equation}
By Corollary \ref{cor-main3-1}, we see that $\sum_{i=0}^{k} a_i<0$.

From the definition of ${A_t(q)}$ and ${B_t(q)}$, we find that
\begin{align}
	B_{t}(q)\left(\sum_{n=0}^{\infty}(b_{2,2t+1}(n)-b_{2,2t+2}(n))q^n\right)
	=&A_{t}(q)\sum_{n=0}^{\infty}S(n)q^n\nonumber\\
	=&\left(\sum_{n=0}^{k}a_nq^n\right)\cdot\sum_{n=0}^{\infty}S(n)q^n\nonumber\\
	=&\sum_{n=0}^{\infty}\left(\sum_{i=0}^{k}a_i S(n-i)\right)q^n.
\end{align}
Using Corollary \ref{cor-main3-1} and Lemma \ref{lem-main3-4}, when $n\to\infty$,
\[
\sum_{i=0}^{k}a_i S(n-i)\sim S(n) \left(\sum_{i=0}^{k}a_i\right)<0.
\]
Thus
\[
	B_{t}(q)\left(\sum_{n=0}^{\infty}(b_{2,2t+1}(n)-b_{2,2t+2}(n))q^n\right)
\]
has negative coefficients of $q^n$ for sufficiently large $n$. Moreover, since the coefficients  in $B_{t}(q)$ are all non-negative. We derive that $\sum_{n=0}^{\infty}(b_{2,2t+1}(n)-b_{2,2t+2}(n))q^n$ has infinitely many negative terms.

Hence  there are infinitely many $n$ such that $b_{2,k}(n)<b_{2,k+1}(n)$ for $k\ge 3$ and $k$ is odd.\qed

\section{Proof of Theorem \ref{thm-main21} and Theorem \ref{thm-main22}}\label{section-proofmain2}

This section is devoted to proving Theorem \ref{thm-main21} and Theorem \ref{thm-main22}. For Theorem \ref{thm-main21}, we first transform $\sum_{n=0}^{\infty}(b_{2,4}(n)-b_{2,5}(n))q^n$ as the summations of two summands  in Lemma \ref{thm-main21-lem1}. We then show that each summand has non-negative terms for $n\ge 25$. Thus after checking $b_{2,4}(n)\ge b_{2,5}(n)$ for $0\le n\le 24$ and $n\ne 5$, we arrive at Theorem \ref{thm-main21}. The proof of Theorem \ref{thm-main22} is similar. We first rewrite $\sum_{n=0}^{\infty}(b_{2,6}(n)-b_{2,7}(n))q^n$ as the summation of three summands. Next we prove that each summand has non-negative coefficients of $q^n$ for $n\ge 55$. Theorem \ref{thm-main22} follows after we check that $b_{2,6}(n)\ge b_{2,7}(n)$ for $0\le n\le 54$ and $n\ne 7$.

We first transform $\sum_{n=0}^{\infty}(b_{2,4}(n)-b_{2,5}(n))q^n$ as in the following lemma.

\begin{lem}\label{thm-main21-lem1}
	We have
	\begin{equation}\label{eq-b24n-b25nhuajianhou}
		\sum_{n=0}^{\infty}(b_{2,4}(n)-b_{2,5}(n))q^n=\frac{q^4(A(q)(1-q)^2+B(q)(1-q))}{(1-q^8)(1-q^{10})(q;q^2)_{\infty}},
	\end{equation}
	where
	\begin{align}
		A(q)=&2+q+3q^2+3q^3+4q^4+4q^5+6q^6+7q^7+6q^8+7q^9\nonumber\\
		&+5q^{10}+6q^{11}+4q^{12}+4q^{13}+3q^{14}+3q^{15}+q^{16},\label{eq-def-A}\\
		B(q)=&q^{18}+q^{20}.\label{eq-def-B}
	\end{align}
\end{lem}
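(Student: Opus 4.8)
The plan is to apply the Craig--Dawsey--Han formula \eqref{eq-ahq} with $i=4$ and $i=5$, subtract the two resulting generating functions, and simplify. Since $(q;q)_\infty=(q;q^2)_\infty(q^2;q^2)_\infty$, one has $(-q;q)_\infty=(q^2;q^2)_\infty/(q;q)_\infty=1/(q;q^2)_\infty$, so the infinite product in Theorem~\ref{thm-ahq} supplies precisely the factor $1/(q;q^2)_\infty$ appearing in \eqref{eq-b24n-b25nhuajianhou}; what remains is an identity between rational functions of $q$.

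First I would expand $\dfrac{\sum_{n\ge0}b_{2,4}(n)q^n}{(-q;q)_\infty}$ explicitly. For $i=4$ we have $\lceil i/2\rceil-1=1$ and $\lfloor i/2\rfloor-1=1$, so the first sum in \eqref{eq-ahq} runs over $j\in\{0,1\}$ and the second over $j\in\{0,1\}$; the outer $q^2$-binomials are $\binom{3}{0}_{q^2}=1$, $\binom{2}{1}_{q^2}=1+q^2$, $\binom{2}{0}_{q^2}=1$, $\binom{1}{1}_{q^2}=1$, the inner ones being $\binom{j}{k}_{q^2}$ with $j\le1$, and the denominators that occur are $1-q^2,1-q^4,1-q^6,1-q^8$. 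Next I would do the same for $i=5$: here $\lceil i/2\rceil-1=2$ and $\lfloor i/2\rfloor-1=1$, so the first sum runs over $j\in\{0,1,2\}$ and the second over $j\in\{0,1\}$; the outer $q^2$-binomials are $\binom{4}{0}_{q^2}=1$, $\binom{3}{1}_{q^2}=1+q^2+q^4$, $\binom{2}{2}_{q^2}=1$, $\binom{3}{0}_{q^2}=1$, $\binom{2}{1}_{q^2}=1+q^2$, and the additional denominator $1-q^{10}$ enters through the $j=0$ term of the first sum.

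Then I would subtract the two expansions and place everything over the common denominator $(1-q^2)(1-q^4)(1-q^6)(1-q^8)(1-q^{10})$, computing the numerator as a single polynomial. The essential point is that this numerator is divisible by $(1-q^2)(1-q^4)(1-q^6)$; in particular the contributions carrying a $1-q^6$ in the denominator must combine so that the cyclotomic factor $1+q^2+q^4$ (i.e.\ the primitive $3$rd and $6$th roots of unity) cancels in the difference, and similarly for $1-q^2$ and $1-q^4$. One verifies this by polynomial division, or by checking that the numerator vanishes with the right multiplicities at the relevant roots of unity. After this reduction the generating function has denominator $(1-q^8)(1-q^{10})(q;q^2)_\infty$, and dividing the reduced numerator by $q^4$ and writing it as $A(q)(1-q)^2+B(q)(1-q)$ --- taking $B(q)$ to absorb the two top-degree terms so that $(1-q)$ divides the remainder --- is a finite coefficient comparison that produces exactly the polynomials $A(q)$ and $B(q)$ of \eqref{eq-def-A} and \eqref{eq-def-B}.

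The sole obstacle is computational: there are fifteen rational summands in all, the common denominator has degree $30$, and the cleared polynomial identity has degree in the high thirties, so the bookkeeping must be carried out carefully (or checked with a computer algebra system). There is no conceptual difficulty --- the one structural phenomenon is the cancellation of the spurious factors $1-q^2$, $1-q^4$, $1-q^6$ in the difference $b_{2,4}-b_{2,5}$, which is forced once the numerator is seen to vanish at the appropriate roots of unity, and the precise form of $A$ and $B$ then drops out of matching finitely many coefficients.
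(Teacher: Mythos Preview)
Your approach is correct and essentially the same as the paper's: both apply Theorem~\ref{thm-ahq} for $i=4$ and $i=5$, use $(-q;q)_\infty=1/(q;q^2)_\infty$, subtract, simplify to the denominator $(1-q^8)(1-q^{10})(q;q^2)_\infty$, and then verify the polynomial identity $A(q)(1-q)^2+B(q)(1-q)$ by direct computation. The paper simply records the resulting degree-$21$ polynomial and checks \eqref{eq-AB} without spelling out the intermediate common denominator or the cancellation of $(1-q^2)(1-q^4)(1-q^6)$, but that is exactly the computation you describe.
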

\begin{proof}
Using Theorem \ref{thm-ahq}, we have
\begin{align}
	&\sum_{n=0}^{\infty}(b_{2,4}(n)-b_{2,5}(n))q^n\nonumber\\
	=&(-q;q)_{\infty}\left(\sum_{j=0}^{1}\binom{4-j-1}{j}_{q^2}B_1(4,j;q)+\sum_{j=0}^{1}\binom{4-j-2}{j}_{q^2}B_2(4,j;q)\right.\nonumber\\
	&\left.-\sum_{j=0}^{2}\binom{5-j-1}{j}_{q^2}B_1(5,j;q)-\sum_{j=0}^{1}\binom{5-j-2}{j}_{q^2}B_2(5,j;q)\right)\nonumber\\
	=&\frac{q^4}{(1-q^8)(1-q^{10})(q;q^2)_{\infty}}\left(2-3q+3q^2-2q^3+q^4-q^5+2q^6-q^7-2q^8+2q^9-3q^{10}\right.\nonumber\\
	&\left.+3q^{11}-3q^{12}+2q^{13}-q^{14}+q^{15}-2q^{16}+q^{17}+2q^{18}-q^{19}+q^{20}-q^{21}\right).
\end{align}
It is routine to check that
\begin{align}\label{eq-AB}
&2-3q+3q^2-2q^3+q^4-q^5+2q^6-q^7-2q^8+2q^9-3q^{10}+3q^{11}-3q^{12}+2q^{13}\nonumber\\	&-q^{14}+q^{15}-2q^{16}+q^{17}+2q^{18}-q^{19}+q^{20}-q^{21}\nonumber\\
=&A(q)(1-q)^2+B(q)(1-q),
\end{align}
where $A(q)$ and $B(q)$ are defined in \eqref{eq-def-A} and \eqref{eq-def-B}, respectively.
Thus we deduce \eqref{eq-b24n-b25nhuajianhou}.	
\end{proof}

We proceed to analyze the positivity of
\begin{equation}\label{pos-frac-1-q^2}
\frac{(1-q)^2}{(1-q^8)(1-q^{10})(q;q^2)_\infty}.\end{equation}
To this end, we need to recall the following result.
\begin{thm}[\cite{WolframQ}]\label{lem-pos-mathworld}
	For $n\ge 5$, the coefficients of $q^n$ in
	\[\frac{(1-q)^2}{(q;q^2)_\infty}\]
	are non-negative.
\end{thm}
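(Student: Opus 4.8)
The plan is to reduce the statement about $\dfrac{(1-q)^2}{(q;q^2)_\infty}$ to a purely combinatorial inequality and then prove that inequality by an injection between two sets of partitions into odd parts. First I would write
\[
\frac{(1-q)^2}{(q;q^2)_\infty}=(1-q)^2\sum_{n\ge 0}p_{o}(n)q^n,
\]
where $p_o(n)$ is the number of partitions of $n$ into odd parts. Expanding $(1-q)^2=1-2q+q^2$, the coefficient of $q^n$ equals $p_o(n)-2p_o(n-1)+p_o(n-2)$. Hence the theorem is equivalent to the assertion that
\[
p_o(n)+p_o(n-2)\ge 2\,p_o(n-1)\qquad\text{for all }n\ge 5,
\]
that is, $\bigl(p_o(n)-p_o(n-1)\bigr)\ge\bigl(p_o(n-1)-p_o(n-2)\bigr)$, a discrete convexity statement for $p_o$. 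Equivalently, setting $\Delta(n):=p_o(n)-p_o(n-1)$, I must show $\Delta(n)\ge\Delta(n-1)$ for $n\ge 5$, i.e. $\Delta$ is eventually nondecreasing.

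The cleanest route I can see is combinatorial. The difference $p_o(n)-p_o(n-1)$ counts (with sign) partitions into odd parts, and after one more differencing the second difference $p_o(n)-2p_o(n-1)+p_o(n-2)$ should be realized as the cardinality of an explicit set. One standard device: if $\mathcal O(n)$ denotes partitions of $n$ into odd parts, subtracting $p_o(n-1)$ corresponds to removing those partitions of $n$ that contain a part equal to $1$ (adjoining a $1$ is an injection $\mathcal O(n-1)\hookrightarrow\mathcal O(n)$), so $p_o(n)-p_o(n-1)$ counts partitions of $n$ into odd parts with \emph{no} part equal to $1$. Applying the same idea to the difference of these: I would build an injection from $\{$partitions of $n-1$ into odd parts with no part $1\}$ into $\{$partitions of $n$ into odd parts with no part $1\}$, for instance by sending the smallest part $m\ge 3$ to $m+1$ when $m+1$ is achievable as a combination, or more robustly by a "staircase" move that increases one part by $2$ and is valid once $n$ is large enough. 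The surplus not hit by the injection is exactly the nonnegative coefficient we want; checking the base cases $n=5,6,\dots$ by hand pins down the threshold $n\ge 5$.

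The main obstacle will be constructing the second injection so that it is genuinely well-defined on all of the relevant set rather than only generically: the map that "adds $2$ to the largest part" fails when doing so collides with an existing part (parity is preserved, so collisions do occur), and one must instead track multiplicities carefully — e.g. add $2$ to the largest part $\lambda_1$ unless $\lambda_1$ already has multiplicity forcing a merge, in which case reroute to the next available odd value. Verifying injectivity of this case-split, and identifying precisely which small $n$ are exceptional, is where the real work lies; everything else (the generating-function bookkeeping and the base-case check for $n<5$, where the coefficients of $q,q^2,q^3,q^4$ in $(1-q)^2/(q;q^2)_\infty=1-2q+2q^2-2q^3+3q^4-\cdots$ are indeed not all nonnegative) is routine. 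Alternatively, if the combinatorial injection proves unwieldy, one can fall back on the exact formula for $p_o(n)$ coming from its quasi-polynomial/partition-function asymptotics, but the finite-check-plus-injection approach is preferable because it yields the sharp bound $n\ge 5$ directly.
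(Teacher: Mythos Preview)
The paper does not prove this theorem at all; it is simply quoted from the cited reference and used as a black box in Lemmas~\ref{lem-8-10-q3q2} and~\ref{lem-main22-1}. So there is no ``paper's own proof'' to compare with --- any argument you supply is necessarily new relative to this paper.

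Your reduction is sound: writing $\dfrac{(1-q)^2}{(q;q^2)_\infty}=\dfrac{1-q}{(q^3;q^2)_\infty}$ shows that the coefficient of $q^n$ equals $a(n)-a(n-1)$, where $a(m)$ counts partitions of $m$ into odd parts $\ge 3$, and the task becomes proving $a(n)\ge a(n-1)$ for $n\ge 5$. (Incidentally, your stated expansion $1-2q+2q^2-2q^3+3q^4-\cdots$ is wrong; the actual coefficients are $1,-1,0,1,-1,1,\ldots$, which is consistent with the threshold $n\ge 5$.)

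The genuine gap is the injection itself. Both concrete maps you propose fail immediately: sending the smallest part $m$ to $m+1$ produces an \emph{even} part, so the image leaves the set of partitions into odd parts; and ``increasing one part by $2$'' changes the weight by $2$, not $1$, so it does not land in partitions of $n$. Any injection from odd-part partitions of $n-1$ to odd-part partitions of $n$ must change the weight by exactly $1$ while preserving the parity of every part, which forces multi-part moves (for example, $3+3\mapsto 7$, or $m\mapsto 3+(m-2)$ for $m\ge 5$). Such moves can be made to work, but arranging them into disjoint, exhaustive cases and verifying injectivity across cases is exactly the substantive content of the theorem --- and none of that is present in your proposal. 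As written, you have correctly identified the target inequality but not supplied a proof of it.
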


We now  state the positivity of \eqref{pos-frac-1-q^2} in the following lemma.

\begin{lem}\label{lem-8-10-q3q2}
  For $n\ge 5$, the coefficients of $q^n$ in
  \begin{equation}\label{eq-main2-1-q1-q81-q10}
  	\frac{(1-q)^2}{(1-q^8)(1-q^{10})(q;q^2)_\infty}
  	\end{equation}
are non-negative.
\end{lem}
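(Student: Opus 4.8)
The plan is to leverage Theorem \ref{lem-pos-mathworld}, which already guarantees that $\frac{(1-q)^2}{(q;q^2)_\infty}$ has non-negative coefficients for $n \ge 5$, and then simply note that multiplying by $\frac{1}{(1-q^8)(1-q^{10})}$ preserves this property in the relevant range. Concretely, write
\begin{equation}
\frac{1}{(1-q^8)(1-q^{10})}=\sum_{m=0}^{\infty}c_m q^m,
\end{equation}
where every $c_m\ge 0$ (indeed $c_m$ counts representations $m=8a+10b$ with $a,b\ge 0$, so $c_0=1$ and $c_m\ge 0$ for all $m$). Denote by $e_j$ the coefficient of $q^j$ in $\frac{(1-q)^2}{(q;q^2)_\infty}$, so that by Theorem \ref{lem-pos-mathworld} we have $e_j\ge 0$ for all $j\ge 5$. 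The coefficient of $q^n$ in \eqref{eq-main2-1-q1-q81-q10} is then
\begin{equation}
\sum_{m=0}^{n} c_m\, e_{n-m}.
\end{equation}

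First I would observe that every term with $n-m\ge 5$ is non-negative since $c_m\ge 0$ and $e_{n-m}\ge 0$. So the only potentially negative contributions come from the finitely many terms with $0\le n-m\le 4$, i.e. from $m\in\{n,n-1,n-2,n-3,n-4\}$. Among these, $c_m\,e_{n-m}$ can be negative only when the relevant $e_{n-m}$ is negative; a direct computation of the first few coefficients $e_0,e_1,e_2,e_3,e_4$ of $\frac{(1-q)^2}{(q;q^2)_\infty}$ shows which of these are negative (this is the base-case data already implicitly used in the statement of Theorem \ref{lem-pos-mathworld}). For $n\ge 5$ one has $m=n-j\ge 1$ for $j\le 4$, and since $c_0=1$ while $c_1=c_2=\cdots=c_7=0$ except $c_0$, the weight $c_m$ attached to each small index $n-m=j$ is itself typically zero or small; in fact for $n$ in the first few values after $5$ one checks the sum directly, and for larger $n$ the positive contribution from the single term $c_0 e_n$ (with $e_n\ge 0$, $n\ge 5$) together with the many other non-negative terms dominates.

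The cleanest way to finish, and the route I would actually take, is: verify by direct expansion that the coefficients of $q^n$ in \eqref{eq-main2-1-q1-q81-q10} are non-negative for $5\le n\le N_0$ for a modest explicit bound $N_0$ (large enough to absorb the degrees $8$ and $10$ of the new denominator factors plus the threshold $5$), and then for $n> N_0$ argue that the coefficient of $q^n$ equals (coefficient of $q^n$ in $\frac{(1-q)^2}{(q;q^2)_\infty(1-q^8)}$) plus (coefficient of $q^{n-10}$ in \eqref{eq-main2-1-q1-q81-q10}), and similarly peel off the $(1-q^8)$ factor, so that an easy induction on $n$ reduces everything to Theorem \ref{lem-pos-mathworld}. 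The only mild obstacle is bookkeeping the finitely many small-$n$ values; there is no analytic difficulty, since all the heavy lifting — the positivity of $\frac{(1-q)^2}{(q;q^2)_\infty}$ — is supplied by Theorem \ref{lem-pos-mathworld}.
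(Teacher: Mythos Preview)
Your approach is correct in outline but takes a different route from the paper. The paper computes a few more low-order terms of $\frac{(1-q)^2}{(q;q^2)_\infty}$, writing it as $-q+q^3-q^4+q^9+q^{12}+q^{14}+q^{21}+M(q)$ with $M(q)$ having non-negative coefficients, and then verifies the polynomial identity
\[
-q+q^3-q^4+q^9+q^{12}+q^{14}+q^{21}=(-q+q^3-q^4)(1-q^8)(1-q^{10})+q^{13}+q^{19}+q^{22},
\]
so that dividing by $(1-q^8)(1-q^{10})$ leaves the polynomial $-q+q^3-q^4$ (which has no terms in degree $\ge 5$) plus a manifestly non-negative series. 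This finishes the lemma in one stroke, with no induction and no asymptotic comparison. Your convolution/induction route is viable, but the hand-wave in your second paragraph---that $c_0 e_n$ together with the other non-negative terms ``dominates'' the negative pieces $-c_{n-1}-c_{n-4}$---is not yet a proof: those $c_m$ grow linearly in $n$, and you supply no lower bound on the positive side. The two-layer induction you sketch in the third paragraph is the correct repair and does go through (first for $\frac{(1-q)^2}{(1-q^8)(q;q^2)_\infty}$, then for the full expression), but to start it you will still need the specific numerical facts $e_9\ge 1$, $e_{12}\ge 1$, $e_{14}\ge 1$ at the base---exactly the data the paper exploits directly. So your method works, but the paper's explicit identity is simply the shorter path.
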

\begin{proof}
	 From Theorem \ref{lem-pos-mathworld} and by direct calculation, we find that
\begin{equation*}
	\frac{(1-q)^2}{(q;q^2)_{\infty}}=-q+q^3-q^4+q^9+q^{12}+q^{14}+q^{21}+M(q),
\end{equation*}
where $M(q)$ is a $q$-series with non-negative coefficients.
	Thus \eqref{eq-main2-1-q1-q81-q10} can be expanded as follows.
	\begin{align}\label{eq-main21-1421}
		&\frac{(1-q)^2}{(1-q^8)(1-q^{10})(q;q^2)_\infty}\nonumber\\
		=&\frac{1}{(1-q^8)(1-q^{10})}\left(-q+q^3-q^4+q^9+q^{12}+q^{14}+q^{21}+M(q)\right)\nonumber\\
		=&\frac{1}{(1-q^8)(1-q^{10})}\left((-q+q^3-q^4)(1-q^8)(1-q^{10})+q^{13}+q^{19}+q^{22}+M(q)\right)\nonumber\\
		=&-q+q^3-q^4+\frac{q^{13}+q^{19}+q^{22}+M(q)}{(1-q^8)(1-q^{10})}.
	\end{align}
Hence \eqref{eq-main21-1421} has non-negative coefficients of $q^n$ for $n\ge 5$. This completes the proof.
\end{proof}

We are now in a position to prove Theorem \ref{thm-main21}.

\noindent{\it Proof of Theorem \ref{thm-main21}}.
First,  it is easy to see that
\begin{equation}
	\frac{q^4(1-q)B(q)}{(1-q^8)(1-q^{10})(q;q^2)_{\infty}}=\frac{q^4B(q)}{(1-q^8)(1-q^{10})(q^3;q^2)_{\infty}},
\end{equation}
which obviously has non-negative coefficients.

Moreover, by Lemma \ref{lem-8-10-q3q2} we see that for $n\ge 25$, the coefficients of $q^n$ in
\[\frac{q^4A(q)(1-q)^2}{(1-q^8)(1-q^{10})(q;q^2)_\infty}\]
are non-negative.

Thus from Lemma \ref{thm-main21-lem1}, we find that $b_{2,4}(n)\ge b_{2,5}(n)$ for $n\ge 25$. In addition, it is easy to check that the coefficients of $q^n$ in \eqref{eq-b24n-b25nhuajianhou}  {are} non-negative for $0\le n\le 24$ and $n\ne 5$. Thus \eqref{eq-mainb24ngeb25n} holds for $n\ge 0$ and $n\ne 5$.
\qed

We proceed to prove Theorem \ref{thm-main22}. To this end, we first transform $\sum_{n=0}^{\infty}(b_{2,6}(n)-b_{2,7}(n))q^n$ as in the following lemma.
\begin{lem}\label{thm-main22-lem1}
	\begin{align}\label{eq-b26-b27q6}
		&\sum_{n=0}^{\infty}(b_{2,6}(n)-b_{2,7}(n))q^n\nonumber\\
		=&\frac{q^6C(q)(1-q)^2}{(1-q^{12})(1-q^{14})(q;q^2)_{\infty}}+\frac{q^{16}(1+q)(1-q)^2}{(1-q^{12})(q;q^2)_{\infty}}+\frac{q^{33}(1+q^3)}{(1-q^{12})(1-q^{14})(q^3;q^2)_{\infty}},
	\end{align}
	where
	\begin{align}\label{eq-main22cq}
		C(q)=&3+q+4q^2+2q^3+6q^4+3q^5+7q^6+5q^7+7q^8+6q^9+9q^{10}+9q^{11}+9q^{12}+9q^{13}\nonumber\\&+6q^{14}+9q^{15}+4q^{16}+7q^{17}+2q^{18}+6q^{19}+2q^{20}+4q^{21}+2q^{22}+3q^{23}+q^{28}+q^{30}+q^{32}.
	\end{align}
\end{lem}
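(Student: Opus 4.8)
The plan is to mimic exactly the proof of Lemma~\ref{thm-main21-lem1}, which handled the analogous identity for $b_{2,4}(n)-b_{2,5}(n)$. First I would apply Theorem~\ref{thm-ahq} with $i=6$ and $i=7$ to obtain
\[
\sum_{n=0}^{\infty}(b_{2,6}(n)-b_{2,7}(n))q^n
=(-q;q)_{\infty}\Bigl(\textstyle\sum_{j=0}^{2}\binom{5-j}{j}_{q^2}B_1(6,j;q)+\sum_{j=0}^{2}\binom{4-j}{j}_{q^2}B_2(6,j;q)
-\sum_{j=0}^{3}\binom{6-j}{j}_{q^2}B_1(7,j;q)-\sum_{j=0}^{2}\binom{5-j}{j}_{q^2}B_2(7,j;q)\Bigr).
\]
The denominators appearing in the $B_1,B_2$ terms for $i=6,7$ are of the shape $1-q^{2i+2k-4j}$ and $1-q^{2i+2k-4j-2}$, which for these ranges of $i,j,k$ produce the factors $1-q^{12}$ and $1-q^{14}$ (and their divisors), while $(-q;q)_{\infty}=1/(q;q^2)_{\infty}$ contributes the Euler factor. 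So after clearing denominators over the common multiple $(1-q^{12})(1-q^{14})(q;q^2)_{\infty}$, one gets $q^6$ times a Laurent polynomial $P(q)$ divided by that product; the $q^6$ comes from the minimal power of $q$ in the $B_1(6,\cdot)$ terms.

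Next I would carry out the (purely mechanical) task of computing $P(q)$ explicitly as an honest polynomial — this is where almost all the labor sits, and it is the main obstacle: one must expand the $q$-binomial coefficients $\binom{m}{j}_{q^2}$ for the relevant small $m,j$, combine the four double sums, and simplify. Having $P(q)$ in hand, the final — and genuinely content-free — step is to exhibit the decomposition
\[
P(q)=C(q)(1-q)^2+q^{10}(1+q)(1-q)^2+q^{27}(1+q^3)\cdot\frac{(1-q^{14})}{1}\cdot\frac{1}{1},
\]
more precisely to split $P(q)/\bigl[(1-q^{12})(1-q^{14})\bigr]$ so that the three summands on the right-hand side of \eqref{eq-b26-b27q6} emerge: the first term keeps the full denominator, the second loses the $(1-q^{14})$ factor (because that piece of $P(q)$ is divisible by $1-q^{14}$), and the third loses the $(1-q)$ factor from $(q;q^2)_{\infty}$, turning it into $(q^3;q^2)_{\infty}$, which is exactly the bookkeeping needed later to read off positivity. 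Concretely I would verify the claimed $C(q)$ in \eqref{eq-main22cq} by checking the polynomial identity
\[
P(q)=C(q)(1-q)^2(1-q^{14})+q^{10}(1+q)(1-q)^2(1-q^{14})+q^{27}(1+q^3)(1-q),
\]
which is a finite coefficient comparison up to degree roughly $50$; each side is a concrete polynomial, so this is a routine (if tedious) check, best delegated to a computer-algebra confirmation. No deeper idea is required beyond the existence of such a decomposition with nonnegative $C(q)$, which is what makes the subsequent positivity argument for Theorem~\ref{thm-main22} go through.
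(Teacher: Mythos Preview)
Your plan is exactly the paper's proof: apply Theorem~\ref{thm-ahq} for $i=6,7$, clear denominators over $(1-q^{12})(1-q^{14})$ times the Euler factor to obtain $q^6$ times an explicit polynomial, and then verify the three-term decomposition by a finite coefficient check. The paper writes the intermediate step over $(q^3;q^2)_\infty$ rather than $(q;q^2)_\infty$ and records the identity as $P(q)=C(q)(1-q)+q^{10}(1-q^{14})(1-q^2)+q^{27}+q^{30}$, which is your intended identity divided through by $1-q$.

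One bookkeeping slip to fix: in your final displayed polynomial identity the term $C(q)(1-q)^2$ must \emph{not} carry the extra factor $(1-q^{14})$. Your own verbal description (``the first term keeps the full denominator, the second loses the $(1-q^{14})$ factor'') is correct, and it forces the first piece of $P(q)$ to be just $C(q)(1-q)^2$; with the spurious $(1-q^{14})$ removed both sides have degree $34$ rather than $48$, and the comparison goes through.
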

\begin{proof}
	Using Theorem \ref{thm-ahq}, we have
\begin{align}
	&\sum_{n=0}^{\infty}(b_{2,6}(n)-b_{2,7}(n))q^n\nonumber\\
	=&(-q;q)_{\infty}\left(\sum_{j=0}^{2}\binom{6-j-1}{j}_{q^2}B_1(6,j;q)+\sum_{j=0}^{2}\binom{6-j-2}{j}_{q^2}B_2(6,j;q)\right.\nonumber\\
	&\left.-\sum_{j=0}^{3}\binom{7-j-1}{j}_{q^2}B_1(7,j;q)-\sum_{j=0}^{2}\binom{7-j-2}{j}_{q^2}B_2(7,j;q)\right)\nonumber\\
	=&\frac{q^6}{(1-q^{12})(1-q^{14})(q^3;q^2)_{\infty}}\left(3-2q+3q^2-2q^3+4q^4-3q^5+4q^6-2q^7\right.\nonumber\\
	&\left.+2q^8-q^9+4q^{10}-q^{12}-3q^{14}+3q^{15}-5q^{16}+3q^{17}-5q^{18}+4q^{19}-4q^{20}\right.\nonumber\\
	&\left.+2q^{21}-2q^{22}+q^{23}-4q^{24}+q^{26}+q^{27}+q^{28}-q^{29}+2q^{30}-q^{31}+q^{32}-q^{33}\right)\label{eq-b26n-b27nhuajianhou}.
\end{align}

One can easily verify that
	\begin{small}
		\begin{align}\label{eq-main22-3-2q}	&3-2q+3q^2-2q^3+4q^4-3q^5+4q^6-2q^7+2q^8-q^9+4q^{10}-q^{12}-3q^{14}+3q^{15}-5q^{16}+3q^{17}\nonumber\\
		&-5q^{18}+4q^{19}-4q^{20}+2q^{21}-2q^{22}+q^{23}-4q^{24}+q^{26}+q^{27}+q^{28}-q^{29}+2q^{30}-q^{31}+q^{32}-q^{33}\nonumber\\
		=&C(q)(1-q)+q^{10}(1-q^{14})(1-q^2)+q^{27}+q^{30},
		\end{align}
	\end{small}
	where $C(q)$ is defined as \eqref{eq-main22cq}. Plugging \eqref{eq-main22-3-2q} into \eqref{eq-b26n-b27nhuajianhou}, we deduce \eqref{eq-b26-b27q6}.
	\end{proof}
Similar to the proof of Theorem \ref{thm-main21}, we next prove that
\[\frac{(1-q)^2}{(1-q^{12})(1-q^{14})(q;q^2)_{\infty}}\] has non-negative coefficients of $q^n$ when $n\ge 17$ in the following lemma.

\begin{lem}\label{lem-main22-1}
	For $n\ge 17$, the coefficients of  $q^n$ in
	\begin{equation}\label{eq-main22-1-q1214}
		\frac{(1-q)^2}{(1-q^{12})(1-q^{14})(q;q^2)_{\infty}}\quad \text{and}\quad \frac{(1-q)^2}{(1-q^{12})(q;q^2)_{\infty}}
	\end{equation}
	are both non-negative.
\end{lem}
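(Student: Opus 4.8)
The plan is to mimic the argument used in Lemma \ref{lem-8-10-q3q2}, reducing both series to the single known building block $\frac{(1-q)^2}{(q;q^2)_\infty}$ from Theorem \ref{lem-pos-mathworld}. First I would recall from that theorem (together with a short explicit computation of the low-order terms) that
\[
\frac{(1-q)^2}{(q;q^2)_\infty}=-q+q^3-q^4+q^9+q^{12}+q^{14}+q^{21}+M(q),
\]
where $M(q)$ has non-negative coefficients; this is exactly the expansion already invoked in the previous lemma, so it may be quoted. The key idea is that the "bad" terms $-q+q^3-q^4$ are precisely the negative part, and dividing by $\frac{1}{(1-q^{12})(1-q^{14})}$ (resp.\ $\frac{1}{1-q^{12}}$) only pushes these negative contributions to higher degrees in a controlled way.

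For the first series, I would write
\[
\frac{(1-q)^2}{(1-q^{12})(1-q^{14})(q;q^2)_\infty}
=\frac{1}{(1-q^{12})(1-q^{14})}\Bigl((-q+q^3-q^4)(1-q^{12})(1-q^{14})+R(q)\Bigr),
\]
where $R(q)=q^9+q^{12}+q^{14}+q^{21}+M(q)-(-q+q^3-q^4)\bigl((1-q^{12})(1-q^{14})-1\bigr)$. Expanding $(-q+q^3-q^4)\bigl(-q^{12}-q^{14}+q^{26}\bigr)$ gives only monomials of degree $\ge 13$ with bounded coefficients, so I need to check that after adding $q^9+q^{12}+q^{14}+q^{21}$ and $M(q)$ the resulting $R(q)$ has all coefficients non-negative in degrees $\ge 5$ (the degrees below that coming from the $-q+q^3-q^4$ part, which is cancelled by the $(1-q^{12})(1-q^{14})$ factor). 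Then
\[
\frac{(1-q)^2}{(1-q^{12})(1-q^{14})(q;q^2)_\infty}=-q+q^3-q^4+\frac{R(q)}{(1-q^{12})(1-q^{14})},
\]
and since $\frac{1}{(1-q^{12})(1-q^{14})}$ has non-negative coefficients, the right side has non-negative coefficients in every degree $n\ge 5$ — in particular for $n\ge 17$. The second series is handled identically but more simply, writing
\[
\frac{(1-q)^2}{(1-q^{12})(q;q^2)_\infty}=-q+q^3-q^4+\frac{q^9+q^{12}+q^{14}+q^{21}+M(q)-(-q+q^3-q^4)(-q^{12})}{1-q^{12}},
\]
and again the numerator has non-negative coefficients in degrees $\ge 5$, so the whole expression does too.

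The main obstacle I anticipate is purely bookkeeping: one must confirm that the finitely many monomials produced by multiplying $-q+q^3-q^4$ against $(1-q^{12})(1-q^{14})-1=-q^{12}-q^{14}+q^{26}$ do not create a negative coefficient in $R(q)$ that is not already compensated by $q^9+q^{12}+q^{14}+q^{21}$ or by $M(q)$. Since $M(q)$ is only guaranteed non-negative (we do not have a closed form for its low-degree coefficients beyond degree $21$), the cleanest route is to verify directly that the first, say, $30$ coefficients of $\frac{(1-q)^2}{(q;q^2)_\infty}$ agree with the stated expansion, confirm $R(q)$ has no negative coefficient up to that degree by hand, and note that beyond degree $26$ the perturbation term vanishes so $R(q)$ inherits non-negativity from $M(q)$ directly. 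This is a routine but necessary finite check, and it is the only place where care is required.
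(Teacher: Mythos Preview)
Your strategy is exactly the paper's strategy, but the specific polynomial you pull out is too short, and the ``bookkeeping'' check you defer actually fails. With your choice $-q+q^3-q^4$ one computes
\[
(-q+q^3-q^4)\bigl((1-q^{12})(1-q^{14})-1\bigr)=q^{13}+q^{16}-q^{17}+q^{18}-q^{27}+q^{29}-q^{30},
\]
so your remainder is
\[
R(q)=q^9+q^{12}+q^{14}+q^{21}+M(q)-q^{13}-q^{16}+q^{17}-q^{18}+q^{27}-q^{29}+q^{30}.
\]
But the coefficient of $q^{13}$ and of $q^{16}$ in $\frac{(1-q)^2}{(q;q^2)_\infty}$ is $0$, hence $M(q)$ contributes nothing there, and $R(q)$ has coefficient $-1$ at both degrees $13$ and $16$. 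Thus $R(q)/\bigl((1-q^{12})(1-q^{14})\bigr)$ is \emph{not} termwise non-negative, and its negative contributions recur at degrees $25,27,28,30,\dots$, well inside the range $n\ge 17$ you need. The same failure occurs verbatim in your treatment of the second series, since $-(-q+q^3-q^4)(-q^{12})=-q^{13}+q^{15}-q^{16}$ again forces $-1$ at degrees $13$ and $16$.

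The missing idea is that the negative terms $-q$ and $-q^4$, once divided by $1-q^{12}$, produce fresh negative ``echoes'' at degrees $13$ and $16$, and these must themselves be absorbed into the polynomial part. The paper handles this by choosing the longer polynomial $-q-q^4-q^{13}-q^{16}$ and, correspondingly, extracting a different finite set of positive terms $q^{15},q^{18},q^{25},q^{27},q^{28},q^{30}$ from the series so that the identity
\[
-q-q^4+q^{15}+q^{18}+q^{25}+q^{27}+q^{28}+q^{30}=(-q-q^4-q^{13}-q^{16})(1-q^{12})(1-q^{14})+q^{27}+q^{30}+q^{39}+q^{42}
\]
holds with a non-negative remainder. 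Your argument can be repaired by iterating once: after discovering $-q^{13}-q^{16}$ in $R(q)$, move them to the polynomial side and repeat; this terminates and recovers essentially the paper's decomposition. As written, though, the proof has a gap precisely at the step you flagged as routine.
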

\begin{proof}
	Using Theorem \ref{lem-pos-mathworld} and direct calculation, we find that
	\begin{align}\label{eq-main21111}
		\frac{(1-q)^2}{(q;q^2)_{\infty}}
		=-q-q^4+q^{15}+q^{18}+q^{25}+q^{27}+q^{28}+q^{30}+N(q),
		\end{align}
		where $N(q)$ is a $q$-series with non-negative coefficients.
		
	On the one hand, it is easy to check that
		\begin{equation}\label{equ-temp-45}
			-q-q^4+q^{15}+q^{18}+q^{25}+q^{27}+q^{28}+q^{30}=(-q-q^4-q^{13}-q^{16})(1-q^{12})(1-q^{14})+q^{27}+q^{30}+q^{39}+q^{42}.
		\end{equation}
		From \eqref{eq-main21111} and \eqref{equ-temp-45}, we find that
		\begin{equation}\label{eq-1-q12-1-q14}
			\frac{(1-q)^2}{(1-q^{12})(1-q^{14})(q;q^2)_{\infty}}=-q-q^4-q^{13}-q^{16}+\frac{N(q)+q^{27}+q^{30}+q^{39}+q^{42}}{(1-q^{12})(1-q^{14})},
		\end{equation}
		which clearly has non-negative coefficients of $q^n$ for $n\ge 17$.
		
		On the other hand, multiplying $(1-q^{14})$ on both sides of \eqref{eq-1-q12-1-q14}, we have
		\begin{equation}
			\frac{(1-q)^2}{(1-q^{12})(q;q^2)_{\infty}}=(-q-q^4-q^{13}-q^{16})(1-q^{14})+\frac{N(q)+q^{27}+q^{30}+q^{39}+q^{42}}{(1-q^{12})},
		\end{equation}
		which also has non-negative coefficients of $q^n$ for $n\ge 17$. This completes the proof.
\end{proof}

We are now in a position to give a proof of Theorem \ref{thm-main22}.

\noindent{\it{Proof of Theorem \ref{thm-main22}}.} By Lemma \ref{thm-main22-lem1}, it is sufficient to analyze the positivity of the three summands in \eqref{eq-b26-b27q6}.
Clearly, the last summand in \eqref{eq-b26-b27q6} has non-negative coefficients. We first study the positivity of the first summand in \eqref{eq-b26-b27q6}. To be specific, we will show that
\[\frac{q^6C(q)(1-q)^2}{(1-q^{12})(1-q^{14})(q;q^2)_{\infty}}\]
has non-negative coefficients of $q^n$ for $n\ge 55$. In fact, note that $C(q)$ is a polynomial of $q$ with positive coefficients and degree $32$. Moreover  Lemma \ref{lem-main22-1} yields that
\[\frac{(1-q)^2}{(1-q^{12})(1-q^{14})(q;q^2)_{\infty}}\]
has non-negative coefficients of $q^n$ for $n\ge 17$. From the above analysis, we derive the positivity of the first summand.

We then deduce the positivity of the second summand. From Lemma \ref{lem-main22-1} we see that
\[\frac{(1-q)^2}{(1-q^{12})(q;q^2)_{\infty}}\]
has non-negative coefficients of $q^n $ for $n\ge 17$. Moreover, $q^{16}(1+q)$ is a polynomial with degree $17$ and non-negative coefficients. Thus  we deduce that the coefficients of $q^n$ in the second summand in \eqref{eq-b26-b27q6} {are} non-negative for $n\ge 34$.

From the above analysis, we derive that  the coefficients of $q^n$ in $\sum_{n=0}^{\infty}(b_{2,6}(n)-b_{2,7}(n))q^n$ are non-negative for $n\ge 55$. Thus after checking $b_{2,6}(n)\ge b_{2,7}(n)$ when $1\le n\le 54$ and $n\ne 7$, we complete the proof.
\qed

\noindent{\bf Acknowledgments.}   This work was supported by the National Science Foundation of China grants 12371336 and 12171358.

\end{document}